\definecolor{violet}{rgb}{0.0,0.2,0.7}
\definecolor{rouge2}{rgb}{0.8,0.0,0.2}
 \theoremstyle{plain}    
 \newtheorem{thm}{Theorem}[section]
\theoremstyle{plain} 
\theoremstyle{thm*} 
 \numberwithin{equation}{section} 
 \numberwithin{figure}{section} 
 \newtheorem{cor}[thm]{Corollary} 
 \theoremstyle{plain}    
 \newtheorem{prop}[thm]{Proposition} 
 \theoremstyle{plain}    
  \theoremstyle{thm*}    
  \newtheorem*{question*}{Question}
 \newtheorem{lem}[thm]{Lemma} 
 \theoremstyle{remark}
 \theoremstyle{remark}
 \newtheorem{rem}[thm]{Remark}
 \theoremstyle{definition}
\newtheorem{exa}[thm]{Example}
\theoremstyle{plain}  
\newtheorem{set}[thm]{Setup}
\theoremstyle{plain}
\theoremstyle{definition}
\newtheorem{defi}[thm]{Definition}
\newtheorem{notation}[thm]{Notation}
\newtheorem{conj}[thm]{Conjecture}
\newcommand{\C}{{\mathbb{C}}}
\newcommand{\N}{{\mathbb{N}}}
\newcommand{\PP}{{\mathbb{P}}}
\newcommand{\Q}{{\mathbb{Q}}}
\newcommand{\R}{{\mathbb{R}}}
\newcommand{\Z}{{\mathbb{Z}}}
\newcommand{\cC}{{\mathcal{C}}}
\newcommand{\cD}{{\mathcal{D}}}
\newcommand{\cF}{{\mathcal{F}}}
\newcommand{\cG}{{\mathcal{G}}}
\newcommand{\cO}{{\mathcal{O}}}
\newcommand{\cX}{{\mathcal{X}}}
\newcommand{\cY}{{\mathcal{Y}}}
\newcommand{\cZ}{{\mathcal{Z}}}
\def\1{\bold{1}}
\newcommand{\bD}{{\mathbb{D}}}
\newcommand{\om}{\omega}
\newcommand{\Xr}{X_{\rm reg}}
\newcommand{\Xs}{X_{\rm sing}}
\newcommand{\wX}{\widetilde X}
\newcommand{\Sk}{S^{[k]}}
\newcommand{\SkF}{S^{[k]}\cF}
\newcommand{\SiF}{S^{[i]}\cF}
\DeclareMathOperator{\Ric}{Ric}
\renewcommand{\ge}{\geqslant}
\renewcommand{\le}{\leqslant}
\newcommand{\wt}{\widetilde}
\newcommand{\Aut}{\operatorname{Aut}}
\newcommand{\Alb}{\operatorname{Alb}}
\newcommand{\GL}{\operatorname{GL}}
\newcommand{\U}{\operatorname{U}}
\newcommand{\SU}{\operatorname{SU}}
\newcommand{\Sp}{\operatorname{Sp}}
\newcommand{\GLn}{\operatorname{GL}(n,\mathbb C)}
\newcommand{\Un}{\operatorname{U}(n)}
\newcommand{\PSH}{\operatorname{PSH}}
\newcommand{\PH}{\operatorname{PH}}
\newcommand{\codim}{\operatorname{codim}}
\newcommand{\Hol}{\operatorname{Hol}}
\newcommand{\Holo}{\operatorname{Hol}^\circ}
\newcommand{\Dlt}{\operatorname{Def}^{\rm lt}}
\title{Beauville-Bogomolov decomposition for klt varieties}
\author{Henri Guenancia}
\address{Univ. Bordeaux, CNRS, Bordeaux INP, IMB, UMR 5251, F-33400 Talence, France}\email{henri.guenancia@math.cnrs.fr}
\date{}
\begin{document}

\begin{abstract}
These lecture notes present a mostly self-contained proof of the singular version of Beauville-Bogomolov decomposition theorem for compact Kähler varieties with log terminal singularities and zero first Chern class.
\end{abstract}

\maketitle
\tableofcontents

\section{Introduction}

\subsection{The decomposition theorem}
Let $X$ be a compact Kähler manifold such that $c_1(X)=0\in H^2(X,\C)$. The celebrated Beauville-Bogomolov decomposition theorem \cite{Bea83,Bog74} states that up to a finite étale cover, $X$ splits as a product of a complex torus and simply connected manifolds of a very particular type. 

\begin{thm}[Beauville-Bogomolov decomposition theorem]
Let $X$ be a compact Kähler manifold such that $c_1(X)=0\in H^2(X,\C)$. There exists a finite étale cover $f:X'\to X$ and a product decomposition
\[X'=T\times \prod_{i\in I} Y_i \times \prod_{j\in J} Z_j\]
where $T$ is a complex torus and for any index $i\in I$ (resp. $j\in J$), the manifold $Y_i$ (resp. $Z_j$) is irreducible Calabi--Yau (resp. irreducible holomorphic symplectic). 
\end{thm}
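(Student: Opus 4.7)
The plan is to combine Yau's solution of the Calabi conjecture with Riemannian tools (Cheeger--Gromoll splitting, de Rham decomposition, Berger's holonomy classification) and geometric group theory (Bieberbach's theorem). Since $X$ is compact Kähler with $c_1(X) = 0$, Yau's theorem provides a Ricci-flat Kähler metric $\omega$ on $X$. Its pullback $\wt\omega$ to the universal cover $\wt X$ is a complete, simply connected, Ricci-flat Kähler metric on which $\Gamma := \pi_1(X)$ acts freely, properly discontinuously, and cocompactly by holomorphic isometries.

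Next, I would apply the Cheeger--Gromoll splitting theorem to $(\wt X, \wt\omega)$: nonnegative Ricci curvature together with the existence of a line (produced by the cocompact $\Gamma$-action on $\wt X$) yields a canonical isometric splitting
\[\wt X = \C^k \times \wt X_0,\]
where $\wt X_0$ is simply connected, Ricci-flat Kähler, and contains no line; the splitting is automatically holomorphic because the flat distribution is parallel and therefore preserved by the complex structure. I would then use the de Rham decomposition to write $\wt X_0$ as a finite product of irreducible Kähler factors, and Berger's classification of Riemannian holonomy to identify each factor: a simply connected, irreducible, Ricci-flat Kähler manifold has holonomy either $\operatorname{SU}(n_i)$ or $\operatorname{Sp}(m_j)$, corresponding respectively to irreducible Calabi--Yau manifolds $Y_i$ and irreducible holomorphic symplectic manifolds $Z_j$.

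It remains to descend the splitting to a finite étale cover of $X$. The Cheeger--Gromoll/de Rham decomposition is canonical, so $\Gamma$ respects the entire product structure. Set $N := \prod_i Y_i \times \prod_j Z_j$. Since $N$ is compact Ricci-flat Kähler with no flat factor, a Bochner argument shows every Killing vector field on $N$ is parallel, hence trivial, so $\operatorname{Isom}(N)$ is a discrete compact Lie group, thus finite. The induced map $\Gamma \to \operatorname{Isom}(N)$ has finite image, and after replacing $\Gamma$ by its finite-index kernel $\Gamma_1$, the group $\Gamma_1$ acts on $\C^k$ as a crystallographic group. Bieberbach's first theorem then furnishes a finite-index subgroup $\Gamma_0 \le \Gamma_1$ acting on $\C^k$ purely by translations, and the quotient
\[X' = \wt X / \Gamma_0 = (\C^k/\Lambda) \times \prod_{i \in I} Y_i \times \prod_{j \in J} Z_j\]
is the desired finite étale cover, splitting as a complex torus times the irreducible factors.

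The main obstacle — although by now classical in the smooth case — is establishing the canonicity of the Cheeger--Gromoll/de Rham splitting and leveraging it to reduce the descent step to finitely many combinatorial choices. The delicate point is that $\Gamma$ a priori acts only by isometries of the full Riemannian product, and one must upgrade this on a finite-index subgroup to the product of a lattice action by translations on $\C^k$ with the trivial action on $N$; this rests on the finiteness of $\operatorname{Isom}(N)$ together with Bieberbach's theorem. In the body of the notes, each of these ingredients will need a singular counterpart in order to extend the theorem to the klt setting.
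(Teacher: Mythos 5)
Your proposal follows the same skeleton the paper uses in its ``Skeleton of the proof'' paragraph for the smooth case: Yau's theorem to get a Ricci-flat metric, Cheeger--Gromoll and de~Rham on the universal cover, Berger--Simons to identify the irreducible holonomies, Bochner to read off the geometry of the factors, and Bieberbach to descend to a finite étale cover. The only cosmetic difference is the order in which you invoke Cheeger--Gromoll and de~Rham, and like the paper you leave to Beauville's original argument the verification that after passing to a finite-index subgroup the fundamental group stops permuting the de~Rham factors and acts trivially on the non-flat part.
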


The terminology of irreducible Calabi--Yau (ICY for short) and irreducible holomorphic symplectic (IHS for short) is not necessarily standard in the literature. Here, we say that a compact Kähler manifold $X$ of dimension $n$ such that $K_X\sim \cO_X$ and $\pi_1(X)=\{1\}$ is
\begin{enumerate}[label=$\bullet$]
\item \emph{irreducible Calabi--Yau} if $n\ge 3$ and for any integer $0<p<n$, one has $H^0(X,\Omega_X^p)=\{0\}$. 
\item \emph{irreducible holomorphic symplectic} if there exists a holomorphic symplectic form $\sigma \in H^0(X,\Omega_X^2)$ such that we have an isomorphism of algebras $\oplus_{p=0}^n H^0(X,\Omega_X^p)=\C[\sigma]$. 
\end{enumerate}

\subsection{Skeleton of the proof}

The starting point of the proof is the resolution of the Calabi conjecture by Yau \cite{Yau78} which states that for any compact Kähler manifold $X$ with $c_1(X)=0$, there exists a Kähler metric $\omega$ such that $\Ric \om=0$. 

As an immediate corollary of Bochner formula, any global holomorphic tensor $\tau \in H^0(X,T_X^{\otimes p}\otimes \Omega_X^{\otimes q})$ is parallel with respect to $\omega$. So, given $x\in X$, there is a one-to-one correspondence between global holomorphic tensors of a given type $(p,q)$ and vectors in $T_{X,x}^{\otimes p}\otimes \Omega_{X,x}^{\otimes q}$ that are invariant under the holonomy group of $(X,\omega)$. This is often referred to as the Bochner principle. 

Then, the de Rham decomposition theorem shows that the universal cover $\pi:\wX\to X$ splits isometrically as $(\wX,\pi^*\omega)=(\C^k, \omega_{\C^k})\times \prod_{i\in I} (X_i,\omega_i)$ where $(X_i,\om_i)$ is a simply connected, Kähler Ricci flat manifold  whose holonomy $G_i$ is irreducible. 

Thanks to Berger-Simons classification for holonomy groups, one must have $G_i=\SU(\C^{n_i})$ or $G_i=\Sp(\C^{n_i})$ where $n_i=\dim_{\C}X_i$.

Next, Cheeger-Gromoll splitting theorem \cite{CG71} implies that the manifolds $X_i$ are compact. Therefore, Bochner principle applies to them and the a priori knowledge of their holonomy shows that they are either ICY or IHS manifolds, depending on their holonomy type. 

Then, one can show (cf \cite{Bea83}) that there exists a finite index subgroup $H$ of $\pi_1(X)$ which preserves the product decomposition of $\wX$. The final input is to appeal to Bieberbach theorem which guarantees that there is a finite index subgroup of $H$ whose action on $\mathbb C^{k}$ is by a lattice of translations. Putting everything together yields the decomposition theorem. 

\subsection{The singular version}
When attempting to classify smooth projective varieties or compact Kähler manifolds up to birational or bimeromorphic equivalence, a key object is the Iitaka fibration. It provides for any such variety (say not of general type) a bimeromorphic model which is the total space of a nontrivial algebraic fiber space whose fibers have vanishing Kodaira dimension, highlighting the central role of the latter varieties. 

One of the main conjectures from the analytic version of the Minimal Model Program, combining the existence of a minimal model and the abundance conjecture, can be stated as follows

\begin{conj}
Let $X$ be a compact Kähler manifold with $\kappa(X)=0$. There exists a bimeromorphic map
\[X\dashrightarrow X_{\rm min}\]
to a normal compact Kähler variety $X_{\rm min}$ with terminal singularities such that $K_{X_{\rm min}}$ is torsion. 
\end{conj}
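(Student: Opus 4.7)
This is a central open conjecture of the minimal model program for Kähler varieties, so rather than a complete argument I will describe the two-step strategy one would follow and where the main obstructions lie.

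\textbf{Step 1 (Running the Kähler MMP).} Starting from $X$, the plan is to run the minimal model program in the Kähler category, producing a finite sequence of divisorial contractions and flips
\[X = X_0 \dashrightarrow X_1 \dashrightarrow \cdots \dashrightarrow X_N =: X_{\rm min}\]
in which each step contracts a $K$-negative extremal ray. Since the Kodaira dimension is a bimeromorphic invariant, $\kappa(X_i)=0$ throughout, and in particular $K_{X_i}$ is pseudo-effective, so the program cannot terminate with a Mori fibre space. Consequently it should end with a normal compact Kähler variety $X_{\rm min}$ with terminal singularities on which $K_{X_{\rm min}}$ is nef. In the projective setting, Birkar-Cascini-Hacon-McKernan give the analogous result (and with additional work, one can arrange terminal singularities starting from a smooth variety); in the Kähler category this is known in dimension three by Höring-Peternell and Campana-Höring-Peternell, but is wide open in higher dimension.

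\textbf{Step 2 (Abundance and $\kappa = 0$).} Once $X_{\rm min}$ with $K_{X_{\rm min}}$ nef has been produced, I would invoke the abundance conjecture, which predicts that the nef line bundle $K_{X_{\rm min}}$ is semi-ample. For $m$ sufficiently divisible the Iitaka fibration associated to $|mK_{X_{\rm min}}|$ has base of dimension $\kappa(X_{\rm min})=\kappa(X)=0$, so the base is a point and $m K_{X_{\rm min}} \sim \cO_{X_{\rm min}}$; this is precisely the torsion statement in the conjecture. Abundance is known in dimension $\le 3$ (Miyaoka-Kawamata-Kollár), and various partial results exist in higher dimension, but the general case is one of the deepest open problems in birational geometry.

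\textbf{Main obstacle.} Both steps are unconditionally open in arbitrary dimension. The construction of Kähler flips, their termination, and the transcendental analogues of the cone and contraction theorems are substantial analytic difficulties; however, I would regard abundance as the deeper of the two obstacles, since it is open even for smooth projective varieties in dimension $\ge 4$. The relevance of the conjecture to the present notes is that, conditionally on it, every compact Kähler manifold with $\kappa=0$ admits a minimal model with torsion canonical class, to which the klt Beauville-Bogomolov decomposition of the paper directly applies, giving a structural description of the entire $\kappa=0$ range up to bimeromorphism and finite étale cover.
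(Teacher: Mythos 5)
You have correctly recognized that the statement is labelled as a \emph{conjecture} in the paper, not a theorem, and the paper offers no proof: in the sentence immediately preceding the statement the author explicitly describes it as ``combining the existence of a minimal model and the abundance conjecture,'' which is precisely the two-step decomposition you give. Your outline (run a Kähler MMP to reach a terminal minimal model, then invoke abundance to conclude that a nef $K_{X_{\rm min}}$ with $\kappa=0$ is torsion) therefore matches the paper's framing exactly, and your assessment of the status --- known in low dimension by Höring--Peternell, Campana--Höring--Peternell, Miyaoka--Kawamata--Kollár, open in general, with the Kähler category adding further analytic difficulties --- is accurate. Since there is no proof in the paper to compare against, there is nothing further to say; declining to ``prove'' an open conjecture and instead mapping out the expected strategy and its obstructions was the right call.
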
  

The conjecture above is still widely open even for projective manifolds. However, it provides a legitimate motivation to try to understand the structure of terminal varieties with torsion canonical bundle in the spirit of the decomposition theorem above, and one of the first attempts date back to Peternell \cite{Pet94} in the early 90s. This has been achieved (in a more general setting) thanks to the recent series of articles settling the projective case first \cite{Dru16,GGK,HP} and then the Kähler case \cite{BGL}.

\begin{thm}
\label{thm BB klt}
Let $X$ be a normal compact Kähler variety with klt singularities satisfying $c_1(X)=0\in H^2(X,\C)$. There exists a finite quasi-étale cover $f:X'\to X$ and a product decomposition
\[X'=T\times \prod_{i\in I} Y_i \times \prod_{j\in J} Z_j\]
where $T$ is a complex torus and for any index $i\in I$ (resp. $j\in J$), the variety $Y_i$ (resp. $Z_j$) is irreducible Calabi--Yau (resp. irreducible holomorphic symplectic). 
\end{thm}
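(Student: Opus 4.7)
The overall plan is to follow the skeleton of the smooth Beauville--Bogomolov proof step by step, replacing each ingredient by its singular counterpart on the pair $(X, X_{\rm reg})$. The starting input is the singular Calabi--Yau theorem (Eyssidieux--Guedj--Zeriahi in the projective case, with the extension to the compact Kähler klt setting), which provides a closed positive $(1,1)$-current $\om$ with locally bounded potentials, cohomologous to an auxiliary Kähler class, and which restricts to a genuine smooth Ricci-flat Kähler metric on $X_{\rm reg}$. This plays the role of the Yau metric of the smooth case but lives only on an \emph{open, incomplete} Kähler manifold, which is the source of most technical difficulties.

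The next step is the singular Bochner principle: any reflexive holomorphic tensor $\tau \in H^0(X,(T_X^{\otimes p}\otimes \Omega_X^{\otimes q})^{**})$ should restrict to a $\nabla^{\om}$-parallel tensor on $X_{\rm reg}$. On $X_{\rm reg}$ one has the usual Weitzenböck identity $\Delta |\tau|^2 \ge |\nabla \tau|^2$, but since $X_{\rm reg}$ is not compact one must justify integration by parts. The plan is to use a cutoff function $\chi_\ep$ supported away from $X_{\rm sing}$, exploit the klt assumption via the finiteness of certain weighted volumes near $X_{\rm sing}$, and use the $L^\infty$ bound on the potentials of $\om$ together with a priori bounds on $|\tau|_{\om}$ coming from its reflexivity, to let $\ep \to 0$ and conclude $\nabla \tau \equiv 0$. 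Once this is established, the restricted holonomy group $\Holo$ of $(X_{\rm reg},\om)$ is well-defined, and a de~Rham--type decomposition of the $\Holo$-representation yields a $\nabla^\om$-parallel orthogonal splitting $T_{X_{\rm reg}} = \bigoplus_\alpha F_\alpha^\circ$ into irreducible pieces. Applying Berger--Simons to each irreducible factor forces its holonomy to be either trivial, or $\SU(n_\alpha)$, or $\Sp(n_\alpha/2)$; the Bochner principle above then produces, on each factor, either parallel $1$-forms (flat case), a parallel holomorphic volume form (ICY case), or a parallel holomorphic symplectic form (IHS case).

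The final, and by far most delicate, step is to integrate this infinitesimal decomposition into an honest product decomposition after a finite quasi-étale cover. The parallel subbundles $F_\alpha^\circ$ extend to reflexive subsheaves $F_\alpha \subset T_X$, and each $F_\alpha$ is automatically a foliation with trivialized (reflexive) determinant. The main obstacle is to show that these foliations are \emph{algebraically integrable} on a suitable quasi-étale cover $X'\to X$, and that after possibly replacing $X'$ by a further cover the leaves assemble into a genuine product $X' = T \times \prod Y_i \times \prod Z_j$, with $T$ a torus, the $Y_i$ ICY, and the $Z_j$ IHS. The plan here is to combine the reflexive tensor structure with Druel's algebraic integrability criterion for foliations with trivial canonical class on klt varieties, Höring--Peternell's decomposition of the tangent sheaf, and Greb--Kebekus--Peternell's control of the étale fundamental group of the smooth locus, to produce a splitting of $T_{X'}$ that integrates globally. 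The flat factor then gives a torus after a Bieberbach-type argument on the quasi-étale cover, and the remaining factors are identified as ICY or IHS via the Berger--Simons dichotomy, which completes the proof. The hardest point is really the passage from the differential-geometric splitting of $T_{X_{\rm reg}}$ to the algebraic product decomposition of a quasi-étale cover of $X$; everything else adapts the smooth skeleton more or less formally once the singular Bochner principle is in place.
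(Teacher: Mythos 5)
Your outline correctly tracks the projective strategy at a high level, but there is a genuine gap which makes the plan as written incomplete: you never address the fact that the key integrability ingredients only exist in the \emph{projective} category, whereas the statement is about compact Kähler klt varieties. Both Druel's algebraic integrability criterion (which goes through Bogomolov--McQuillan--Bost and is arithmetic in nature, requiring the foliation to be defined over a number field after spreading out) and the Höring--Peternell pseudoeffectivity theorem (which slices by very ample divisors and runs a Chow-scheme argument) are fundamentally projective tools. If $X$ is Kähler non-projective there is no Kähler analogue of these criteria, so your ``final, most delicate step'' simply does not launch. The paper handles this by a separate, substantial argument (Section~7, following \cite{BGL}): one first proves the decomposition for projective klt varieties, then for general Kähler $X$ one constructs a locally trivial algebraic approximation $\cX \to B$ with $X_0 \simeq X$ and a dense set of projective fibers, applies the projective theorem to a nearby projective fiber, and transports the resulting product decomposition back to the special fiber via relative Douady spaces, the behavior of the singular Ricci-flat metrics in the family, and a Koll\'ar--Larsen--type splitting lemma. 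This deformation-theoretic bridge is not a formality; it is a major new input absent from the smooth case, and your plan does not contain it or any substitute for it.

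Two smaller points are also glossed over. First, you attribute the torus factor to ``a Bieberbach-type argument,'' but the real difficulty is \emph{identifying} the flat factor of $T_X$ with $1$-forms on a quasi-étale cover (equivalently showing $\mathrm{rk}(\cF_0)=\widetilde q(X)$); this uses the Shafarevich morphism for linear representations of Kähler groups \cite{CCE15} together with the existence of a maximally quasi-étale cover, and is a theorem in its own right, not a corollary of Bieberbach. Second, you need to pass from restricted holonomy $G^\circ$ (controlled by Berger--Simons) to the full holonomy $G$: one must prove $G/G^\circ$ is finite, which the paper extracts from the finiteness of $H_1(X_{\rm reg},\Z)$ via a normalizer computation. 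Without this step the quasi-étale cover that eventually makes the holonomy connected need not be finite, and the decomposition would not descend to a finite cover of $X$ as required.
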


We refer to the next section for the meaning of the various notions and objects involved in the statement above, and will only comment on the following two things:
 
$\bullet$  Part of the difficulty in this singular setting is to come up with a suitable notion of irreducible Calabi--Yau variety and irreducible holomorphic symplectic variety. Many different definitions had been proposed in the past but the one appearing in the theorem above was introduced by Greb-Kebekus-Peternell \cite{GKP}. We refer to Section~\ref{sec irr} for a brief overview and to \cite[Section~14]{GGK} for an in-depth comparison of existing definitions along with many examples.

$\bullet$ The proof of the Kähler case in \cite{BGL} heavily relies on the projective case by a deformation theoretic argument. This is a major difference with what happens in the smooth case. 

\subsection{Brief outline of the proof}

We indicate below which elements of the proof in the smooth case go through in the singular case (possibly with additional work required), and which ones outright collapse. Then, we point out the major new inputs to complete the proof.\\

{\it What survives. }

\noindent
$-$ The existence of a Ricci flat Kähler metric $\omega$ on $\Xr$, cf Theorem~\ref{thm EGZ}.

\noindent
$-$ The Bochner principle on $(\Xr,\omega)$, cf Theorem~\ref{bochner}.

\noindent
$-$ The identification of the {\it restricted} holonomy of $(\Xr, \omega)$, cf Section~\ref{sec rest hol}. \\

{\it What doesn't. }

\noindent
Essentially, things crash when trying to appeal to de Rham splitting theorem for the universal cover of $\Xr$, due to the incompleteness of $(\Xr,\omega)$. Even if one could prove such a result, it is unclear what the analog of Cheeger-Gromoll's theorem should be in this setting (maybe a suitable compactification theorem for the non-euclidean factors in the universal cover).\\

{\it New inputs. }

\noindent
$-$ The identification of flat directions in the tangent with $1$-forms on quasi-étale covers, cf Theorem~\ref{thm flat}.

\noindent
$-$ Finiteness of the number of connected components of the holonomy of $(\Xr, \omega)$, cf Theorem~\ref{thm holonomy}.

\noindent
$-$ Algebraic integrability of the non-flat factors in the projective case, cf Theorem~\ref{thm HP}.

\noindent
$-$ Splitting theorem for algebraically integrable foliations, cf Theorem~\ref{split druel}. 

\bigskip

{\bf Acknowledgements.} These lecture notes are based on a series of lectures given at the C.I.M.E summer school on Calabi-Yau varieties in 2024. I am very grateful to the organizers Simone Diverio, Vincent Guedj, and Hoang Chinh Lu for the kind invitation to participate. I am indebted to Hyunsuk Kim and the anonymous referees for their very thorough and careful reading of these notes and for their numerous valuable remarks and suggestions which helped improve the exposition. Finally, I would like to thank my collaborators Ben Bakker, Junyan Cao, Benoît Claudon, Stéphane Druel, Patrick Graf, Daniel Greb, Stefan Kebekus, Christian Lehn and Mihai P\u{a}un for all the fruitful discussions we have had over the years on the topics appearing in these notes.

\section{Preliminary material}

Before getting on to the material let us give a few additional references for introductory reading on the several topics that will be touched on here, like Kähler holonomy \cite{Besse, Joy00}, holomorphic foliations \cite{Brunella}, currents on singular spaces \cite{Dem85,BEG}, singularities and the Minimal Model Program \cite{Kollar97, KollarMMP}.

\subsection{Varieties}
In what follows, a variety will be an irreducible and reduced complex space.  

\begin{defi}[Quasi-étale covers]
Let $f:Y\to X$ be a morphism between normal varieties. One says that $f$ is a cover if $f$ is finite and surjective. Moreover, one says that $f$ is quasi-étale if $f$ is étale in codimension one. That is, there exists $Z\subset Y$ proper analytic subset of codimension at least two such that $f|_{Y\setminus Z}$ is étale onto its image. 
\end{defi}

We will repeatedly use the following two crucial properties of quasi-étale covers. First, Nagata's purity of branch locus asserts that if $f:Y\to X$ is a cover, then $f$ is quasi-étale if, and only if that $f$ is étale over $\Xr$. Next, let $X$ be a normal variety and let $f^\circ:Y^\circ\to \Xr$ be an étale cover. Then,  $f^\circ$ admits a unique extension to a quasi-étale cover $f:Y\to X$ where $Y$ is a normal variety containing $Y^\circ$ as a dense open subset.

\begin{defi}[Kähler variety]
Let $X$ be a normal variety. A Kähler form on $X$ is a Kähler form $\omega_X$ on $X_{\rm reg}$ such that given any local embedding $X\underset{\rm loc}{\hookrightarrow} \C^N$, $\omega_X$ extends to a Kähler form on $\C^N$. One says that $X$ is Kähler if it admits a Kähler form.  
\end{defi}

If $\omega_X$ is a Kähler form, then there exists a covering $X=\cup U_\alpha$ by Stein subsets such that $\omega_X$ is given by a collection of smooth strictly plurisubharmonic (psh) functions $\phi_\alpha$ on $U_\alpha$ such that $\phi_\alpha-\phi_\beta$ is pluriharmonic on $U_{\alpha\beta}=U_\alpha \cap U_\beta$. In particular, a Kähler form determines an element in $H^0(X,\cC^\infty/\mathrm{PH}_X)$ where $\mathrm{PH}_X$ is the sheaf of pluriharmonic functions. Denote by $\{\omega_X\}$ the image of $\omega_X$ under the connecting map \[H^0(X,\cC^\infty/\mathrm{PH}_X) \longrightarrow H^1(X, \mathrm{PH}_X).\] We have another connecting homomorphism $\delta:H^1(X, \mathrm{PH}_X)\to H^2(X, \C)$ coming from the exact sequence \[0\longrightarrow \R\overset{i\cdot}{\longrightarrow} \cO_X\overset{\mathrm{Re}}{\longrightarrow} \mathrm{PH}_X\longrightarrow 0.\]  We set $[\omega_X]=\delta(\{\omega_X\})\in H^2(X,\R)$.  Such a class is called a Kähler class.

\subsection{Sheaves}
\begin{defi}[Reflexive differentials]
Let $X$ be a normal variety of dimension $n$ and let $\Omega_X$ be the sheaf of Kähler differentials. For any integer $p\in [0,n]$, we define the sheaf of reflexive differentials to be the reflexive hull $\Omega_X^{[p]}:=(\Lambda^p \Omega_X)^{**}$. The tangent sheaf is the dual  $T_X:=\Omega_X^*$; it is a reflexive coherent sheaf. 
\end{defi}

\begin{notation}[Reflexive symmetric powers and pullback]
Let $X$ be a normal variety, let $\cF$ be a reflexive sheaf on $X$. If $k\in \N^*$ is an integer, we denote by $S^{[k]}\cF:=(S^k\cF)^{**}$ the reflexive hull of the $k$-th symmetric tensor power of $\cF$.  If $Y$ is a normal variety admitting a morphism $f:Y\to X$, we denote by $f^{[*]}\cF:=(f^*\cF)^{**}$ the reflexive hull of the pullback of $\cF$ by $f$.  
\end{notation}

\begin{defi}[Foliations]
Let $X$ be a normal variety. A foliation on $X$ is a coherent subsheaf $\cF\subset T_X$ such that $\cF$ is saturated (that is, $T_X/\cF$ is torsion-free) and $\cF|_{\Xr}$ is stable under Lie bracket. Let $X^\circ$ be the largest open subset of $X_{\rm reg}$  on which $\cF$ is a subbundle of $T_X$.

 A \emph{leaf} of $\cF$ is a connected, locally closed holomorphic submanifold $L\subset X^\circ$ such that $T_L=\cF|_L$. A leaf is called analytic (or algebraic if $X$ is algebraic) if it is open in its Zariski closure, or in other words if $\dim \overline L^{\rm Zar} = \mathrm{rk}(\cF)$. 
 
 The foliation $\cF$ is called analytically integrable (or algebraically integrable if $X$ is algebraic) if its leaves are analytic (resp. algebraic). 
\end{defi}

\begin{defi}[Stable sheaves]
Let $\cF$ be a coherent reflexive sheaf of rank $r$ on a normal compact Kähler variety $X$ of dimension $n$ and let $\alpha \in H^2(X, \C)$ be a Kähler class. One says that $\cF$ is stable with respect to $\alpha$ if for any subsheaf $\cG\subset \cF$ or rank $0<s<r$, we have
\[\mu_{\alpha}(\cG)=\frac 1 s c_1(\cG) \cdot \alpha^{n-1} < \frac 1 r c_1(\cF) \cdot \alpha^{n-1} =\mu_{\alpha}(\cF). \]
The number $c_1(\cF) \cdot \alpha^{n-1} =\mu_{\alpha}(\cF)$ can be defined as $c_1(f^*\cF)\cdot (f^*\alpha)^{n-1}$ where $f:\wX\to X$ is any resolution of singularities. 

One says that $\cF$ is polystable with respect to $\alpha$ if one can write $\cF= \oplus_{i=1}^k \cF_i$ where each $\cF_i$ is $\alpha$-stable of slope $\mu_\alpha(\cF)$. 

One say that $\cF$ is strongly stable with respect to $\alpha$ if for any $f:Y\to X$ quasi-étale cover, the reflexive pull-back $f^{[*]}\cF=(f^*\cF)^{**}$ is stable with respect to $f^*\alpha$. 
\end{defi}

\begin{exa}
Let $S$ be a K3 surface and let $X:=(S\times S)/\langle i \rangle$ where $i(s,t)=(t,s)$ is the natural involution. Then $T_X$ is stable with respect to any Kähler class but $T_{S\times S}$ is not, while the degree two cover $S\times S\to X$ is quasi-étale. Thus, $T_X$ is not strongly stable.   
\end{exa}

\subsection{Holonomy}

Let $(E,\nabla)$ be a vector bundle or rank $r$ equipped with a connection on a differentiable manifold $X$ and let $x\in X$. Given any loop $\gamma $ at $x$, parallel transport along $\gamma$ yields an element $\tau_\gamma \in \mathrm{GL}(E_x)$. We define  $\Hol_x(E,\nabla):=\{\tau_\gamma, \, \gamma \, \mbox{loop at } \, x\}$. Similarly, one defines $\Holo_x(E,\nabla)$ using only loops which are homotopic to the constant loop. The subgroup $\Holo_x(E,\nabla)\subset \Hol_x(E,\nabla)$ is normal, it is a connected Lie subgroup of $\GL(E_x)$ and there is a surjection 
\begin{equation}
\label{pi 1 hol}
\pi_1(X)\twoheadrightarrow \Hol_x(E,\nabla)/\Holo_x(E,\nabla).
\end{equation}
If one views $\Hol_x(E,\nabla)\subset \mathrm{GL}(\R^r)$, then changing the basepoint $x$ yields a conjugate subgroup so that the conjugacy class of $\Hol_x(E,\nabla)$ inside $\mathrm{GL}(\R^r)$ is well-defined independently of $x$. 

If the connection is flat (that is, $\nabla^2=0$), then parallel transport only depends on the homotopy class of the loop, hence $\Holo(E,\nabla)=\{1\}$. Conversely, if $\Holo(E,\nabla)=\{1\}$, then one can unambiguously use parallel transport to extend a base of $E_x$ to a parallel frame of $E$ in a simply connected neighborhood $U$ of $x$. In particular, $\nabla^2=0$ on $U$. In summary, one has
\begin{equation}
\label{hol flat}
(E,\nabla) \,\, \mbox{is flat } \quad \Longleftrightarrow \quad \Holo(E,\nabla)=\{1\}. 
\end{equation}

\begin{defi}[Kähler holonomy]
If $(X,\omega)$ is a Kähler manifold of complex dimension $n$ and $x\in X$, then we define $\Hol_x(X,\omega)$ to be the holonomy of the holomorphic hermitian bundle $(T_X, \nabla^h)$ where $\nabla^h$ is the Chern connection associated to the hermitian metric $h$ on $T_X$ induced by $\omega$. Since both the complex structure and the hermitian metric are parallel, one can view $\Hol_x(X,\omega)\subset \Un$. 
\end{defi}

Note that if $f:Y\to X$ is an étale cover and $y\in f^{-1}(x)$, then every loop at $x$ which is homotopic to a constant loop can be lifted to a loop at $y$ hence there is a canonical isomorphism $\Holo_x(X,\omega)\simeq \Holo_y(Y,f^*\omega)$.

\section{Klt singularities: topological, analytic and metric aspects}

\subsection{Definition and examples}
Let $X$ be a normal variety of dimension $n$ such that $K_X:=(\Lambda^n \Omega_X)^{**}$ is a $\Q$-line bundle. That it, there exists an integer $m\ge 1$ such that the reflexive hull $K_X^{[m]}:=(K_X^{\otimes m})^{**}$ is locally free, i.e. it is a line bundle. From now one, we will use the additive notation for powers of line bundles. Let $\pi:\wX\to X$ be a log resolution of singularities of $X$ with exceptional locus a divisor $E=\sum_{i=1}^r E_i$ with simple normal crossing support. One can canonically write
\[K_{\wX/X}:=K_{\wt X}-\pi^*K_X= \sum_{i=1}^r a_i E_i\]
where $a_i \in \mathbb Q$ is called the discrepancy of $E_i$. 

\begin{defi}
Let $X$ be a normal variety and let $\pi : \wX\to X$ be a resolution as above. One says that $X$ has Kawamata log terminal (klt for short) singularities (resp. canonical singularities, terminal singularities) if for any log resolution $\pi : \wX\to X$ as above, one has $a_i>-1$ (resp. $a_i \ge 0$, $a_i>0$) for every index $i$. 
\end{defi}

\begin{exa}
Any finite quotient singularity (i.e. locally analytically isomorphic to $(\mathbb C^n/G, 0)$ where $G\subset \GLn$ is finite) is klt. 
\end{exa}

\begin{exa}
The $A_1$ singularity $Q:=(z_0^2+\cdots +z_n^2=0)\subset \C^{n+1}$ is klt for any $n\ge 2$. However, if $n\ge 3$, this is not a quotient singularity. There are several ways to see it. 

The most direct one is to observe that the local fundamental group of the singularity is trivial; that is $\pi_1(Q\setminus \{0\})=\{1\}$. Let us sketch a proof of that fact. Denote by $\overline Q$ the smooth projective quadric of dimension $n-1$. We have a $\C^*$-fiber bundle $\pi: Q\setminus \{0\}\to \overline Q$ which is nothing but the restriction of $L:=\cO_{\overline Q}(-1)\to \overline Q$ to the complement of the zero section. The fibration $\pi$ induces en exact sequence 
\[\pi_2(\overline Q)\to \Z=\pi_1(\C^*)\to \pi_1(Q\setminus \{0\})\to \pi_1(\overline Q)\to \{1\}.\]
 Since $\overline Q$ is simply connected, all we have to do is prove that the map $\partial:\pi_2(\overline Q)\to \pi_1(\C^*)\simeq \Z$ is onto. It is a classical fact (see e.g. \cite[p70-]{BottTu}) that $\partial$ corresponds to $c_1(L)\in H^2(X,\Z)$ under the isomorphism $\pi_2(\overline Q)\to H_2(X,\Z)$ provided by Hurewicz's theorem.  Now Lefschetz hyperplane theorem implies that the natural map $H_2(\overline Q,\Z)\to H_2(\PP^n, \Z)$ is surjective since $n\ge 3$. Since pairing with $c_1(\cO_{\PP^n}(-1))$ induces an isomorphism $H_2(\PP^n, \Z)\to \Z$, we deduce that $\partial$ is surjective, hence the result. 

Alternatively, one can invoke \cite{Sch71} stating that isolated, quotient singularities of dimension at least three are rigid but $Q$ is obviously smoothable.  
\end{exa}

\subsection{Topological properties of klt singularities}

As we have seen above, klt singularities are not finite quotient singularities in general. Yet, one of the paramount achievements in understanding the local topology of klt singularities is the following theorem showing that just like quotient singularities, their regional fundamental group is finite. 

\begin{thm}[\cite{Braun21}]
\label{thm braun}
Let $X$ be a normal variety with klt singularities, and let $x\in X$. There exists a (small) euclidean neighborhood $U$ of $x$ such that $\pi_1(U_{\rm reg})$ is finite. 
\end{thm}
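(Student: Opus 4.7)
The plan is to combine Xu's theorem on the finiteness of the \emph{étale} local fundamental group of a klt singularity with a residual finiteness argument for the topological local fundamental group, where the latter is controlled through Kollár components and Birkar's boundedness results. Throughout I would replace $X$ by an algebraization of the germ at $x$ and work with $\pi_1^{\rm loc}(X,x) := \varprojlim_U \pi_1(U \cap X_{\rm reg})$, which stabilizes for $U$ sufficiently small; producing a single $U$ with $\pi_1(U_{\rm reg})$ finite is equivalent to proving $\pi_1^{\rm loc}(X,x)$ is finite.

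Xu's theorem gives the finiteness of the profinite completion $\widehat{\pi_1^{\rm loc}(X,x)}$. Since a residually finite group with finite profinite completion is itself finite, it suffices to prove that $\pi_1^{\rm loc}(X,x)$ is residually finite. To that end, I would invoke the Kollár component technology: one produces a plt modification $\pi: Y \to X$ extracting a single prime divisor $E$ over $x$, with $-E$ being $\pi$-ample and $(Y, E)$ plt; by adjunction $(E, \mathrm{Diff}_E)$ is a klt log Fano pair. A small punctured neighborhood of $x$ is homotopy equivalent to a Seifert-type fibration over $E$ whose orbifold structure is prescribed by $\mathrm{Diff}_E$, so $\pi_1^{\rm loc}(X,x)$ fits into an exact sequence relating it to the orbifold fundamental group $\pi_1^{\rm orb}(E, \mathrm{Diff}_E)$ modulo a central meridian. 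Any finite quotient of $\pi_1^{\rm orb}(E, \mathrm{Diff}_E)$ then produces a finite quotient of $\pi_1^{\rm loc}(X,x)$ via pullback of the Kollár component to the corresponding quasi-étale cover of $X$, which remains klt by the stability of the klt condition under such covers.

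The main obstacle is to produce enough such finite quotients \emph{uniformly} as one climbs the tower of quasi-étale covers. This is the technical core of Braun's work, and it rests on Birkar's boundedness of $\ep$-log canonical Fano varieties together with the boundedness of $N$-complements: these results imply that the pairs $(E, \mathrm{Diff}_E)$ arising as Kollár components of $X$ and of its quasi-étale covers all live in a bounded family of klt log Fano pairs of fixed dimension. Since orbifold fundamental groups are finite and uniformly bounded within such a bounded family, one extracts the required quantitative residual finiteness, and combined with Xu's finiteness of the profinite completion this concludes the proof. The delicate point is precisely the uniformity: controlling how discrepancies and the different divisor behave under the (possibly infinite) tower of finite covers, which is exactly where the full strength of Birkar's boundedness theorems becomes indispensable.
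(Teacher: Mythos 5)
The paper does not prove this theorem: it is stated as a citation to Braun's article, with a one-line remark that it builds on Tian--Xu and Greb--Kebekus--Peternell and extends to the analytic category via Fujino. So there is no ``paper proof'' to compare against; your proposal must be judged as a reconstruction of Braun's argument. At the level of ingredients you have the right skeleton: Xu's finiteness of the profinite (\'etale) local fundamental group, the plt blowup / Koll\'ar component giving a Seifert-type $S^1$-fibration of the link over the exceptional klt log Fano pair $(E,\mathrm{Diff}_E)$, the resulting exact sequence $\mathbb Z\to\pi_1^{\rm loc}(X,x)\to \pi_1^{\rm orb}(E,\mathrm{Diff}_E)\to 1$, and the observation that once the quotient is known to be finite, residual finiteness (equivalently, finiteness of the image of the meridian) is the content that must be added to Xu's theorem.

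The gap is that the hard half of Braun's paper is glossed over. You reduce everything to ``orbifold fundamental groups are finite and uniformly bounded within such a bounded family,'' but that statement \emph{is} the theorem one has to prove, and it does not follow from boundedness of the Fano pairs: a bounded family still contains infinitely many deformation and isomorphism types, and boundedness gives no a priori control on fundamental groups. In Braun's argument the finiteness of $\pi_1^{\rm orb}$ of a klt (weak) log Fano pair and the finiteness of the regional $\pi_1$ of a klt singularity are proved together by induction on dimension; the global Fano case is handled by running an equivariant MMP to reach Mori fibre spaces and then invoking the Jordan property for birational automorphism groups of rationally connected varieties (Prokhorov--Shramov, which itself rests on BAB), not by bounded complements. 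Your appeal to $N$-complements is not the relevant mechanism, and the picture of ``climbing a tower of quasi-\'etale covers and extracting uniform quotients'' conflates the proof of Xu's profinite statement (where towers genuinely appear) with the additional topological input Braun supplies. A cleaner way to close the argument once $\pi_1^{\rm orb}(E,\mathrm{Diff}_E)$ is known finite: the extension makes $\pi_1^{\rm loc}$ cyclic-by-finite, and if the cyclic part were infinite then $\pi_1^{\rm loc}$ would be virtually $\mathbb Z$ and hence admit finite quotients of arbitrarily large order, contradicting Xu directly---no separate residual-finiteness machinery is needed. The argument stands or falls with the Fano-side theorem, and that is precisely what your sketch does not supply.
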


This fundamental result builds on earlier work by Tian-Xu \cite{TianXu17} and Greb-Kebekus-Peternell \cite{GKP16}. Note that the above results actually deal with algebraic varieties, but they extend to analytic varieties thanks to Fujino's results \cite{Fujino22} on the relative MMP for projective morphisms between analytic spaces as explained e.g. in \cite[Remark~6.10]{CGGN}.\\

Let us now mention a couple of striking applications of Theorem~\ref{thm braun}. In order to do so, we need to introduce the following notion, originating in \cite{GKP16}. 

\begin{defi}
\label{mqe}
Let $X$ be a normal variety. A maximally quasi-étale cover of $X$ is a quasi-étale Galois cover $f:Y\to X$ such that the natural map between the étale fundamental groups
\[\hat\pi_1(Y_{\rm reg})\longrightarrow \hat \pi_1(Y)\]
is an isomorphism. 
\end{defi}

If $f:Y\to X$ is maximally quasi-étale, then it follows from Mal\v{c}ev theorem that any finite dimension linear representation of $\pi_1(Y_{\rm reg})$ factors through $\pi_1(Y)$. Equivalently, any flat bundle over $Y_{\rm reg}$ extends to a flat bundle on $Y$. 

We will state the next result for compact varieties for simplicity, but the result is local in nature. 

\begin{cor}
\label{mqe}
Let $X$ be compact normal variety with klt singularities. Then $X$ admits a maximally quasi-étale cover. 
\end{cor}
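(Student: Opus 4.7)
The plan is to exhibit a quasi-étale Galois cover $f:Y\to X$ all of whose singular points have trivial local fundamental group. Once this is achieved, the natural surjection $\hat\pi_1(Y_{\rm reg})\twoheadrightarrow\hat\pi_1(Y)$ is automatically injective: its kernel is the closed subgroup of $\hat\pi_1(Y_{\rm reg})$ normally generated by the images of the local fundamental groups at points of $Y_{\rm sing}$. This kernel description is a purity-plus-Van-Kampen observation: an open subgroup $L\subset\hat\pi_1(Y_{\rm reg})$ descends to $\hat\pi_1(Y)$ if and only if the associated quasi-étale cover of $Y$ is actually étale, which in turn holds if and only if $L$ contains each local fundamental group of $Y$.

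Set $\pi:=\hat\pi_1(X_{\rm reg})$. For each $p\in X_{\rm sing}$ and each auxiliary path from the basepoint to near $p$, the local fundamental group $\pi_1^{\rm loc}(p)$ maps to $\pi$ with image a subgroup $H_p\subset\pi$ well-defined up to conjugacy. Theorem~\ref{thm braun} makes each $H_p$ finite. Fix a Whitney stratification of $X$ refining $X=X_{\rm reg}\sqcup X_{\rm sing}$: along each stratum the conjugacy class of $H_p$ in $\pi$ is locally constant, and since $X$ is compact the stratification is finite, so only finitely many conjugacy classes of finite subgroups $H_1,\ldots,H_m\subset\pi$ arise in this way.

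The next step is to produce an open normal subgroup $N\lhd\pi$ satisfying $N\cap H_i=\{1\}$ for every $i$. By profinite separation, for each of the finitely many non-identity elements of $\bigcup_i H_i$ some open subgroup of $\pi$ avoids it; the intersection $M$ of these finitely many open subgroups is open and meets each $H_i$ trivially. Any open subgroup of a profinite (hence compact) group has finite index, so $M$ has only finitely many conjugates and its normal core $N:=\bigcap_{g\in\pi}gMg^{-1}$ is again open. By normality of $N$ one also has $N\cap gH_ig^{-1}=\{1\}$ for every $g\in\pi$ and every $i$.

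Let $f:Y\to X$ be the quasi-étale Galois cover corresponding to $N$, so that $\hat\pi_1(Y_{\rm reg})=N$. For any $y\in Y_{\rm sing}$ lying over $p\in X_{\rm sing}$, a small punctured analytic neighborhood of $y$ maps étale onto one of $p$, so its profinite fundamental group identifies, after an appropriate conjugation, with $N\cap H_p$ where $H_p$ is a conjugate of some $H_i$ --- and this intersection is trivial by construction. Hence every local fundamental group of $Y$ vanishes, and $f$ is the desired maximally quasi-étale cover. The step that will require the most care is the kernel description invoked in the first paragraph: this is straightforward Van Kampen in the isolated-singularity case, and reduces to it in general via the local triviality of a Whitney stratification combined with the finiteness provided by Theorem~\ref{thm braun}.
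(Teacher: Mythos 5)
Your approach is correct in outline and proves the result from the same key input --- Theorem~\ref{thm braun} --- that underlies the argument the paper cites. The paper itself offers no proof but refers to \cite{GKP16}, whose argument is a stabilization/tower argument: one shows there can be no infinite tower of quasi-étale, non-étale Galois covers, via a descending-chain argument on the finite regional fundamental groups, and then deduces the existence of a maximally quasi-étale cover. Your argument is instead a direct, one-shot construction: stratify $X_{\rm sing}$, observe that only finitely many conjugacy classes of finite local fundamental groups occur, and use profinite separation to produce a single open normal subgroup $N\lhd\hat\pi_1(X_{\rm reg})$ avoiding all their non-trivial elements simultaneously. This is a genuine streamlining of the same circle of ideas: the chain-stabilization bookkeeping of \cite{GKP16} is replaced by one explicit intersection of finitely many open subgroups.

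Two points deserve more attention than you give them. First, the finiteness of the set of conjugacy classes of $H_p$ is the crux, and proving that the conjugacy class of $H_p$ in $\hat\pi_1(X_{\rm reg})$ is locally constant along a connected stratum rests on the Thom--Mather local triviality of the Whitney stratification (the punctured neighborhoods $U_p\cap X_{\rm reg}$ fit into a locally trivial bundle over the stratum, so their $\pi_1$'s can be transported up to conjugation); you single out the kernel description as the delicate step, but this one is at least as delicate and uses the same local-triviality input. Second, your closing assertion that ``every local fundamental group of $Y$ vanishes'' is stronger than what you prove and stronger than what you need: what follows from $N\cap gH_pg^{-1}=\{1\}$ is that the \emph{image} of each $\pi_1^{\rm loc}(y)$ in $\hat\pi_1(Y_{\rm reg})=N$ is trivial, while the abstract group $\pi_1^{\rm loc}(y)$ may remain nontrivial if $\pi_1^{\rm loc}(p)\to\hat\pi_1(X_{\rm reg})$ has a kernel. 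Since the kernel of $\hat\pi_1(Y_{\rm reg})\to\hat\pi_1(Y)$ is normally generated precisely by those images, this weaker conclusion is exactly what is needed and your proof goes through.
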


The result was proved in \cite[Theorem~1.5]{GKP16} for quasi-projective varieties relying on the finiteness of the fundamental group of the link of a klt singularity proved by \cite{TianXu17}. Here again, everything extends to the analytic setting thanks to \cite{Fujino22}. 

\begin{cor}
\label{torus quotient}
Let $X$ be compact normal Kähler variety with klt singularities such that $T_{\Xr}$ is flat. Then there exists a quasi-étale Galois cover $T\to X$ where $T$ is complex torus. 
\end{cor}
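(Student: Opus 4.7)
The plan is to reduce to the smooth compact Kähler setting via a maximally quasi-étale cover, and then invoke the classical Beauville--Bogomolov theorem. First I would apply Corollary~\ref{mqe} to produce a maximally quasi-étale Galois cover $f:Y\to X$. Since the composition of a finite Galois cover of $Y$ with $f$ (taking a Galois closure if needed) remains a quasi-étale Galois cover of $X$, it suffices to construct a finite Galois cover $T\to Y$ with $T$ a complex torus. Writing $n=\dim X=\dim Y$ and using that $f$ is étale over $\Xr$ by Nagata's purity of branch locus, pullback equips $T_{Y_{\rm reg}}$ with a flat connection away from the codimension-$\ge 2$ subset $Y_{\rm reg}\setminus f^{-1}(\Xr)=Y_{\rm reg}\cap f^{-1}(\Xs)$ of the smooth manifold $Y_{\rm reg}$, and this flat connection extends uniquely across that subset by a Hartogs-type argument.

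Now by the maximally quasi-étale property, the monodromy representation $\pi_1(Y_{\rm reg})\to \GL(n,\C)$ of the flat bundle $T_{Y_{\rm reg}}$ factors through $\pi_1(Y)$, and thus produces a flat holomorphic vector bundle $\cV$ on $Y$ whose restriction to $Y_{\rm reg}$ is $T_{Y_{\rm reg}}$. Since $\cV$ and $T_Y$ are both reflexive sheaves agreeing over an open subset with codimension $\ge 2$ complement, we have $T_Y=\cV$; in particular $T_Y$ is locally free on the whole of $Y$. The next step is to invoke the nontrivial regularity result asserting that a klt variety whose tangent sheaf is locally free is smooth, from which one concludes that $Y$ is a smooth compact Kähler manifold with holomorphically flat tangent bundle.

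It then remains to apply the classical Beauville--Bogomolov decomposition on $Y$. Holomorphic flatness of $T_Y$ implies $c_1(Y)=0$, and Yau's theorem provides a Ricci-flat Kähler metric. The smooth decomposition theorem splits the universal cover $\widetilde Y=\C^k\times\prod_i X_i$, where the $X_i$ are simply connected irreducible Calabi--Yau or irreducible holomorphic symplectic manifolds, each of which satisfies $H^0(X_i,\Omega_{X_i}^1)=\{0\}$. But holomorphic flatness of $T_Y$ pulls back to holomorphic flatness of $T_{X_i}$, which on the simply connected $X_i$ trivializes $T_{X_i}$ and hence $\Omega_{X_i}^1$; this contradicts the vanishing above unless there are no $X_i$ factors at all. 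Therefore $\widetilde Y=\C^n$, and Bieberbach's theorem supplies a finite-index subgroup of $\pi_1(Y)$ acting by translations, giving the desired finite étale cover of $Y$ by a complex torus.

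The main obstacle in this plan is the assertion that klt combined with locally free tangent sheaf forces smoothness: this is a delicate regularity statement requiring genuine input from the theory of singularities, and it is not a formal consequence of the reflexive-extension step that precedes it. The remaining steps are standard applications of the maximally quasi-étale construction, reflexive-hull techniques, and the smooth Beauville--Bogomolov theorem.
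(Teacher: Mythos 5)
Your proposal is correct and follows essentially the same route as the paper's primary proof: pass to a maximally quasi-\'etale cover via Corollary~\ref{mqe}, extend the flat structure to conclude $T_Y$ is locally free, invoke the Zariski--Lipman theorem for klt singularities to deduce $Y$ is smooth, and reduce to the classical smooth case. You simply spell out in more detail the two steps the paper states tersely (the reflexive-extension argument making $T_Y$ a flat bundle, and the deduction that a compact K\"ahler manifold with holomorphically flat tangent bundle is a torus quotient).
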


The result was proved for $X$ projective in \cite[Corollary~1.16]{GKP16} and in \cite[Theorem~D]{CGGN}. The proof is an easy consequence of Corollary~\ref{mqe}. Indeed, the latter enables us to replace $X$ with a maximally quasi-étale cover $Y$. Then $T_Y$ will be flat, hence locally free, but the solution of Zariski-Lipman conjecture for klt singularities \cite{DruelZL,GKK} shows that $Y$ is smooth and we are reduced to the well-known smooth case (building upon Yau's solution of the Calabi conjecture). 

Alternatively, one could work locally using Theorem~\ref{thm braun} to show that $X$ has only quotient singularities, and then use the orbifold version of the uniformisation theorem for flat orbifolds. More precisely, once we know that $X$ is an orbifold, it is elementary to check that $T_X$ is a flat orbi-bundle. Then the Calabi-Yau metric $\omega$ is an orbifold metric and since the orbifold Chern classes of $X$ vanish, one sees as in the smooth case that $\omega$ is flat. The orbifold universal cover of $(X,\omega)$ is a complete flat Kähler {\it manifold} thanks to \cite[Corollary~2.16 on p. 603]{BH99}, hence it is biholomorphically isometric to $(\mathbb C^n, \omega_{\rm eucl})$. The result then follows from Bieberbach's theorem.

\subsection{Analytic properties of klt singularities}

It is well-known that klt singularities are rational. That is, given any resolution of singularities $\pi:\wX\to X$, we have $R^j \pi_*\cO_{\wX}=0$ for $j\ge 1$. In this short paragraph, we recall the following extension theorem for reflexive differential form on varieties with normal singularities. 

\begin{thm}[\cite{GKKP11, KS}]
\label{thm extension forms}
Let $X$ be a normal variety with rational singularities and let $\pi:\wX\to X$ be a resolution of singularities. Then for any integer $p\in [0,n]$, the sheaf $\pi_*\Omega_{\wX}^p$ is reflexive, hence coincides with $\Omega_X^{[p]}$. 
\end{thm}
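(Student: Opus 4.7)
The strategy is to establish reflexivity of $\pi_*\Omega_{\wX}^p$; once that is settled, the two reflexive coherent sheaves $\pi_*\Omega_{\wX}^p$ and $\Omega_X^{[p]}$ on the normal variety $X$ agree on the big open set $\Xr$ (where $\pi$ is an isomorphism), hence globally. Reflexivity of a coherent sheaf on a normal variety is equivalent to a Hartogs-type extension property across codimension $\ge 2$ subsets, so the task reduces to the following local extension problem: for every open $U\subset X$ and every closed analytic $Z\subset U$ of codimension $\ge 2$, every holomorphic $p$-form defined on $\pi^{-1}(U\setminus Z)$ must extend to a holomorphic $p$-form on $\pi^{-1}(U)$.

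Working locally, I assume $X$ is Stein and, after replacing $\pi$ by a further blowup if necessary (using that $\pi_*\Omega_{\wX}^p$ is independent of the chosen resolution, via Grauert-Riemenschneider-type comparison between two dominating resolutions), that the exceptional divisor $E=\sum E_i$ over $Z$ has simple normal crossings. A standard Riemann-extension argument on the smooth manifold $\pi^{-1}(U)$ first produces a meromorphic extension along $E$, and refining the pole estimates along each $E_i$ shows that the extension actually sits inside the logarithmic sheaf $\Omega_{\pi^{-1}(U)}^p(\log E)$. Thus everything comes down to proving that no residue along any exceptional component $E_i$ lying over $Z$ survives: the logarithmic extension is in fact plain holomorphic.

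This last step is the heart of the theorem and, in my view, the main obstacle: it is the place where the hypothesis of rational singularities becomes essential. My preferred route is to use Kovács's theorem that rational singularities are du Bois, which endows $X$ with a filtered de Rham complex whose graded pieces behave well under $\pi$. Via the formalism of pure Hodge modules on $\wX$, one may identify $\pi_*\Omega_{\wX}^p$ with a graded piece of the Hodge-filtered pushforward of the trivial Hodge module; strictness of the Hodge filtration combined with Grauert-Riemenschneider vanishing then matches this pushforward with the du Bois piece of $X$, and the du Bois property forces the logarithmic extension produced above to be genuinely holomorphic. A more hands-on alternative, available when $X$ is merely assumed klt rather than just rational, is the cyclic-cover construction of Greb-Kebekus-Kovács-Peternell: one trivializes $mK_X$ along a suitable cover, converts the extension problem into the vanishing of $R^j\pi_*$ of an appropriate twist of $\Omega_{\wX}^p(\log E)$, and concludes by Kawamata-Viehweg-type vanishing.
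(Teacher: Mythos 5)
The paper does not prove this theorem; it is cited as a black box from \cite{GKKP11, KS} and invoked freely thereafter, so there is no internal proof to compare against. Your sketch correctly identifies the two proof strategies that those citations point to (Greb--Kebekus--Kov\'acs--Peternell via vanishing theorems for the log cotangent sheaf, and Kebekus--Schnell via Saito's theory of Hodge modules and the Du Bois property).

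However, there is a genuine gap in the outline, concentrated in the sentence claiming that a ``standard Riemann-extension argument'' produces a meromorphic extension along $E$, and that ``refining the pole estimates'' lands you in $\Omega^p_{\pi^{-1}(U)}(\log E)$. This is not a routine step and it is not a consequence of Riemann-type extension: a holomorphic $p$-form on the complement of a divisor in a manifold need not extend meromorphically at all, and an a priori bound of the required strength is essentially the content of the theorem. In the GKKP approach the logarithmic extension statement is itself the principal result, established by an elaborate induction on dimension using general hyperplane cuts, the residue sequence for $\Omega^p(\log)$, and Steenbrink-type vanishing; once that is in hand, killing the residues in the klt case is the comparatively easy consequence of the discrepancy bound. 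Your sketch inverts the difficulty, treating the log extension as soft and the residue-vanishing as the crux. You also slightly misstate the relationship between the hypotheses: klt implies rational, so the cyclic-cover route is not available under a \emph{weaker} hypothesis than rationality but under a stronger one, while the GKKP method actually reaches the incomparable log canonical class; the Kebekus--Schnell argument is the one that gives exactly the rational-singularities statement used here. Finally, the Kebekus--Schnell proof does not run through Grauert--Riemenschneider plus a strictness remark: its engine is Saito's decomposition theorem for $R\pi_*\Q^H_{\wX}[n]$ together with the identification, for rational singularities, of the appropriate graded Hodge piece with $\cO_X$ (via duality with $\pi_*\omega_{\wX}\simeq\omega_X$), which is what ultimately pins down all the $\pi_*\Omega_{\wX}^p$.
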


In other words, if $\sigma \in H^{0}(\Xr, \Omega_{\Xr}^p)$, then $\pi^*\sigma$ extends to a holomorphic $p$ form across $\mathrm{Exc}(\pi)$. 

\begin{cor}
\label{hodge duality}
Let $X$ be a compact normal variety with rational singularities. Then for any integer $p\in [0,n]$, one has $h^p(X,\cO_X)=h^0(X, \Omega_X^{[p]})$. 
\end{cor}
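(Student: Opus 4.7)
The plan is to reduce the identity to Hodge symmetry on a resolution and transfer both sides back to $X$ via rationality of the singularities together with the extension theorem (Theorem~\ref{thm extension forms}).

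First I would pick a resolution of singularities $\pi:\wX\to X$. Since $X$ is normal, $\pi_*\cO_{\wX}=\cO_X$, and since $X$ has rational singularities, $R^j\pi_*\cO_{\wX}=0$ for every $j\ge 1$. The Leray spectral sequence for $\pi$ with coefficients in $\cO_{\wX}$ then degenerates at the $E_2$-page and yields
\[
H^p(X,\cO_X)\simeq H^p(\wX,\cO_{\wX}).
\]

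Next I would invoke Hodge symmetry on $\wX$ to obtain $h^p(\wX,\cO_{\wX})=h^0(\wX,\Omega_{\wX}^p)$. In the setting of these notes the variety $X$ is compact Kähler, so any resolution $\wX$ belongs to the Fujiki class $\cC$ and in particular admits a Hodge decomposition, from which the symmetry $h^{p,0}(\wX)=h^{0,p}(\wX)$ follows. Finally, Theorem~\ref{thm extension forms} gives $\pi_*\Omega_{\wX}^p=\Omega_X^{[p]}$, so taking global sections yields
\[
H^0(\wX,\Omega_{\wX}^p)=H^0(X,\pi_*\Omega_{\wX}^p)=H^0(X,\Omega_X^{[p]}).
\]
Chaining the three identities produces the claimed equality.

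The only genuinely non-formal ingredient, and the one I would treat most carefully, is the Hodge symmetry on $\wX$: if one reads the hypotheses of the corollary literally one only has "rational singularities", without any Kähler assumption, and in that generality Hodge decomposition need not hold on $\wX$. In practice the corollary is used inside the Kähler setting of the notes, where membership of $\wX$ in class $\cC$ (via Varouchas's work) supplies the needed Hodge decomposition; for a purely non-Kähler statement one would either need to add a Kähler assumption on $X$ or to restrict attention to classes where Hodge symmetry is available.
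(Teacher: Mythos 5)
Your argument is exactly the paper's: rationality plus Leray gives $h^p(X,\cO_X)=h^p(\wX,\cO_{\wX})$, Hodge symmetry on $\wX$ gives $h^p(\wX,\cO_{\wX})=h^0(\wX,\Omega_{\wX}^p)$, and Theorem~\ref{thm extension forms} identifies $H^0(\wX,\Omega_{\wX}^p)$ with $H^0(X,\Omega_X^{[p]})$. Your closing remark that the literal statement omits the Kähler (or Fujiki class $\cC$) hypothesis needed for Hodge symmetry on $\wX$ is a fair observation about an implicit assumption the paper glosses over.
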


Indeed, one has $h^p(X,\cO_X)=h^p(\wX,\cO_{\wX})$ and by Hodge theory the latter coincides with $h^0(\wX, \Omega_{\wX}^p)$ and the corollary follows from the theorem above. 

\subsection{Kähler-Einstein metrics on klt varieties}

Let $X$ be a normal variety with klt singularities such that $c_1(X)=0\in H^2(X, \R)$. Here, $c_1(X)$ can be viewed as $-\frac 1m c_1(mK_X)\in H^2(X, \Q)$ if $m$ is divisible enough so that $mK_X$ is a line bundle so that one can consider its image under the usual map $H^1(X, \cO_X^*)\to H^2(X,\Z)$. 

It makes sense to consider a smooth hermitian metric $h_0$ on $K_X$. Essentially by definition, this is the datum of a (suitable) covering $X=\cup U_\alpha$ by Stein spaces where $mK_{U_{\alpha}}$ is trivialized by a holomorphic section $\sigma_\alpha$ as well as smooth functions $\phi_\alpha$ on $U_\alpha$ (i.e. $|\sigma_\alpha|_{h_0^{\otimes m}}=e^{-m\phi_\alpha}$) such that the cocycle $\big(m(\phi_\alpha-\phi_\beta)|_{U_{\alpha\beta}}\big)_{\alpha, \beta}\in H^1(X,\mathrm{PH}_X)$ is mapped to $c_1(mK_X)$ in $H^2(X,\C)$. 

Since the latter vanishes, one can find pluriharmonic functions $\psi_\alpha$ on $U_\alpha$ such that $\phi_\alpha-\phi_\beta=\psi_\alpha-\psi_\beta$ on $U_{\alpha \beta}$. In other words, the functions $\phi_\alpha-\psi_\alpha$ glue to a global function $f$ on $X$ and the local weights $e^{-\psi_\alpha}$ define a smooth hermitian metric $h=e^fh_0$ on $K_X$ such that its Chern curvature $\Theta(K_X, h)$ vanishes. 

The local volume forms $\frac{(\sigma_\alpha \wedge \overline \sigma_\alpha)^{\frac 1m}}{|\sigma_\alpha|_{h^{\otimes m}}^{\frac 2m}}= e^{-\psi_\alpha} (\sigma_\alpha \wedge \overline \sigma_\alpha)^{\frac 1m}$ on $U_{\alpha}^{\rm reg}$ have finite mass thanks to the klt condition (see \cite[Lemma~6.4]{EGZ}), and they glue to define a smooth volume form on $\Xr$, independent of $m$ and the choice of the trivializations $\sigma_\alpha$. We extend the associated measure trivially across $\Xs$ to obtain a Radon measure $\mu_h$.

Now, assume that $(X,\omega_X)$ is a compact Kähler variety of dimension $n$ with klt singularities such that $c_1(X)=0$. Let $h$ be a smooth hermitian metric on $K_X$ with zero Chern curvature and let $\mu_h$ the associated measure, normalized so that $\int_X d\mu_h=\int_X\omega_X^n$. 

\begin{thm}[\cite{EGZ, Paun}]
\label{thm EGZ}
In the setup above, there is a unique function $\varphi \in \PSH(X,\omega_X)\cap L^{\infty}(X)$ with $\sup_X \varphi =0$ such that 
\[(\omega_X+dd^c \varphi)^n = d\mu_h.\]
Moreover, the restriction of  $\omega:=\omega_X+dd^c \varphi$ to $\Xr$ is a genuine Ricci flat Kähler metric on $\Xr$. 
\end{thm}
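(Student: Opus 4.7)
The plan is to solve the complex Monge-Amp\`ere equation $(\omega_X + dd^c \varphi)^n = d\mu_h$ in the pluripotential sense of Bedford-Taylor on $X$, by approximating it on a log resolution $\pi : \wX \to X$ and invoking Kolodziej-type $L^\infty$ estimates. First I would analyze the right-hand side: thanks to the klt condition, if $K_{\wX/X} = \sum a_i E_i$ with $a_i > -1$, then a local computation shows that $\pi^* d\mu_h$ can be written as $g\, \Omega$ where $\Omega$ is a smooth volume form on $\wX$ and $g \in L^p(\wX, \Omega)$ for some $p > 1$ (with $p$ depending on $\min_i (1 + a_i) > 0$). This is the one crucial analytic input supplied by klt; everything downstream is pluripotential theory.

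Next, fix an auxiliary K\"ahler metric $\omega_{\wX}$ on $\wX$ and, for each small $\epsilon, \delta > 0$, consider the Monge-Amp\`ere equation
\[ (\pi^* \omega_X + \epsilon\, \omega_{\wX} + dd^c \psi_{\epsilon,\delta})^n = c_{\epsilon,\delta}\, g_\delta \, \Omega, \qquad \sup_{\wX} \psi_{\epsilon,\delta} = 0, \]
where $g_\delta$ is a smooth approximation of $g$ and $c_{\epsilon,\delta}$ matches total masses. For fixed parameters, Yau's theorem supplies a smooth solution. Kolodziej's $L^\infty$ estimate, in the form refined to big and nef classes, yields a uniform bound $\|\psi_{\epsilon,\delta}\|_{L^\infty} \le C$ depending only on $\|g\|_{L^p}$ and the class $\pi^* [\omega_X]$. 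Letting $\delta \to 0$ and then $\epsilon \to 0$, and using the Bedford-Taylor continuity of the Monge-Amp\`ere operator along uniformly bounded monotonic sequences, one extracts a weak limit $\psi_0$ whose associated positive current has trivial cohomology class along the fibers of $\pi$; since $\psi_0$ is bounded on compact fibers, it follows that $\psi_0 = \pi^* \varphi$ for some $\omega_X$-psh $\varphi \in L^\infty(X)$ solving the desired equation. Uniqueness of $\varphi$ normalized by $\sup_X \varphi = 0$ follows from Dinew's extension of Kolodziej's comparison principle to bounded psh solutions in big classes.

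It remains to verify that $\omega := \omega_X + dd^c \varphi$ restricts to a genuine Ricci-flat K\"ahler metric on $\Xr$. On any relatively compact open $U \Subset \Xr$ the measure $d\mu_h$ is equivalent to a smooth strictly positive volume form, so $\varphi$ locally solves a non-degenerate Monge-Amp\`ere equation $(\omega_X + dd^c \varphi)^n = f\, \omega_X^n$ with $f \in C^\infty(U)$ and $f > 0$. Yau's local Laplacian estimate combined with the uniform $L^\infty$ bound on $\varphi$ gives $\Delta_{\omega_X} \varphi \in L^\infty_{\mathrm{loc}}(\Xr)$; Evans-Krylov then promotes $\varphi$ to $C^{2,\alpha}_{\mathrm{loc}}(\Xr)$, and Schauder bootstrapping to $C^\infty(\Xr)$, so $\omega|_{\Xr}$ is a smooth K\"ahler metric. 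Since the metric $h$ on $K_X$ has vanishing Chern curvature and $\omega^n = d\mu_h$, we conclude on $\Xr$ that $\Ric \om = -dd^c \log \om^n = -\Theta(K_X, h) = 0$.

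The main obstacle is the uniform $L^\infty$ estimate: Kolodziej's original argument requires an ample cohomology class, whereas here the limiting class $\pi^* [\omega_X]$ is only big and nef, and in the K\"ahler setting one does not even have an ambient projective embedding to fall back on. Paun's contribution was precisely to push Kolodziej's capacity-based argument through in this degenerate K\"ahler setting (EGZ handled the projective case), which is the technical heart of the existence statement; everything else --- descent from $\wX$ to $X$, uniqueness, and the local elliptic regularity on $\Xr$ --- is comparatively standard once that estimate is in hand.
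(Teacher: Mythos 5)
Your proposal is correct and follows the standard approach of the cited references \cite{EGZ, Paun}; the paper itself states Theorem~\ref{thm EGZ} as a known preliminary and gives no proof, so there is nothing internal to compare against. The steps you single out --- $L^p$ integrability of the density on a log resolution from the klt condition, degenerate Monge--Amp\`ere on the resolution with Kolodziej-type $L^\infty$ estimates refined to big and nef (and non-projective K\"ahler) classes, descent to $X$ via $\psi_0$ being psh hence constant along the compact $\pi$-fibers, uniqueness by comparison, and local Evans--Krylov/Schauder bootstrap on $\Xr$ --- are exactly the ones those references use.
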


We usually refer to $\omega$ as the singular Ricci flat Kähler metric in $[\omega_X]$. It is not difficult to see that if $f:Y\to X$ is a quasi-étale cover, then $f^*\omega$ is the singular Ricci flat Kähler metric in the Kähler class $f^*[\omega_X]$. 

Properties of $\omega$ near $\Xs$ are still vastly mysterious, aside from orbifold singularities \cite{LiTian19,GP24} and $A_1$ singularities \cite{HS}. 

On the negative side, it is an elementary application of a theorem of Yau that the Kähler manifold $(\Xr, \omega)$ is never geodesically complete unless $X$ is smooth, cf \cite[Proposition~4.2]{GGK}. Actually, one can improve the latter statement by saying that the diameter of $(\Xr, \omega)$ is finite \cite{GPSS23}. 

\section{Polystability, Bochner principle and Albanese map}

It is convenient to set our notation for what follows. 

\begin{set}
\label{setup}
Let $X$ be a compact Kähler variety of dimension $n\ge 2$ with klt singularities such that $c_1(X)=0\in H^2(X,\C)$. Let  $\alpha\in H^2(X,\R)$ be a Kähler class and let $\omega \in \alpha$ be the singular Kähler Ricci flat metric constructed in Theorem~\ref{thm EGZ}. We denote by $\nabla$ the Chern connection on $T_{\Xr}$ with respect to the hermitian metric $h$ induced by $\omega|_{\Xr}$. Finally, we fix a basepoint $x\in \Xr$ and set $G:=\Hol_x(\Xr,\omega)$ which acts on $V:=T_{X,x}$ by unitary transformations wrt $h_x$.
\end{set}

\subsection{The canonical decomposition of $T_X$}

\label{sec can dec}
The starting point of this whole story can reasonably be traced back to the following infinitesimal version of the decomposition theorem, proved independently by \cite{GKP, GSS}. 

\begin{thm}[Canonical decomposition of the tangent sheaf]
\label{thm poly}
In Setup~\ref{setup}, there exists a decomposition of the tangent sheaf 
\[T_X=\bigoplus_{i\in I} \cF_i\]
where the the reflexive sheaves $\cF_i$ satisfy the following properties
\begin{enumerate}[label=$(\roman*)$]
\item $\cF_i$ is stable with respect to $\alpha$ and $c_1(\cF_i)=0\in H^2(X,\R)$. 
\item $\cF_i$ is a foliation, i.e. ${\cF_i}|_{\Xr}$ is stable under Lie bracket. 
\item ${\cF_i}|_{\Xr}$ is parallel with respect to $\nabla$, that is $\nabla \cF_i \subset \cF_i$ over $\Xr$.
\item For any $i\neq j$, ${\cF_i}|_{\Xr}$ and ${\cF_j}|_{\Xr}$ are orthogonal with respect to $h$. 
\end{enumerate}
Moreover, the refined decomposition $T_X= \cO_X^{\oplus k} \oplus \bigoplus_{i\in I'} \cF_i$ none of the $\cF_i$ are trivial for $i\in I'$ is unique up to permutation of the factors. 
\end{thm}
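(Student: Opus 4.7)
The plan is to read off the decomposition directly from the action of the Kähler holonomy on $V=T_{X,x}$. By complete reducibility of the unitary representation of $G=\Hol_x(\Xr,\omega)$ on $V$, one writes $V=\bigoplus_{i\in I} V_i$ as an orthogonal direct sum of irreducible $G$-submodules. Because each $V_i$ is $G$-stable, parallel transport from $x$ to any $y\in\Xr$ produces a well-defined subspace of $T_{\Xr,y}$ (two paths from $x$ to $y$ differ by a loop whose transport lies in $G$, and $G$ preserves $V_i$). This yields a $\nabla$-parallel, $h$-orthogonal decomposition $T_{\Xr}=\bigoplus_{i\in I} E_i$ into holomorphic subbundles. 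Setting $\cF_i:=j_*E_i$, with $j\colon\Xr\hookrightarrow X$ the inclusion, produces reflexive coherent subsheaves of $T_X$ since $\codim_X\Xs\ge 2$; because $j_*$ preserves finite direct sums and $T_X=j_*T_{\Xr}$ by normality, one recovers $T_X=\bigoplus_{i\in I}\cF_i$.

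Properties $(iii)$ and $(iv)$ are built into the construction. For the foliation condition $(ii)$, torsion-freeness of the Chern connection on a Kähler manifold gives $[u,v]=\nabla_u v-\nabla_v u$, so $\nabla$-parallelism of $E_i$ forces Lie-bracket closure, and saturatedness of $\cF_i$ in $T_X$ is automatic because $\cF_i$ is a direct summand. For $(i)$, Ricci-flatness of $\omega$ combined with the parallelism of $E_i$ implies that the induced metric $h|_{E_i}$ is Hermitian--Einstein with vanishing trace of its curvature, so its first Chern form is zero on $\Xr$; a standard extension argument across $\Xs$ (which has codimension at least two) then yields $c_1(\cF_i)=0\in H^2(X,\R)$. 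The stability of $\cF_i$ is the crux: given a saturated subsheaf $\cG\subsetneq\cF_i$ with $\mu_\alpha(\cG)\ge 0=\mu_\alpha(\cF_i)$, one uses the Hermitian--Einstein property and the Bochner principle (Theorem~\ref{bochner}) to produce a $\nabla$-parallel orthogonal projection $\cF_i\to\cF_i$ onto the saturation of $\cG$; its value at $x$ would then be a $G$-equivariant projection onto a proper, nonzero $G$-submodule of $V_i$, contradicting irreducibility.

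The principal technical obstacle lies in this last stability step. Executing it rigorously in the singular and incomplete setting $(\Xr,\omega)$ requires the Bochner principle for reflexive tensor powers on klt varieties and a suitable Hermitian--Einstein/polystability correspondence, together with enough regularity theory to guarantee that a slope-equal destabilizer actually gives rise to a globally defined parallel endomorphism of $T_{\Xr}$ (and not merely of some subbundle on an open set). Once stability is established, uniqueness of the refined decomposition is formal: a polystable reflexive sheaf decomposes uniquely into isotypical components, and grouping the trivial isotypical class into the single summand $\cO_X^{\oplus k}$ leaves the non-trivial stable summands $\cF_i$ determined by their isomorphism classes and multiplicities, hence unique up to permutation of the factors.
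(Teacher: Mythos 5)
Your construction of the candidate decomposition from the holonomy representation is a legitimate starting point, and items $(iii)$ and $(iv)$ do come out for free as you say. However, the argument has several gaps, and the most important one is structural rather than a matter of filling in technicalities.

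\textbf{The stability step reverses the logical flow the paper is forced to take, and the mechanism you invoke is precisely the one that fails.} You propose to take a slope-nonnegative subsheaf $\cG\subset\cF_i$, ``use the Hermitian--Einstein property and the Bochner principle to produce a $\nabla$-parallel orthogonal projection,'' and then read off a contradiction at the fiber over $x$. But the orthogonal projection onto $\cG$ is a priori only a \emph{smooth} endomorphism of $T_{\Xr}$; Theorem~\ref{bochner} applies to \emph{holomorphic} sections of $T_X^{\otimes p}\otimes\Omega_X^{\otimes q}$, so it cannot be applied to the projection until one has already proved that the second fundamental form of $\cG\subset\cF_i$ vanishes. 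Proving that the slope condition forces this vanishing is exactly the easy direction of the Kobayashi--Hitchin correspondence (Griffiths inequality, trace, integrate). The paper records explicitly that this argument cannot be run directly on the incomplete manifold $(\Xr,\omega)$ because one has no control of $\omega$ near $\Xs$ (no integration by parts, no uniform estimates); it is circumvented by passing to a resolution $\wt X\to X$ with smooth approximating Kähler--Einstein metrics and controlling the error terms in the limit. In other words, the paper proves polystability \emph{first} via an approximation scheme on $\wt X$, and the holonomy interpretation you start from is derived \emph{afterwards}. Your route goes in the opposite direction, and the crucial step in between (slope condition $\Rightarrow$ parallel projection) is left unproved; it is not a technical hurdle, it is the main difficulty.

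\textbf{The argument that $c_1(\cF_i)=0$ is also incomplete.} The vanishing of $\operatorname{tr}\Theta(\cF_i,h|_{\cF_i})$ as a form on $\Xr$ does not by itself imply that $c_1(\cF_i)=0$ in $H^2(X,\R)$: the slope $\mu_\alpha$ is computed via a resolution, and a class can be nonzero while having zero Chern form on $\Xr$ (think of classes supported on the exceptional locus). The paper instead derives $c_1(\cF_i)=0$ from semistability of $T_X$ with respect to \emph{all} Kähler classes, which is a cohomological argument that sidesteps the analysis near $\Xs$.

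\textbf{The uniqueness argument is missing the multiplicity-one input.} You argue that a polystable reflexive sheaf decomposes uniquely into isotypical components, and conclude uniqueness up to permutation. But an isotypical component of multiplicity $m\ge 2$ does \emph{not} decompose uniquely into stable pieces (consider $\cO_X^{\oplus 2}$), so you would only get uniqueness up to isomorphism, not the stated uniqueness of the actual subsheaves. The paper's uniqueness rests on the holonomy structure theorem \cite[Theorem~10.38]{Besse}: $G$ splits as $\prod_{i} G_i$ with each $G_i\subset\U(V_i)$, which forces the non-trivial irreducible $G$-summands $V_i$ to be pairwise non-isomorphic as $G$-representations. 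It is this multiplicity-one property, not polystability alone, that makes the refined decomposition unique up to permutation.
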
 

\begin{rem}
\label{rem strong}
From this enhanced polystability property of $T_X$ with respect to an arbitrary Kähler class $\alpha$, it follows that the sheaves $\cF_i$ above are actually stable with respect to {\it any} Kähler class, cf e.g. the second paragraph of \cite[p270]{CGGN}.
\end{rem}

The statement in \cite{GKP} actually contains an additional property (that the sheaves $\cF_i$ can be assumed to be strongly stable up to taking a quasi-étale cover). However, it holds only for projective varieties with canonical singularities and does not cover the canonical aspect of the decomposition nor the metric features of the above result.  Semistability of $T_X$ has been known for a long time: this is a consequence of Miyaoka's generic semipositivity theorem when $X$ is projective with canonical singularities and was proved by \cite{Enoki} in the Kähler setting. However, the crucial polystability property was out of reach at that time; in the analytic setting, this has to do with the fact that the existence of $\omega$ was unknown then. \\

\begin{proof}[A few ideas about the proof]
We will not give the details of the proof of Theorem~\ref{thm poly}, but only the rough strategy, glossing over any technicality. 

\emph{Smooth case.} Assume for a moment that $X$ is smooth. The (classical) idea is to notice that $h$ is a Hermite-Einstein metric on $T_X$ with respect to $\omega$ and then appeal to the easy direction of the Kobayashi-Hitchin correspondence.  More precisely, a standard consequence of Griffiths inequality is that if one endows a subbundle $F\subset T_X$ with the induced metric, then one has pointwise 
\[ c_1(F,h|_F) \wedge \omega^{n-1}  \le  \mathrm{pr}_F (c_1(T_X,h)|_F) \wedge \omega^{n-1}\]
and the inequality is strict at a given point iff the second fundamental form of $(F,h_F)\subset (T_X, h)$ is not zero. 
By the Hermite-Einstein condition and since $c_1(T_X)=0$, the RHS identically zero. Tracing out and integrating will then yield that the slope of $F$ with respect to $\alpha$ is nonpositive. If it vanishes, then so does the second fundamental form, hence the orthogonal complement $F^\perp$ of $F$ with respect to $h$ is holomorphic again, showing the existence of a polystable decomposition $T_X=\oplus F_i$ of $T_X$ into pairwise orthogonal parallel stable subbundles of slope zero. The fact that $c_1(F_i)$ vanishes can be derived from the fact that $T_X$ is semistable with respect to {\it any} Kähler class as follows. If say $c_1(F_1)\neq 0$, then there exists a Kähler class $\alpha$ such that $c_1(F_1)\cdot \alpha^{n-1}>0$, which contradicts the inequalities $c_1(F_i)\cdot \alpha^{n-1}\ge 0$ for all $i$ combined with the identity $c_1(T_X)=\sum_i c_1(F_i)$. Finally, since $\nabla$ is torsion-free and $F_i$ is parallel, the latter is automatically stable under Lie bracket.

\emph{Singular case.} When $X$ is singular, one would like to do the same reasoning over $\Xr$ but unfortunately this does not seem to be possible because of our lack of understanding of the behavior of $\omega$ near the singularities. Instead, one has to work on a resolution of singularities with smooth, approximate Kähler-Einstein metrics and control the error term along the approximation 

\emph{Uniqueness.} The last property to check is that the decomposition is unique up to permutation of the factors (and not just up to isomorphism), which is a bit subtle. First, observe that parallel transport induces a one-to-one correspondence between $G$-invariant subspaces of $V$ and parallel subsheaves of $T_X$, so we can translate the problem in terms of decompositions of the $G$-representation $V$. Then, the key point is to show that given an orthogonal decomposition $V:=V_0 \oplus \oplus_{i=1}^k V_i$ into irreducible representations, it is unique up to permutation of the factors. Here $V_0=V^G$ and the $V_i$'s are nontrivial irreducible representations of the unitary group $G$. This property is a consequence of the fact that $G$ itself splits accordingly, that is $G=\prod_{i=1}^k G_i$ where $G_i \subset \U(V_i)$ by \cite[Theorem~10.38]{Besse}. From there, uniqueness of the decomposition follows easily, see e.g. \cite[Remark~2.10]{GGK}.
\end{proof}

\subsection{Bochner principle}
As we saw in the introduction, the Bochner principle is a key tool in the decomposition theorem in the smooth setting. That latter states that any holomorphic tensor $\sigma \in H^0(X, T_X^{\otimes p}\otimes \Omega_X^{\otimes q})$ is parallel with respect to any Kähler Ricci flat metric $\omega$, that is $\nabla \sigma =0$. The proof is a straightforward consequence of the Bochner identity
\[\Delta_\omega |\sigma|^2 = |\nabla \sigma|^2\]
since the LHS integrates to zero by Stokes formula. Although the proof does not go through as such in the singular setting, the statement itself remains valid. 

\begin{thm}[Bochner principle]
\label{bochner}
In Setup~\ref{setup}, let $\sigma \in H^0(\Xr, T_X^{\otimes p}\otimes \Omega_X^{\otimes q})$ be a holomorphic tensor, where $p,q\ge 0$ are nonnegative integers. Then, one has 
\[\nabla \sigma = 0 \quad \mbox{on } \Xr.\]
In particular, there is a one-to-one correspondence between hololomorphic tensors on $\Xr$ of type $(p,q)$ and $G$-invariant vectors in $V^{\otimes p} \otimes (V^*)^{\otimes q}$. 
\end{thm}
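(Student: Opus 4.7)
The plan is to adapt the classical Bochner argument to the singular setting by combining the pointwise identity available on $\Xr$ with a careful integration by parts against cut-off functions that avoid the singular locus. Since $\om$ is a smooth Kähler Ricci-flat metric on $\Xr$, the standard computation with a Kähler connection of vanishing Ricci curvature yields the pointwise identity $\Delta_\om |\sigma|^2_\om = |\nabla \sigma|^2_\om$ on all of $\Xr$. In the smooth compact case one concludes immediately by integrating and applying Stokes; the obstruction here is that $\Xr$ is neither compact nor complete, so the boundary contribution at $\Xs$ must be neutralized.

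First I would choose a sequence of smooth cut-off functions $\chi_\ep : X \to [0,1]$ vanishing in a neighborhood of $\Xs$, equal to $1$ outside a shrinking neighborhood, and satisfying $\int_{\Xr} |d\chi_\ep|^2_\om \, \om^n \to 0$ as $\ep \to 0$. Such a sequence exists because $\Xs$ has complex codimension at least two and $\om$ has bounded potential with finite volume, so the $H^1$-capacity of $\Xs$ with respect to $\om$ vanishes. Multiplying the pointwise identity by $\chi_\ep^2$ and integrating by parts gives
\[\int_{\Xr} \chi_\ep^2 |\nabla \sigma|^2_\om \, \om^n = -2 \int_{\Xr} \chi_\ep \, \langle d\chi_\ep, d|\sigma|^2_\om\rangle_\om \, \om^n,\]
and combining the pointwise bound $|d|\sigma|^2_\om| \le 2 |\nabla \sigma|_\om |\sigma|_\om$ with Cauchy--Schwarz, absorbing the $|\nabla\sigma|$-term on the left, yields
\[\int_{\Xr} \chi_\ep^2 |\nabla \sigma|^2_\om \, \om^n \le C \int_{\Xr} |d\chi_\ep|^2_\om \, |\sigma|^2_\om \, \om^n\]
for an absolute constant $C$.

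The crux is then to show that $|\sigma|^2_\om \in L^\infty(\Xr)$; this is the main technical obstacle, and the step where the singularities of $X$ genuinely enter. I would approach it by passing to a log resolution $\pi : \wX \to X$, constructing a family of smooth Kähler metrics $\om_\delta$ on $\wX$ by solving perturbed Monge--Amp\`ere equations degenerating to $\pi^*\om$ away from $\mathrm{Exc}(\pi)$, and applying the classical smooth Bochner identity on the compact manifold $(\wX, \om_\delta)$. Since $\pi^*\sigma$ defines a holomorphic tensor on $\wX \setminus \mathrm{Exc}(\pi)$ whose squared norm is subharmonic for each $\om_\delta$, uniform Ko\l{}odziej-type estimates on the potentials, together with a mean-value inequality, provide a uniform $L^\infty$ bound for $|\pi^*\sigma|^2_{\om_\delta}$ that persists in the limit $\delta \to 0$, giving $|\sigma|^2_\om \in L^\infty(\Xr)$.

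Granted the $L^\infty$ bound, the right-hand side of the displayed inequality tends to zero, and Fatou's lemma forces $\int_{\Xr} |\nabla \sigma|^2_\om \, \om^n = 0$, whence $\nabla \sigma \equiv 0$ on $\Xr$ by continuity. For the correspondence with $G$-invariant vectors, parallel transport around loops at $x$ shows that $\sigma_x \in V^{\otimes p} \otimes (V^*)^{\otimes q}$ is $G$-invariant, and conversely any $G$-invariant vector at $x$ extends by parallel transport to a well-defined global parallel section of $T_X^{\otimes p} \otimes \Omega_X^{\otimes q}$ over $\Xr$ which is automatically holomorphic since the $(0,1)$-part of the Chern connection is $\db$.
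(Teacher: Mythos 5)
Your overall strategy — pass to a resolution and use approximate Kähler--Einstein metrics — is in the right spirit, and you correctly identify the $L^\infty$ bound on $|\sigma|^2_\omega$ as the crux. However, the way you assemble these pieces has two genuine gaps, and the first of them is exactly the pitfall the paper warns against: the paper states explicitly that "one would like to do the same reasoning over $\Xr$ but unfortunately this does not seem to be possible because of our lack of understanding of the behavior of $\omega$ near the singularities." Your cut-off argument on $(\Xr,\omega)$ requires that the $H^1$-capacity of $\Xs$ with respect to the \emph{singular Ricci-flat metric} $\omega$ vanish, i.e. that $\int d\chi_\ep\wedge d^c\chi_\ep\wedge\omega^{n-1}\to 0$. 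Bounded potentials and finite total volume of $\omega^n$ do not deliver this: the relevant quantity involves $\omega^{n-1}$ and could concentrate mass near $\Xs$, and since nothing is known about lower or upper bounds of $\omega$ by a fixed background metric near $\Xs$, there is no way to control $|d\chi_\ep|^2_\omega$ there. This is not a technicality one can wave away; the whole reason the argument is run on a resolution with smooth approximants is that one cannot do estimates against $\omega$ on $\Xr$ directly.

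The second gap is in your proof of the $L^\infty$ bound itself. You assert that $\pi^*\sigma$ "defines a holomorphic tensor on $\wX\setminus\mathrm{Exc}(\pi)$ whose squared norm is subharmonic for each $\omega_\delta$," and then invoke a mean-value inequality on the compact manifold $\wX$. But (a) the extension theorem (Theorem~\ref{thm extension forms}) only applies to pure reflexive differentials $\Omega_X^{[p]}$, not to mixed tensors $T_X^{\otimes p}\otimes\Omega_X^{\otimes q}$ with $p>0$; for such tensors $\pi^*\sigma$ need not extend holomorphically across $\mathrm{Exc}(\pi)$, and $|\pi^*\sigma|^2_{\omega_\delta}$ can blow up along the exceptional divisor, so there is no function on all of $\wX$ to which a mean-value inequality applies; and (b) the approximate metrics $\omega_\delta$ on $\wX$ are \emph{not} Ricci-flat (they cannot be, since $c_1(\wX)\neq 0$), so the Bochner formula for $|\pi^*\sigma|^2_{\omega_\delta}$ carries a curvature term $-\langle\mathrm{tr}_{\omega_\delta}i\Theta\cdot\pi^*\sigma,\pi^*\sigma\rangle$ of indefinite sign, and $|\pi^*\sigma|^2_{\omega_\delta}$ is not subharmonic. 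Controlling exactly this error term as $\delta\to 0$ — together with the degeneration of $\pi^*\sigma$ along $\mathrm{Exc}(\pi)$ — is the heart of the proof in \cite{CGGN}, and your sketch leaves it entirely to "uniform Ko\l{}odziej-type estimates," which address the potentials but not either of these two issues.
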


From a technical point of view, the proof resembles that of Theorem~\ref{thm poly} but some non-trivial additional inputs are necessary. It was proved first somehow indirectly in \cite[Theorem~8.2]{GGK} in the projective case and a simplified proof encompassing the Kähler case was later provided in \cite[Theorem~A]{CGGN}. We refer to \cite[Remark~3.5]{CGGN} for a comparison of the two proofs.

\subsection{\'Etale triviality of the Albanese map}

 Let $X$ be a normal compact variety with rational singularities, let $\pi:\wX\to X$ be a resolution of singularities, and set $\mathrm{Alb}(X):=\mathrm{Alb}(\wX)$. If $F$ is a fiber of $\pi$, then $H^1(F,\Z)=0$ hence the restriction of the Albanese map of $\wX$ to $F$ lifts to the universal cover of $\mathrm{Alb}(\wX)$, hence is constant.  Therefore the Albanese map of $\wX$ factors through $\pi$, which defines the Albanese map 
 \[\alpha :X\to \mathrm{Alb}(X)\] whose universal property is that any map from $X$ to a complex torus factors through $\alpha$. We refer to \cite[Theorem~4.3]{Graf18} and the references therein for the details.  Thanks to Theorem~\ref{thm extension forms}, we have 
 \[\dim \mathrm{Alb}(X)=h^0(X, \Omega_X^{[1]})=h^1(X,\cO_X)=:q(X).\]
 A first straightforward consequence of the Bochner principle is that if $c_1(X)=0$, then $q(X) \le \dim X$. In particular, the augmented irregularity of $X$ 
 \[\widetilde q(X):=\sup \{h^0(Y, \Omega_Y^{[1]}), \,\,Y\to X \, \mbox{quasi-étale}\}\]
 is finite, bounded above by $\dim X$. 
 
  We can actually say much more about the Albanese map, if one assumes additionally that $X$ has canonical singularities.  
 
 \begin{thm}[Albanese splits after base change]
 \label{alb}
 Let $X$ be a compact Kähler variety with canonical singularities such that $c_1(X)=0\in H^2(X,\R)$. Then there exists a finite étale cover $f:B\to \mathrm{Alb}(X)$ such that $X\times_A B$ is isomorphic to $F\times B$ over $B$, where $F$ is connected. In particular, $\alpha$ is a surjective analytic fiber bundle with connected fibers. 
 \end{thm}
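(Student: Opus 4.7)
The plan is to leverage the canonical decomposition of $T_X$ from Theorem~\ref{thm poly} to produce an integrable holomorphic $\C^k$-action on $X$ that globalizes the Albanese fibration. Applying Theorem~\ref{thm poly}, I would write $T_X = \cO_X^{\oplus k} \oplus \bigoplus_{i \in I'} \cF_i$ with the $\cF_i$ nontrivial. The Bochner principle (Theorem~\ref{bochner}) together with unitarity of $G$ identifies $h^0(X, T_X) = \dim V^G = \dim (V^*)^G = h^0(X, \Omega_X^{[1]})$, which by Corollary~\ref{hodge duality} equals $q(X) = \dim A$. In particular $k = \dim A$.

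Next, I would choose a basis $v_1, \ldots, v_k$ of $H^0(X, T_X)$. These are $\nabla$-parallel on $\Xr$ by Bochner, and the torsion-freeness of $\nabla$ forces $[v_i, v_j] = 0$. Compactness of $X$ renders them complete, so they integrate to a holomorphic action $\mu : \C^k \to \Aut^0(X)$. By the universal property of $\alpha$, the induced action on $A$ is translation through the universal cover $p : \C^k \to A$, whose kernel is the period lattice $\Lambda$. Since the $v_i$ span the trivial summand $\cO_X^{\oplus k}$ at every point, the differential of the orbit map is injective on $\Xr$; hence $\alpha|_{\Xr}$ is a submersion and $\alpha$ is surjective.

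I would then fix a connected component $F$ of $\alpha^{-1}(0)$ and form the orbit map $\Phi : \C^k \times F \to X$, $(t,x) \mapsto \mu(t)(x)$. This map covers $p$, is surjective, and is invariant under the diagonal $\Lambda$-action (translation on $\C^k$ and $\mu$ on $F$); a dimension and fiber comparison then gives a biholomorphism $(\C^k \times F)/\Lambda \xrightarrow{\sim} X$. The final task is to show the $\Lambda$-action on $F$ factors through a finite quotient. Set $\Gamma$ to be the kernel of $\mu|_\Lambda$ acting on $F$; the commutation relation of $\mu$ with itself together with the surjectivity of $\Phi$ forces $\Gamma$ to coincide with the kernel of the full $\C^k$-action on $X$, so $T := \mu(\C^k) \cong \C^k/\Gamma$, and the desired finiteness $[\Lambda : \Gamma] < \infty$ is equivalent to $T$ being compact. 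To see this, I would exploit that the real and imaginary parts of the $v_i$ are Killing fields for $\omega$ on $\Xr$, so $T$ preserves the Kähler class $\alpha \in H^2(X, \R)$; by the Fujiki--Lieberman structure theorem, the identity component of the $\alpha$-preserving subgroup of $\Aut(X)$ is an extension of an abelian variety by an affine factor, and the Killing constraint excludes non-compact additive or multiplicative directions. Setting $B := \C^k/\Gamma$ then gives a finite étale Galois cover $f : B \to A$ with group $\Lambda/\Gamma$, and since $\Gamma$ acts trivially on $F$ one obtains $X \times_A B \cong B \times F$ over $B$.

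The hardest step is precisely the finiteness of $\Lambda/\Gamma$, equivalently the compactness of the image $T \subset \Aut^0(X)$ of the $\C^k$-action; this is where the subtle interplay between the Killing property inherited from $\omega$ and the structure theory of $\Aut^0$ of a compact Kähler variety enters. Secondary technicalities include the analytic normality of the intermediate quotient $(\C^k \times F)/\Lambda$ and the connectedness of the chosen fiber $F$, the latter requiring in the klt setting a Kawamata-type connectedness statement for Albanese maps—both standard in spirit, but needing care in the singular Kähler category.
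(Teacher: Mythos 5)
Your overall strategy — produce an integrable $\C^k$-action from the trivial factor of $T_X$, descend it to $\Alb(X)$, and split — is close in spirit to the paper's proof, which also works with $\Aut^\circ(X)$ and the vector fields spanning $\cO_X^{\oplus k}$. But your treatment of the step you yourself flag as the hardest one is where the argument genuinely breaks down.

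To show that $T = \mu(\C^k) \subset \Aut^\circ(X)$ is compact (equivalently, that the linear part of $\Aut^\circ(X)$ is trivial), you appeal to the Fujiki–Lieberman structure theorem plus the claim that "the Killing constraint excludes non-compact additive or multiplicative directions." Neither half of this does the job. First, the fact that $T$ preserves the Kähler class is automatic for any connected subgroup of $\Aut(X)$ and carries no content — every element of $\Aut^\circ(X)$ acts trivially on $H^2(X,\R)$ — so the Fujiki–Lieberman decomposition into (linear algebraic part) $\times$ (torus) applies equally well whether or not $T$ is compact and does not by itself rule out the linear part. Second, the real and imaginary parts of the $v_i$ being Killing for $\omega$ only on $\Xr$ does not imply anything about compactness of the group they generate: $(\Xr,\omega)$ is incomplete (the paper even stresses this), so its isometry group need not be compact and the usual "one-parameter groups of isometries have compact closure" heuristic fails. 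What actually kills the linear part of $\Aut^\circ(X)$ is that $X$ is not uniruled, which is precisely where the \emph{canonical} (as opposed to merely klt) hypothesis is used, via Fujiki's result that $\Aut^\circ(X)$ is a compact complex torus for a non-uniruled compact Kähler variety. Your proposal never invokes non-uniruledness and in fact never uses the canonical singularity assumption, which is a warning sign since the statement is known to fail for log canonical singularities (Theorem~\ref{lc}).

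Once $\Aut^\circ(X)$ is known to be a torus, the Bochner-principle pairing $H^0(X,T_X)\times H^0(X,\Omega_X^{[1]})\to\C$ being perfect makes $\varphi\colon\Aut^\circ(X)\to\Alb(X)$ a finite étale cover directly, with no need to pass through $\C^k$ or quotient lattices; the splitting $X\times_A B\cong F\times B$ is then the elementary $(x,g)\mapsto(g(x),g)$ identification. Your deferral of the connectedness of $F$ to "a Kawamata-type connectedness statement" is also heavier machinery than needed: the paper gets it for free from the Stein factorization of $\alpha$ plus the universal property of the Albanese map.
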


The above theorem is proved in \cite[Corollary~8.4]{Kawa85} when $X$ is projective and in \cite[Theorem~4.1]{CGGN} when $X$ is merely Kähler. We refer to the remark below the theorem in {\it loc. cit.} for earlier partial results. 

\begin{proof}
By the universal property of $\alpha$, any element $g\in \Aut(X)$ induces an automorphism $\varphi_g$ of $\Alb(X)$ and, moreover, the linear part of $\varphi_g$ is the identity if $g\in \Aut^\circ(X)$, hence $\varphi_g$ is a translation by an element $\varphi(g)\in \Alb(X)$. We thus have a morphism of complex Lie groups $\varphi: \Aut^\circ(X)\to \Alb(X)$. Since $X$ has only canonical singularities, $X$ is not uniruled. Let us briefly recall this standard argument. If $X$ were uniruled, then so would be a resolution of singularities $\widetilde X\to X$. Up to blowing up further, one can assume that $\widetilde X$ admits a fibration in rational curves. If $F\simeq \mathbb P^1$ is a general fiber, the adjunction formula shows that $K_{\widetilde X}$ effective implies $K_F$ effective, a contradiction. 

Therefore one can apply \cite[Proposition~5.10]{Fuj78} to see that $\Aut^\circ(X)$ is a complex torus. One can check that the induced map between Lie algebras $d\varphi: H^0(X, T_X)\to H^0(X, \Omega_X^{[1]})^*$ is the canonical pairing map, which is a perfect pairing as a direct consequence of Bochner principle. Hence $\varphi$ is a finite étale cover. 

Moreover, the same computation shows that the differential of $\alpha$ has a maximal rank at any point $x\in \Xr$; in particular $\alpha$ is surjective. Set $A:=\Alb(X)$, $B:=\Aut^\circ(X)$, $F:=\alpha^{-1}(0)$ and $Y:=X\times_A B$. The map
$F\times B \to X\times B$ sending $(x,g)$ to $(g(x),g)$ induces a map 
\[\psi:F\times B\to Y\] 
commuting with the projection to $B$ and which admits the inverse $\psi^{-1}:(x,g)\mapsto (g^{-1}(x), g)$. It therefore remains to show that $F$ is connected. Since $\alpha$ is locally trivial, its Stein factorization $X\to Z\overset{p}{\to} A$ is such that $p$ is étale, hence $Z$ is a torus. By the universal property of $\alpha$, we get a map $q:A\to Z$ which is readily seen to be an inverse of $p$. This concludes the proof of the theorem.   
\end{proof}

As we saw in the proof, it is important to restrict to canonical singularities. Fortunately, the abundance conjecture is known in numerical dimension zero thanks to \cite[Corollary~4.9]{Nak} in the projective case and results of B. Wang in the Kähler case, cf \cite[Corollary~1.18]{JHM2}. More precisely, if $X$ is a compact Kähler variety with klt singularities such that $c_1(X)=0\in H^2(X, \R)$, then $K_X$ is a torsion $\Q$-line bundle. Therefore, one can take an index-one cover \cite[Definition~5.19]{KM}, which yields the following.  

\begin{thm}
\label{abondance}
Let $X$ be a compact Kähler variety with klt singularities such that $c_1(X)=0\in H^2(X, \R)$. Then there exists a quasi-étale cover $f:Y\to X$ such that $Y$ has canonical singularities and $K_Y\sim \cO_Y$. 
\end{thm}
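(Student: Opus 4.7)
The plan is to follow the strategy sketched in the paragraph preceding the statement. First, I would use the cited abundance-type results to upgrade the hypothesis $c_1(X) = 0 \in H^2(X,\R)$ to the stronger statement that $K_X$ is a torsion element of the Picard group. Since $c_1(K_X) = -c_1(X) = 0$, the canonical class is numerically trivial, hence pseudo-effective of numerical dimension zero. Nakayama's theorem \cite[Corollary~4.9]{Nak} in the projective case, and its Kähler counterpart due to B. Wang \cite[Corollary~1.18]{JHM2}, then yield an integer $m \ge 1$ such that $mK_X \sim \cO_X$ as line bundles on $X$.

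Once such a trivialization is fixed, I would take the associated index-one cover $f : Y \to X$ as in \cite[Definition~5.19]{KM}, namely
\[Y := \mathbf{Spec}_X\Bigl( \bigoplus_{i=0}^{m-1} \cO_X(iK_X) \Bigr),\]
with algebra structure induced by the chosen isomorphism $\cO_X(mK_X) \simeq \cO_X$. By construction this is a cyclic Galois cover of degree $m$. It is étale over $\Xr$, because $K_X$ restricts to an honest line bundle there, with respect to which the construction is the classical cyclic cover of a trivialized line bundle. Since $X \setminus \Xr$ has codimension at least two, Nagata's purity of the branch locus recalled in the excerpt then tells us that $f$ is quasi-étale. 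Standard properties of the construction give that $Y$ is a normal variety and that $K_Y = f^{[*]}K_X$, so in particular $K_Y \sim \cO_Y$.

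It remains to check that $Y$ has canonical singularities. I would combine two standard facts. First, klt-ness is preserved under quasi-étale covers: since $f$ is unramified in codimension one, one has $K_Y = f^{[*]} K_X$ with no ramification contribution, and discrepancies of exceptional divisors on a log resolution of $Y$ can be read off, via a compatible resolution, from those over $X$, hence remain strictly greater than $-1$. Thus $Y$ is klt. Second, since $K_Y$ is Cartier (even linearly trivial), $Y$ is Gorenstein, and on a Gorenstein variety all discrepancies are integers. Combining $a_i > -1$ with the integrality forces $a_i \ge 0$, which is exactly the canonical condition.

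I do not expect a genuine obstacle here: the content of the theorem is really to record that once the deep abundance-in-numerical-dimension-zero input is available (both in the projective and Kähler settings, as cited), the passage from klt with $c_1 = 0$ to canonical with trivial canonical bundle reduces to the formal index-one cover construction together with the well-known preservation of klt singularities under quasi-étale covers and the elementary observation that klt plus Gorenstein equals canonical.
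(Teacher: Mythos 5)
Your proof is correct and follows the same route as the paper: abundance in numerical dimension zero (Nakayama in the projective case, Wang in the Kähler case) gives that $K_X$ is torsion, and the index-one cover of \cite[Definition~5.19]{KM} then produces the desired quasi-étale cover. The paper states this in a single compressed paragraph; you have merely supplied the standard verifications (the cover is étale over $\Xr$ hence quasi-étale by purity, klt is preserved under quasi-étale covers, and Cartier canonical divisor plus klt forces canonical) that \cite{KM} already packages with the construction.
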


So if we summarize, starting from $X$ as in Setup~\ref{setup}, one can find a quasi-étale cover $Y\to X$ with canonical singularities thanks to Theorem~\ref{abondance}. If a quasi-étale cover $Y'\to Y$ admits non-zero holomorphic $1$-forms (that is, $\widetilde q(Y)\neq 0$), then a further quasi-étale cover $Y''\to Y$ splits off a torus by Theorem~\ref{alb}, that is $Y''=T\times Z$ where $T$ is a complex torus. Iterating the process with $Z$, we end up with the following statement. 

\begin{prop}[Torus cover]
\label{torus cover}
Let $X$ as in Setup~\ref{setup}. There exists a quasi-étale cover $X'\to X$ such that
\[X'\simeq T\times Z\]
where $T$ is a complex torus, $Z$ has canonical singularities with $K_Z\sim \cO_Z$  and $\widetilde q(Z)=0$.
\end{prop}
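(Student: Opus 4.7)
The plan is to iterate the torus splitting provided by Theorem~\ref{alb}, starting from the normal form given by Theorem~\ref{abondance}, and to use the dimension of the non-torus factor as a strictly decreasing invariant that forces termination.

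First I would apply Theorem~\ref{abondance} to $X$ and replace $X$ by the resulting quasi-étale cover, so that from now on $X$ has canonical singularities and $K_X\sim \cO_X$. Both of these properties are preserved under any further quasi-étale cover (canonical singularities via the standard discrepancy calculation, and triviality of $K$ because $K$ pulls back along any cover), and they also descend from a product $F\times B$ with $B$ a torus to the factor $F$, by adjunction along the projection to $B$ together with the triviality of $\Omega_B$. These two stability statements are the glue that will make the iteration work.

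If $\wt q(X)=0$ we are done with $T=\{0\}$ and $Z=X$. Otherwise, pick a quasi-étale cover $W\to X$ with $q(W)>0$; it still has canonical singularities and trivial canonical bundle, so Theorem~\ref{alb} produces an étale cover $B\to \Alb(W)$ such that $W\times_{\Alb(W)}B\simeq F\times B$, with $B$ a complex torus and $F$ connected. The fiber product is étale over $W$, hence quasi-étale over $X$, and by the previous paragraph $F$ inherits canonical singularities and $K_F\sim \cO_F$. Moreover $\dim F<\dim W=\dim X$ since $\dim B=q(W)\ge 1$.

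I would then recurse on $F$ in place of $X$. Any quasi-étale cover $F'\to F$ obtained at a later stage pulls back to the quasi-étale cover $F'\times B\to F\times B$, which composes with the covers already built to yield a single quasi-étale cover of the original $X$. Since the dimension of the non-torus factor strictly decreases at each step, the recursion terminates after at most $\dim X$ iterations, at a factor $Z$ with $\wt q(Z)=0$. Gathering the accumulated torus factors into a single complex torus $T$ (a finite product of tori is a torus) yields the desired $X'\simeq T\times Z$. The only delicate point of the argument is the bookkeeping: one must verify that composing and producting quasi-étale covers remains quasi-étale over $X$, and that canonical singularities together with $K\sim 0$ descend to the non-torus factors at every stage. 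Once these are granted, termination is automatic from the strict decrease in dimension.
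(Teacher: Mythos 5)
Your proof is correct and follows essentially the same route as the paper: first reduce via Theorem~\ref{abondance} to canonical singularities with trivial canonical bundle, then iterate Theorem~\ref{alb} to split off torus factors, with the dimension of the non-torus part strictly decreasing at each stage. The paper states this iteration tersely; your write-up merely makes explicit the bookkeeping (stability of the hypotheses under quasi-étale covers, descent of canonical/$K\sim\cO$ to the non-torus factor, and closure of quasi-étale covers under composition and products), all of which is correct and implicit in the paper's argument.
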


\section{Holonomy and irreducible factors}
\subsection{Computing the restricted holonomy}
\label{sec rest hol}
In Setup~\ref{setup}, we have two decompositions
\[V=V_0\oplus \bigoplus_{i\in I} V_i, \quad \mbox{and} \quad V=W_0\oplus \bigoplus_{j\in J} W_j\]
corresponding to the canonical decomposition of $V$ into irreducible representations for $G$ (resp $G^\circ$) and where $V_0=V^G$ (resp. $W_0=V^{G^\circ}$). Moreover, we have $G=\prod_{i\in I} G_i$ where $G_i \subset \U(V_i)$ by \cite[Theorem~10.38]{Besse} and $G^\circ=\prod_{j\in J} G_j^\circ$ where $G_j^\circ \subset \U(W_j)$ by \cite[Corollary~10.41]{Besse} .

Clearly, $V_0\subset W_0$. Next, since $G^\circ$ is normal in $G$, $G$ preserves $W_0$ and for any $j\in J, g\in G$, $g(W_j)$ is stable and irreducible under the action of $G^\circ$, that is $g(W_j)=W_k$ for some $k=k(g)$. In other words, $G$ permutes the $W_j$ for $j\in J$ hence induces a morphism $s:G/G^\circ \to \mathfrak{S}_J$. The kernel of the map $\pi_1(\Xr)\to \mathfrak{S}_J$ obtained by composing $s$ with \eqref{pi 1 hol} yields a quasi-étale cover 
\[f:X^{\rm hol}\to X\]
 such that the holonomy $G'$ of $f^*\omega$ stabilizes every nontrivial irreducible $G'^\circ$-representation. The cover $X^{\rm hol}\to X$ will be referred to as weak holonomy cover, in accordance with \cite[Section~7.2]{GGK}.

  From now on, we replace $X$ with $X^{\rm hol}$. The conclusion of the discussion is that there is a subset $I_0\subset I$ and a bijection $\tau:J\to I\setminus I_0$ such that
\[W_0=V_0\oplus \bigoplus_{i\in I_0}V_i, \quad \mbox{and} \quad W_j=V_{\tau(j)}, \,\, \forall j\in J.\]

Let $U\subset \Xr$ be a small simply connected neighborhood of $x$. The natural map $\Hol_x(U,\omega)\to \Hol_x(\Xr,\omega)$ yields an isomorphism 
\begin{equation}
\label{real an}
\Hol_x(U, \omega) \simeq \Holo_x(\Xr,\omega)=G^\circ
\end{equation}
since $\omega$ is real analytic \cite[II.Theorem~10.8]{KN}. Next, by \cite[Theorem~10.38]{Besse} (cf also \cite[Proposition~3.2.4]{Joyce}), there is a splitting
\begin{equation}
\label{hol dec}
(U,\omega)\simeq (U_0, \omega_0) \times \prod_{j\in J} (U_j,\omega_j)
\end{equation}
where $(U_0, \omega_0)$ is flat and, writing $x=(x_0, (x_j)_{j\in J})$, the holonomy group $\Hol_{x_j}(U_j,\omega_j)$ is irreducible for any $j\in J$. A priori it is not obvious that the decomposition \eqref{hol dec} is parametrized by the set $J$ previously introduced, but this is a consequence of the identification \eqref{real an} and  \cite[Proposition~3.2.1]{Joyce}. In particular, we find $G_j^\circ \simeq \Hol_{x_j}(U_j,\omega_j)$.

Let $n_j:=\dim_\C U_j$; the Kähler manifold $(U_j, \omega_j)$ is Ricci flat but non locally symmetric (otherwise it would be flat, hence have trivial holonomy). Therefore, Berger-Simons classification \cite{Besse} shows that one has either
\begin{equation}
\label{rest hol}
G_j^\circ \simeq \SU(n_j), \quad \mbox{or} \quad n_j=2m_j \,\, \mbox{and} \,\, G_j^\circ \simeq \Sp(m_j)
\end{equation}
where $\Sp(m_j)=\Sp(n_j,\C)\cap \U(n_j)$ is the unitary symplectic group. 

In summary, the decomposition $V=W_0\oplus \bigoplus_{j\in J} W_j$ is $G$-stable and $G$ splits as $G_0\times \prod_{j\in J}G_j$ where $G_0$ is discrete and the identity component of $G_j$ is nothing but the group $G_j^\circ$ from above. Parallel transport yields parallel subbundles $F_0, (F_j)_{j\in J}$ of $T_{\Xr}$ whose saturation induces reflexive subsheaves $\cF_0, (\cF_j)_{j\in J}$ of $T_X$ such that 
\begin{equation}
\label{pre dec}
T_X=\cF_0\oplus \bigoplus_{j\in J} \cF_j
\end{equation}
whose holonomy will be $G_0$ and $G_j$ respectively. In particular, $\cF_0$ is hermitian flat. Thanks to \eqref{rest hol}, we know $G^\circ$, which is enough to derive interesting consequences like the following result. 

\begin{lem}
\label{sym stable}
In the decomposition~\ref{pre dec} above, let us consider a factor $\cF_j$ with $j\in J$. Then for any integer $k\ge 1$, the sheaf $\Sk\cF_j$ is strongly stable with respect to $\alpha$.
\end{lem}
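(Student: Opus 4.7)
The plan is to reduce stability on an arbitrary quasi-étale cover to an irreducibility statement for a concrete holonomy representation, and then to invoke the same singular Hermite-Einstein technology used in the proof of Theorem~\ref{thm poly}.

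First, I would fix a quasi-étale cover $f:Y\to X$ and set $\cG:=f^{[*]}(S^{[k]}\cF_j)\simeq S^{[k]}(f^{[*]}\cF_j)$. The variety $Y$ still satisfies Setup~\ref{setup}, and $f^*\omega$ is the singular Ricci-flat metric in the class $f^*\alpha$. Since $f$ is étale over $\Xr$, the restricted holonomy is preserved, so by~\eqref{real an} and~\eqref{rest hol} the identity component of the holonomy of the parallel subbundle $f^{[*]}\cF_j|_{Y_{\rm reg}}$ is again $G_j^\circ$, acting on $W_j$ via its standard representation as $\SU(n_j)$ or $\Sp(m_j)$. The key representation-theoretic input is then that $S^kW_j$ is an irreducible $G_j^\circ$-representation: this is classical for the standard representation of $\SU(n_j)$ on $\C^{n_j}$ (where the $S^k$ are the basic polynomial representations), and equally so for $\Sp(m_j)$ on $\C^{2m_j}$, for which -- unlike exterior powers, which split off trace-like pieces via the symplectic form -- symmetric powers of the standard representation remain irreducible.

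The final step is to convert this holonomy irreducibility into stability of $\cG$. Since $c_1(\cF_j)=0$, one has $c_1(\cG)=0$ and $\mu_{f^*\alpha}(\cG)=0$. The metric induced by $f^*\omega$ on $\cG|_{Y_{\rm reg}}$ is Hermite-Einstein; if $\cH\subset \cG$ is a saturated destabilizing subsheaf, then a Griffiths-type pointwise inequality combined with the Hermite-Einstein condition and integration over $Y$ would force $\cH|_{Y_{\rm reg}}$ to be a parallel subsheaf. By parallel transport it would then correspond to a $\Holo_y(Y,f^*\omega)$-invariant subspace of $S^kW_j$, a fortiori $G_j^\circ$-invariant; by the irreducibility established above this subspace must be $\{0\}$ or all of $S^kW_j$, so $\cH$ is trivial or equal to $\cG$, contradicting destabilization.

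The hard part will be making the Griffiths / Hermite-Einstein argument rigorous on the singular space $Y$: the metric $f^*\omega$ is only known to be smooth on $Y_{\rm reg}$ and its behavior near $Y_{\rm sing}$ is poorly controlled, so Stokes' formula cannot be applied directly. This is precisely the obstacle overcome in the proof of Theorem~\ref{thm poly} (cf.~\cite{CGGN}), by working on a resolution of $Y$ with smooth approximate Kähler-Einstein metrics and passing to the limit with uniform estimates; the same scheme should carry over to $S^{[k]}\cF_j$ because all the positivity properties used in the approximation are preserved under taking $S^{[k]}$ of a reflexive parallel subsheaf.
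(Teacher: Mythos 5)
Your proposal is correct and follows essentially the same route as the paper: fix a quasi-étale cover, observe that the restricted holonomy of $f^{[*]}\cF_j|_{Y_{\rm reg}}$ is unchanged ($\SU(n_j)$ or $\Sp(n_j/2)$), apply the singular Hermite-Einstein machinery underlying Theorem~\ref{thm poly} to $\cG = f^{[*]}S^{[k]}\cF_j$ to turn any destabilizing subsheaf into a parallel one, and contradict the irreducibility of $S^k\C^{n_j}$ as a representation of $\SU(n_j)$ or $\Sp(n_j/2)$. The only cosmetic difference is that the paper argues by contradiction and invokes "the proof of Theorem~\ref{thm poly} applied to $\cG$ rather than $T_Y$" in one line, whereas you spell out the Griffiths-inequality/second-fundamental-form mechanism and the parallel-transport dictionary in more detail; the mathematical content is the same.
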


\begin{proof}
Argue by contradiction. Then, there exists $j\in J, k\in \N$ and a quasi-étale cover $f:Y\to X$ such that $\cG:=f^{[*]}\Sk\cF_j$ is destabilized. Let us first set up some further notation. We pick $y\in Y_{\rm reg}$, set $r:=\mathrm{rk}(\cF_j)$, $V:=(f^*\cF_j)_y$, set $G=\Hol_y(f^{[*]}\cF_j|_{Y_{\rm reg}}, f^*\omega)$ which acts on $V$. Recall that we have $G^\circ\simeq\SU(r)$ or $G^\circ \simeq\Sp(\frac r2)$ since restricted holonomy does not change under quasi-étale cover as we have explained already. 

The proof of Theorem~\ref{thm poly} applied to $\cG$ rather than $T_Y$ shows that one can split $\cG=\cG_1\oplus \cG_2$ into orthogonal, parallel subsheaves with respect to $f^*\omega$. In particular, the $G$-representation $S^kV$ admits an invariant subspace and is not irreducible. The same holds for $G^\circ$ as well, which contradicts the irreducibility of the induced representation $S^k\mathbb C^r$ of the groups $\SU(r)$ or $\Sp(\frac r2)$ if $r$ is even. 
\end{proof}

 To go further, one needs to determine the whole holonomy group $G$. More precisely, we would like to show that $G/G^\circ$ is finite. We will split the problem into the flat and the non-flat part. 

\subsection{The flat factor}

The key result here is that the presence of the flat factor $\cF_0$ in the decomposition \eqref{pre dec} is exclusively due to the existence of $1$-forms in some quasi-étale cover. In other words, and given Proposition~\ref{torus cover}, this means that there exist a complex torus $A$ and a splitting $Y\simeq A\times Z$ of a quasi-étale cover $f:Y\to X$ such that $f^{[*]}\cF_0= \mathrm{pr}_A^*T_A$ as subsheaves of $T_Y$. Let us first give some historical context to the above result. \\

This result was first proved by Druel when $X$ is projective as a consequence of the difficult integrability result \cite[Theorem~1.4]{Dru16}, relying among other things on an arithmetic algebraicity criterion due to Bost \cite{Bost01} for leaves of algebraic foliations defined over a number field. This result was later used by \cite{GGK} in order to show that $G/G^\circ$ is finite. Shortly after that, \cite{CGGN} observed that the result can actually be derived from the structure of the Shafarevich variety for linear Kähler groups \cite{CCE15} provided $X$ admits a maximally étale cover (which, in turn, was established a bit later as a consequence of \cite{Fujino22}). Finally, we would like to mention that Campana \cite{CampBB} observed that the results of \cite{HP} (combined with \cite{Dru16, GGK}, cf Corollary~\ref{cor alg int} below) enable one to leave the flat factor $\cF_0$ alone and integrate the non-flat factors first so that one is left with a variety where the whole tangent sheaf is flat, and one can simply appeal to \cite{GKP16}, cf Corollary~\ref{torus quotient}.\\

This section is devoted to proving the result below following the strategy of \cite[Proposition~6.9]{CGGN}.

\begin{thm}
\label{thm flat}
Let $X$ as in Setup~\ref{setup} and let $\cF_0$ be the flat factor in the decomposition \eqref{pre dec}. We have
\[\mathrm{rk}(\cF_0)=\widetilde q(X).\]
\end{thm}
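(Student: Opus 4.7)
\emph{Easy direction.} To prove $\widetilde{q}(X)\le r:=\mathrm{rk}(\cF_0)$, let $f:Y\to X$ be any quasi-\'etale cover and fix $y\in Y_{\rm reg}$. Since klt singularities are rational, Theorem~\ref{thm extension forms} identifies $H^0(Y,\Omega_Y^{[1]})$ with $H^0(Y_{\rm reg},\Omega^1_{Y_{\rm reg}})$. By the Bochner principle (Theorem~\ref{bochner}), every element of the latter is parallel for $f^*\omega$, so parallel transport at $y$ identifies it with $(V_Y^*)^{G_Y}$, where $V_Y=T_{Y,y}$ and $G_Y=\Hol_y(Y_{\rm reg},f^*\omega)$. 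Since restricted holonomy is preserved under quasi-\'etale covers (the canonical isomorphism $\Holo_y(Y_{\rm reg},f^*\omega)\simeq\Holo_x(\Xr,\omega)=G^\circ$), one has $(V_Y^*)^{G_Y}\subset (V_Y^*)^{G_Y^\circ}=W_0^*$, which has dimension $r$. Passing to the supremum yields the upper bound.

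\emph{Hard direction — reductions.} For $\widetilde{q}(X)\ge r$, the idea is to build a quasi-\'etale cover $Y\to X$ on which the unitary monodromy
\[\rho:\pi_1(Y_{\rm reg})\longrightarrow \U(W_0)\]
of the hermitian flat subbundle $\cF_0^Y|_{Y_{\rm reg}}$ becomes trivial. Granted such $Y$, the $r$ linearly independent parallel sections of $(\cF_0^Y)^*|_{Y_{\rm reg}}\subset\Omega^1_{Y_{\rm reg}}$ are automatically holomorphic and extend via Theorem~\ref{thm extension forms} to $H^0(Y,\Omega_Y^{[1]})$, giving the desired inequality. To set the stage I would first apply Theorem~\ref{abondance} to replace $X$ by a quasi-\'etale cover with canonical singularities and $K_X\sim\cO_X$, and then apply Corollary~\ref{mqe} to further assume $X$ is maximally quasi-\'etale. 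By Mal\v{c}ev's theorem, $\rho$ then factors through the topological fundamental group $\pi_1(X)$, which, via a log resolution and the rationality of klt singularities, is a K\"ahler group.

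\emph{Hard direction — heart and main obstacle.} The remaining step, and the only truly non-formal one, is to trivialize $\rho:\pi_1(X)\to\U(W_0)$ by a finite \'etale cover. \emph{A priori} the image $\rho(\pi_1(X))$ may be an infinite and non-discrete subgroup of $\U(W_0)$: triviality of the restricted holonomy on $W_0$ forces the image to be $0$-dimensional as a Lie subgroup, but does not by itself make it discrete in the subspace topology of the compact group $\U(W_0)$. Following the strategy of \cite{CGGN}, I would invoke the structure theorem of Campana-Claudon-Eyssidieux \cite{CCE15} on Shafarevich morphisms for linear representations of K\"ahler groups: applied to $\rho$, it produces — after a finite \'etale base change — a factorization of the Shafarevich morphism through the Albanese morphism $\alpha:X\to\mathrm{Alb}(X)$. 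Combined with the Albanese splitting theorem (Theorem~\ref{alb}), this yields a finite \'etale cover $Y\to X$ on which $\rho$ is trivialized, concluding the proof. Every other ingredient in the argument — Bochner, reflexive extension, maximally quasi-\'etale covers, Albanese splitting — is already available from the preceding sections; the CCE15 input is the essential piece.
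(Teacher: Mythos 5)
The easy direction and the identification of the key literature input (the Shafarevich‐morphism structure theorem of \cite{CCE15}) are both correct, and the overall skeleton of your hard direction follows the paper's strategy. However, the ``heart'' paragraph compresses the argument to the point of leaving a genuine gap.

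The paper first invokes Proposition~\ref{torus cover} to reduce to the case $\widetilde q(X)=0$, so that after also passing to a maximally quasi-\'etale cover and a resolution $\pi:\wX\to X$, every \'etale cover $\wX'\to\wX$ satisfies $q(\wX')=0$. This vanishing is what ultimately forces the Shafarevich variety of $\rho$ to be a point. You skip this reduction: after \ref{abondance} and \ref{mqe} you work with $X$ that may still satisfy $q(X)>0$, and then the image of $\rho$ need not be finite at all --- the torus directions in $\cF_0$ do contribute nontrivial holomorphic $1$-forms to covers and the corresponding monodromy cannot be killed by a finite cover. Without the a priori normalization $\widetilde q(X)=0$, the inequality $\widetilde q(X)\ge r$ does not follow from finiteness of the monodromy on $W_0$, because that monodromy is \emph{not} finite.

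Secondly, what \cite{CCE15} produces is not a factorization of the Shafarevich map through the Albanese; it produces (after an \'etale base change and a choice of resolution) a holomorphic Shafarevich map $\wX'\to\mathrm{Sh}(X)$ with $\mathrm{Sh}(X)$ bimeromorphic to a smooth torus fibration over a variety $Z$ \emph{of general type}. To kill $Z$ one must use that no \'etale cover $\wX'$ can fibre over a positive-dimensional base of general type; this in turn relies on $\kappa(\wX')=0$ together with Viehweg's theorem \cite{Vieh83} in the projective case and Campana's generalization \cite[Theorem~5.1]{Campana04} in the K\"ahler case. This nontrivial ingredient is absent from your sketch. Only after $Z$ is reduced to a point and the residual torus $\mathrm{Sh}(X)$ is seen to be a point (using $q(\wX')=0$, again from the omitted reduction) does one conclude that $\mathrm{im}(\rho)$ is finite. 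Finally, the step ``combined with Theorem~\ref{alb}, $\rho$ is trivialized'' does not follow as stated: the Albanese splitting $X'\simeq F\times B$ kills the monodromy coming from the torus factor $B$, but says nothing about the (a priori infinite) monodromy on the flat factor of $T_F$. That is exactly what the reduction to $\widetilde q(X)=0$ plus the Shafarevich/Iitaka argument is for.
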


\begin{proof}
Thanks to Proposition~\ref{torus cover}, one can assume that $X$ has canonical singularities, trivial canonical bundle and it is enough to show that $\widetilde q(X)=0$ implies $r:=\mathrm{rk}(\cF_0)=0$. By Corollary~\ref{mqe}, one can assume that $X$ is maximally quasi-étale. In particular, the representation $\rho:\pi_1(\Xr)\to \U(\C^n)$ associated to $\cF_0$ factors through $\pi_1(X)$. 

Let $\pi:\wX\to X$ be a resolution of singularities of $X$. It satisfies two important properties. 

\begin{enumerate}[label=$(\roman*)$]
\item The natural map $\pi_*: \pi_1(\wX)\to \pi_1(X)$ is an isomorphism \cite{Takayama2003}, and $\rho$ factors through $\pi_1(\wX)$.
\item No étale cover $\wX'\to\wX$ is birational to an algebraic fiber space whose base is of general type of positive dimension. Indeed, first observe that $q(\wX')=0$ (by Theorem~\ref{thm extension forms}) and $\kappa(\wX')=0$ (because the finite part $X'\to X$ of the Stein factorization of $\wX'\to X$ is automatically quasi-étale hence $X'$ has canonical singularities and trivial canonical bundle). Now, the main claim follows from Viehweg's partial solution of the Iitaka conjecture \cite{Vieh83} when $X$ is projective, and its upgrade \cite[Theorem~5.1]{Campana04} when $X$ is merely Kähler. 
\end{enumerate}

Given the linear representation $\rho$ (not subject to any further assumption), \cite[Th\'eor\`eme~6.5]{CCE15} shows that up to choosing another resolution $\wX$, there exists an étale cover $\wX'\to \wX$ such that there exists a holomorphic Shafarevich map $\wX'\to \mathrm{Sh}(X)$ associated to $\rho$ (in the sense of \cite[Définition~2.13]{CCE15}) and, moreover, $\mathrm{Sh}(X)$  is bimeromorphic to a smooth fibration in complex tori over a variety $Z$ of general type. Thanks to $(ii)$ above, $Z$ has to be a point and $\mathrm{Sh}(X)$  is bimeromorphic to complex torus. Since $q(\wX')=0$, this complex torus is itself a point.  This implies that the composition $\pi_1(\wX')\to \pi_1(\wX)\to \U(\C^n)$ is trivial, hence $\mathrm{im}(\rho)$ is finite. This means that there exists a quasi-étale cover $f:Y\to X$ such that $f^{[*]}\cF_0\simeq \cO_Y^{\oplus r}$. Since $q(Y)=0$, we have $r=0$, hence the theorem.  
\end{proof}

\begin{rem}
\label{rem flat}
The proof of the theorem shows more generally that if $X$ is as in Setup~\ref{setup} and satisfies $\widetilde q(X)=0$, then any  representation $\pi_1(\Xr)\to \GL(r,\C)$ has finite image. Equivalently, any flat vector bundle $\cF$ on $\Xr$ becomes trivial after a finite étale cover. 
\end{rem}

\subsection{The non-flat factors}
We can now separately analyze the holonomy of the non-flat factors appearing in \eqref{pre dec}.
Let us first state the following intermediate result, which is extracted from \cite[Lemma~6.1]{CGGN}.

\begin{lem}
\label{deligne}
Let $X$ as in Setup~\ref{setup} such that $q(X)=0$. Then 
\[H^1(\Xr, \C)=0.\]
\end{lem}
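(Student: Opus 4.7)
The plan is to split the proof into two stages: first, deduce $H^1(X,\C)=0$ from the hypothesis $q(X)=0$ by Hodge theory on a resolution; second, use Theorem~\ref{thm braun} to identify $H^1(\Xr,\C)$ with $H^1(X,\C)$ via a sheaf-theoretic argument.

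For the first stage, I would fix a resolution $\pi \colon \wX \to X$. Since klt singularities are rational, $h^1(\wX,\cO_{\wX})=h^1(X,\cO_X)=q(X)=0$. Corollary~\ref{hodge duality} gives $h^0(X,\Omega_X^{[1]})=0$, which Theorem~\ref{thm extension forms} upgrades to $H^0(\wX,\Omega_{\wX}^1)=0$. The Hodge decomposition on the compact Kähler manifold $\wX$ then yields $H^1(\wX,\C)=0$. Because $\pi$ is proper with connected fibers (Zariski's main theorem, using normality of $X$), one has $\pi_*\C_{\wX}=\C_X$, and the five-term exact sequence of the Leray spectral sequence for $\pi$ produces an injection $H^1(X,\C)\hookrightarrow H^1(\wX,\C)=0$; hence $H^1(X,\C)=0$.

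For the second stage, let $i \colon \Xr\hookrightarrow X$ be the inclusion. I claim $R^1 i_*\C_{\Xr}=0$. The stalks over $\Xr$ are trivially zero, and at any $x\in \Xs$, Theorem~\ref{thm braun} supplies a small euclidean neighborhood $U$ with $\pi_1(U_{\rm reg})$ finite. Since $U_{\rm reg}=U\cap \Xr$, Hurewicz and universal coefficients yield
\[
H^1(U\cap \Xr,\C)=\mathrm{Hom}(\pi_1(U_{\rm reg})^{\mathrm{ab}},\C)=0,
\]
the last equality because a finite group admits only the trivial homomorphism to the torsion-free divisible group $(\C,+)$. Taking the colimit over shrinking $U$ shows that the stalk of $R^1 i_*\C_{\Xr}$ at $x$ vanishes. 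Since $X$ is a normal variety, $i_*\C_{\Xr}=\C_X$ (any connected open $V\subset X$ has $V\cap \Xr$ connected, as $V_{\rm sing}$ has complex codimension at least two in $V$). The five-term exact sequence for $i$ then reduces to $H^1(\Xr,\C)\simeq H^1(X,\C)=0$.

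The only genuine obstacle here is the sheaf-theoretic vanishing $R^1 i_*\C=0$, whose whole content is really the finiteness of local fundamental groups at klt singularities, i.e.\ Theorem~\ref{thm braun}. Once that deep input is granted, the remaining steps are formal manipulations with Leray spectral sequences and the standard Hodge-theoretic identities on $\wX$.
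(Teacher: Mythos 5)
Your proof is correct, but it takes a genuinely different route from the paper's. The paper's argument is purely Hodge-theoretic and avoids any input about fundamental groups: it passes to a log resolution $\pi:\wX\to X$ with exceptional SNC divisor $E$ and invokes Deligne's exact sequence
\[0\to H^0(\wX, \Omega_{\wX}^1(\log E)) \to H^1(\wX\setminus E, \C)\to H^1(\wX, \cO_{\wX})\to 0,\]
from the mixed Hodge structure on $H^1$ of the smooth open variety $\wX\setminus E = \Xr$. Both end terms vanish under $q(X)=0$ (the left via Theorem~\ref{thm extension forms} and Corollary~\ref{hodge duality}, the right by rationality of klt singularities), so the middle vanishes. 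Your argument instead uses Braun's Theorem~\ref{thm braun} to establish the purely topological fact $R^1 i_*\C_{\Xr}=0$ (where $i:\Xr\hookrightarrow X$), hence $H^1(\Xr,\C)\cong H^1(X,\C)$, and then kills $H^1(X,\C)$ by standard Hodge theory on a resolution. What this buys is a stronger and cleaner intermediate statement: the isomorphism $H^1(\Xr,\C)\cong H^1(X,\C)$ holds for \emph{any} compact klt variety, with no hypothesis on $q$ or $c_1$, and it isolates exactly where the klt condition enters (via finiteness of local fundamental groups). The cost is that Braun's theorem is a much deeper and more recent input than Deligne's exact sequence, which is elementary by comparison; the paper deliberately keeps this lemma light on the topological side and reserves Braun's theorem for where it is truly unavoidable (Corollaries~\ref{mqe} and~\ref{torus quotient}). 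Both proofs are valid; yours trades a heavier external input for a more transparent structural statement.
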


\begin{proof}
We appeal to a theorem of Deligne \cite[Theorem~8.35 b]{VoisinI}. More precisely, if $\wX\to X$ is a log resolution with exceptional divisor $E$ which is an isomorphism over $X_{\rm reg}$, then we have the exact sequence
 \[0\to H^0(\wX, \Omega_{\wX}^1(\log E)) \to H^1(\wX\setminus E, \C)\to H^1(\wX, \cO_{\wX})\to 0.\]
 Since $H^0(\Xr, \Omega_{\Xr}^1)=q(X)=0$, the first term is zero. Similarly, the last term vanishes by Corollary~\ref{hodge duality}. Therefore the middle term $H^1(\wX\setminus E, \C)=H^1(\Xr,\C)=0$, and the result follows easily.
\end{proof}

 We can now prove the result below, which is due to \cite{GGK, CGGN} in the projective and Kähler setting, respectively.

\begin{thm}
\label{thm holonomy}
Let $X$ as in Setup~\ref{setup}, and assume that $\widetilde q(X)=0$. There exists a quasi-étale cover $Y\to X$ and a decomposition
\[T_Y= \bigoplus_{j\in J} \cF_j\]
into orthogonal, parallel subsheaves such that for any $j\in J$, $\cF_j|_{\Xr}$ has holonomy either $\SU(n_j)$ or $\Sp(\frac {n_j} 2)$ where $n_j=\mathrm{rk}(\cF_j)\ge 2$. 
\end{thm}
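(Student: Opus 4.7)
The plan is to first pass to the weak holonomy cover $X^{\rm hol}\to X$ constructed in Section~\ref{sec rest hol}, so as to inherit the canonical decomposition \eqref{pre dec}. Since the augmented irregularity is non-increasing under quasi-étale covers, $\widetilde q(X^{\rm hol})=0$, and Theorem~\ref{thm flat} then forces the flat summand $\cF_0$ to have rank zero; hence $T_{X^{\rm hol}}=\bigoplus_{j\in J}\cF_j$ with $G_j^\circ\in\{\SU(n_j),\Sp(n_j/2)\}$ and $n_j=\mathrm{rk}(\cF_j)\geq 2$ by the classification \eqref{rest hol}. What remains is to produce a quasi-étale cover $Y\to X$ on which the full holonomy $G_j$ of every factor coincides with $G_j^\circ$.

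The idea is to detect the obstruction $G_j/G_j^\circ$ through a well-chosen flat line bundle $M_j$ on $X_{\rm reg}^{\rm hol}$. When $G_j^\circ=\SU(n_j)$, I would take $M_j:=\det\cF_j|_{X_{\rm reg}^{\rm hol}}$: the Chern curvature of $\cF_j$ takes values in the Lie algebra of $G_j$, which coincides with $\mathfrak{su}(V_j)$ and is therefore trace-free, so the induced Chern connection on $M_j$ is flat. When $G_j^\circ=\Sp(n_j/2)$, the line $\C\sigma_j\subset\Lambda^2 V_j^*$ spanned by the $G_j^\circ$-invariant symplectic form is stabilized (as a line) by $G_j$ because $G_j$ normalizes $G_j^\circ$; by parallel transport it descends to a parallel holomorphic line subbundle $M_j\subset\Lambda^2\cF_j^*|_{X_{\rm reg}^{\rm hol}}$ whose restricted holonomy is trivial by the very definition of $\sigma_j$, so $M_j$ is flat as well.

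Finally, since $|J|$ is finite, $\bigoplus_{j\in J}M_j$ is a flat vector bundle of finite rank on $X_{\rm reg}^{\rm hol}$, and Remark~\ref{rem flat} (applicable since $\widetilde q(X^{\rm hol})=0$) produces a finite étale cover of $X_{\rm reg}^{\rm hol}$ on which every $M_j$ becomes trivial; Nagata's purity of the branch locus then promotes it to a quasi-étale cover $Y\to X^{\rm hol}$, and composing with $X^{\rm hol}\to X$ yields the desired cover $f:Y\to X$. On $Y$, triviality of $f^{[*]}M_j$ forces the holonomy $G_j'$ of $f^{[*]}\cF_j$ to sit inside $\SU(n_j)$ in the first case and inside $\Sp(n_j/2)$ in the second, while the inclusion $G_j^\circ\subset G_j'$ persists because restricted holonomy is preserved under étale covers as recalled at the end of Section~2.3; hence $G_j'=G_j^\circ$ for every $j\in J$. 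The most delicate point, in my view, is to carefully justify, in the $\Sp$ case, that $\C\sigma_j$ descends to a globally defined holomorphic line subbundle of $\Lambda^2\cF_j^*$ on $X_{\rm reg}^{\rm hol}$ (rather than merely on its universal cover), which crucially relies on the fact that $G_j$ normalizes $G_j^\circ$.
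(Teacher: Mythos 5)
Your proof is correct, and the overall architecture matches the paper's: pass to the weak holonomy cover, invoke Theorem~\ref{thm flat} to kill $\cF_0$, then show the discrete obstruction $G_j/G_j^\circ$ is killed by a further quasi-étale cover. The difference lies in how this last step is handled. You construct, explicitly, a flat holomorphic line bundle $M_j$ (the determinant in the $\SU$ case, the parallel line of the symplectic form in the $\Sp$ case) whose monodromy representation is precisely $G_j/G_j^\circ\hookrightarrow\U(1)$, and you then invoke Remark~\ref{rem flat} --- i.e.\ the full strength of Theorem~\ref{thm flat}, resting on \cite{CCE15} and Shafarevich-map machinery --- to trivialize $\bigoplus_j M_j$ on a finite cover. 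The paper instead makes the abelianity of $G_j/G_j^\circ$ abstract (as a subgroup of $N_j/G_j^\circ\simeq\U(1)$, where $N_j$ is the normalizer of $G_j^\circ$), so that $\phi_j\colon\pi_1(\Xr)\to G_j/G_j^\circ$ factors through $H_1(\Xr,\Z)$, and then appeals only to Lemma~\ref{deligne}, a soft Hodge-theoretic computation showing $H^1(\Xr,\C)=0$ and hence that $H_1(\Xr,\Z)$ is finite. So the paper's finiteness step is lighter: it does not re-invoke the nonlinear Shafarevich input behind Remark~\ref{rem flat}, whereas yours does. Both are valid --- and indeed the flat line bundle you write down is exactly the geometric incarnation of the character $N_j/G_j^\circ\to\U(1)$ the paper has in mind --- but if one wanted to state the theorem for some class of varieties where Remark~\ref{rem flat} is not available while Lemma~\ref{deligne} still is, the paper's route would be preferable. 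Your care about descent of $\C\sigma_j$ to a genuine line subbundle of $\Lambda^{2}\cF_j^*$ on $X^{\rm hol}_{\rm reg}$ is well placed and your justification (parallel transport is well defined because $G_j$ stabilizes the line) is correct.
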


\begin{proof}
Up to passing to the weak holonomy cover, one can assume that the decomposition \eqref{pre dec} holds. 
We have the following properties.
\begin{enumerate}
\item The flat factor $\cF_0$ does not appear, cf Theorem~\ref{thm flat}.
\item The group $H_1(\Xr,\Z)$ is finite, cf Lemma~\ref{deligne}.
\end{enumerate}
 We claim that $G_j/G_j^\circ$ is abelian. Indeed, one has $G_j\subset N_j$ where $N_j$ is the normalizer of $G_j^\circ$ in $\U(n_j)$. Now, an elementary computation shows that in both cases $G_j^\circ=\SU(n_j)$ or $G_j^\circ=\Sp(n_j/2)$, one has $N_j/ G_j^\circ \simeq \U(1)$, hence the claim. 
 
 \noindent
The surjection 
\[\phi_j:\pi_1(\Xr)\to G_j/G_j^\circ\]
 factors through the finite group $H_1(\Xr,\Z)$, hence $G_j/G_j^\circ$ is finite by (2). The kernel of $\phi_j$ induces a quasi-étale cover $f_j:Y\to X$ where the holonomy of $f_j^{[*]}\cF_j$ is connected, hence the result. 
\end{proof}

\section{Irreducible pieces of the decomposition}
\label{sec irr}

Before we introduce the notion of irreducible Calabi--Yau and holomorphic symplectic varieties, we would like to illustrate with a very simple example that in the singular case deciding what should be called "irreducible" may not be completely obvious.

\begin{exa}
\label{sing kummer}
Let $X:=A/\{\pm 1\}$ be a singular Kummer surface, where $A$ is an abelian surface. In particular, $X$ is a projective surface with canonical singularities and trivial canonical bundle. One the one hand, one has
\begin{enumerate}[label=\arabic*.]
\item $\pi_1(X)=\{1\}$.
\item $q(X)=0$. 
\item The minimal resolution of $X$ is a K3 surface. 
\item $X$ can be smoothed out by K3 surfaces (at least when $A$ is principally polarized). 
\end{enumerate}
On the other hand, one has 
\begin{enumerate}[label=\arabic*.']
\item $\pi_1(\Xr)$ is an extension of $\Z/2\Z$ by $\Z^4$. 
\item $\widetilde q(X)=2$. 
\item A quasi-étale cover of $X$ is a torus. 
\item $T_{\Xr}$ is flat and the holonomy of any singular Ricci flat metric is $\Z/2\Z$. 
\end{enumerate}
\end{exa}

This example tells us that the geometry (or, roughly speaking the type) of a given variety can drastically change when one passes to a quasi-étale, {\it non-étale} cover. This suggests that whether one looks at $\pi_1(X)$ or $\pi_1(\Xr)$ will provide vastly distinct geometric insight into $X$. Many more relevant examples of singular varieties with trivial canonical bundle have been considered in the past; a thorough account was provided in \cite[Section~14]{GGK} with a view towards classification. 

It comes out of that discussion that it may be reasonable to allow quasi-étale covers and consider $\pi_1(\Xr)$. A possible way to extend the definition of ICY (resp. IHS) manifold to the singular setting would be to require $\Xr$ to be simply connected and have $\bigoplus_{p=0}^n H^0(\Xr, \Omega_X^p)$ be isomorphic to the algebra of an ICY (resp. IHS) manifold. Unfortunately and despite major progress like Theorem~\ref{thm braun}, it is still out of reach to show that the fundamental group of $\Xr$ (for $X$ as in Setup~\ref{setup}) is virtually abelian, and finite when $\widetilde q(X)=0$. 

For that reason, it was suggested by \cite{GKP} that in order to define the singular analogue of ICY and IHS varieties, one could replace the topological assumption on $\pi_1(\Xr)$ by a geometric property to be checked on any quasi-étale cover of $X$. More precisely, the following definitions were proposed by \cite{GKP}. 
 
 \begin{defi}[ICY and IHS varieties]
 \label{defi icy ihs}
 Let $X$ be a compact normal Kähler variety of dimension $n\ge 2$ with canonical singularities such that $K_X\sim \cO_X$. We say that $X$ is 
 \begin{enumerate}[label=$\bullet$]
\item \emph{irreducible Calabi--Yau} (ICY for short) if $n\ge 3$ and for any quasi-étale cover $Y\to X$, one has 
\[\forall p=1, \ldots, n-1, \quad H^0(Y,\Omega_Y^{[p]})=\{0\}.\] 
\item \emph{irreducible holomorphic symplectic} (IHS for short) if there exists a holomorphic symplectic form $\sigma \in H^0(X,\Omega_X^{[2]})$ such that any quasi-étale cover $f:Y\to X$, we have an isomorphism of algebras 
\[\bigoplus_{p=0}^n H^0(Y,\Omega_Y^{[p]})=\C[f^{[*]}\sigma].\] 
\end{enumerate}
\end{defi}
By definition, $\sigma$ is symplectic means that $\sigma|_{\Xr}$ is symplectic in the usual sense. Also, the notation $f^{[*]}\sigma$ refers to the reflexive pull-back; indeed, $f^*\sigma|_{\Xr}$ is a holomorphic $2$-form on a Zariski open subset of $Y$ whose complement has codimension at least two, hence it uniquely extends to a global section of $\Omega_Y^{[2]}$ by definition of the latter sheaf. 

When $X$ is smooth, it follows from the decomposition theorem that an ICY manifold in the sense of Definition~\ref{defi icy ihs} has finite fundamental group (hence has an étale cover which is ICY in the "usual" sense) while an IHS manifold in the sense of Definition~\ref{defi icy ihs} is automatically simply connected, hence is IHS in the "usual" sense too. \\

In the past, many different definitions of what a singular holomorphic symplectic variety could be were proposed, cf \cite[Section~14]{GGK} for a discussion on that topic, or Section~\ref{PSV} below for the connection with the notion of primitive symplectic variety. More recently, \cite{GPP24} discussed the surface case at length while \cite{BGMM25} provided a whole host of old and new examples coming out of performing terminalization of quotients of IHS manifolds. Finally, let us point out to \cite{PR18} and \cite{Sacca23} who give interesting examples of IHS varieties not admitting a symplectic resolution; these example arise from the study of holomorphic moduli spaces of sheaves and Bridgeland-stable objects.\\

The following proposition shows that ICY and IHS varieties can alternatively be defined solely in terms of their holonomy with respect to a singular Kähler Ricci flat metric. 

\begin{prop} 
\label{irreducible}
Let $X,\omega$ be as in Setup~\ref{setup}. We have
\begin{enumerate}[label=$(\roman*)$]
\item $X$ is an ICY variety if, and only if $\Hol_x(\Xr, \omega)=\SU(n)$. 
\item $X$ is an IHS variety if, and only if $\Hol_x(\Xr, \omega)=\Sp(\frac n2)$.
\end{enumerate}
Moreover, in either case $T_X$ is strongly stable with respect to any Kähler class. 
\end{prop}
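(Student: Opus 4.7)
The backbone of the argument is the Bochner principle (Theorem~\ref{bochner}) together with the extension theorem for reflexive differentials (Theorem~\ref{thm extension forms}). Jointly, for any quasi-étale cover $f:Y\to X$ and any smooth lift $y$ of $x$, they identify $H^0(Y,\Omega_Y^{[p]})$ with the space of $G_Y$-invariants in $\Lambda^p V^*$, where $G_Y=\Hol_y(Y_{\rm reg}, f^*\omega)$. Crucially, $G_Y^\circ=G^\circ$ since restricted holonomy is preserved by quasi-étale covers. The plan is to reduce both~(i) and~(ii) to this representation-theoretic dictionary, and then to deduce strong stability from Lemma~\ref{sym stable}.

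For the forward implications, I would assume $G=\SU(n)$ (resp.~$\Sp(n/2)$). Connectedness yields $G=G^\circ$, so $G_Y=G$ for every quasi-étale cover $Y\to X$. The classical computation of invariants of $\SU(n)$ on $\Lambda^\bullet \C^n$, generated by the volume form, resp.~of $\Sp(m)$ on $\Lambda^\bullet \C^{2m}$, generated by the standard symplectic form, then delivers the required vanishings in the ICY case and the algebra isomorphism $\C[\sigma]$ in the IHS case, while also producing a trivialization of $K_Y$. Since $K_Y$ is then Cartier and $X$ is klt, the discrepancies are non-negative integers, so $Y$ has canonical singularities.

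For the reverse implications, I first observe that $\widetilde q(X)=0$ in both the ICY and IHS cases (only even-degree invariants of $\sigma$ survive in the IHS case), and that the volume form (resp.~the symplectic form) on $X$ forces $G\subset\SU(n)$ (resp.~$G\subset\Sp(V,\sigma_x)\cap \U(V)=\Sp(n/2)$). I then apply Theorem~\ref{thm holonomy} to obtain a quasi-étale cover $Y\to X$ with $T_Y=\bigoplus_{j\in J}\cF_j$, each factor of $\SU(n_j)$- or $\Sp(n_j/2)$-type, $n_j\ge 2$. In the ICY case, any $\SU(n_j)$-factor contributes its volume form to $H^0(Y,\Omega_Y^{[n_j]})$ and any $\Sp$-factor contributes a $2$-form; the ICY vanishings thus force $|J|=1$, $\SU$-type and $n_1=n$. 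In the IHS case, decomposing the parallel $2$-form $\sigma$ along the orthogonal factors $\cF_j$, the off-diagonal blocks vanish by a Schur-type argument applied to the pairwise non-isomorphic irreducible $G_Y$-modules $V_j$, so $\sigma$ is a sum of canonical symplectic forms on the $\Sp$-type factors. Non-degeneracy then rules out any $\SU(n_j)$-factor with $n_j\ge 3$, and the constraint $\dim H^0(Y,\Omega_Y^{[2]})=1$ forces $|J|=1$. In either case $G^\circ=G_Y^\circ$ equals $\SU(n)$ or $\Sp(n/2)$, and the sandwich $G^\circ\subseteq G\subseteq \SU(n)$ (resp.~$\Sp(n/2)$) yields the desired equality.

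For strong stability, the equality $G=G^\circ$ makes the weak holonomy cover trivial, and Theorem~\ref{thm flat} combined with the above analysis collapses the decomposition~\eqref{pre dec} to the single non-flat factor $T_X=\cF_1$. Lemma~\ref{sym stable} with $k=1$ then yields strong stability with respect to the class $\alpha$ used to build $\omega$. Running the same argument with the singular Ricci flat metric in any other Kähler class $\alpha'$—its holonomy is again $\SU(n)$ or $\Sp(n/2)$ because the ICY/IHS property of $X$ is intrinsic—upgrades this to strong stability with respect to every Kähler class. The main technical hurdle I anticipate is the IHS reverse direction: one must carefully unpack how a non-degenerate invariant $2$-form interacts with the parallel orthogonal decomposition, and exploit $\dim H^0(Y,\Omega_Y^{[2]})=1$ to forbid multiple $\Sp$-type factors—this is the place where a short Schur-lemma computation on the cross terms of $\sigma$ is unavoidable.
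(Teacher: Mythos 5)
Your proof is correct and follows essentially the same route as the paper: Bochner principle plus invariant theory of $\SU(n)$ and $\Sp(n/2)$ for the forward implications, the cover of Theorem~\ref{thm holonomy} together with a case analysis on the factors (decomposing the symplectic form along the orthogonal pieces in the IHS case) for the reverse implications. The only small variation is in the strong stability step, where you appeal to Lemma~\ref{sym stable} with $k=1$ and rerun the argument in each Kähler class, whereas the paper argues directly that a destabilizing splitting on a quasi-étale cover would force the holonomy into a proper product $\U(n_1)\times\U(n_2)$, contradicting what was just proved; both are correct and essentially the same idea.
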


\begin{proof}
Assume that $\Hol_x(\Xr, \omega)=\SU(n)$. Since $\SU(n)$ leaves a non-zero $(p,0)$ form invariant iff $p=0,n$, Bochner principle implies $K_X$ is trivial (hence $X$ has canonical singularities) and $h^0(X,\Omega_X^{[p]})=0$ for $0<p<n$. Moreover, $\Hol_x^\circ(\Xr,\omega)$ is invariant under quasi-étale cover. Since the latter is already connected, it follows that $X$ is an ICY variety. The same arguments shows that $X$ is an IHS variety provided that $\Hol_x(\Xr, \omega)=\Sp(\frac n2)$.

Next, assume that $X$ is an ICY variety. By Bochner principle, we have $\Hol_x(\Xr, \omega)\subset \SU(n)$ so that we are left to showing the reverse inclusion. It is enough to show that some quasi-étale cover has holonomy $\SU(n)$. Let $Y\to X$ be the cover from Theorem~\ref{thm holonomy}. Since each piece of the decomposition has trivial determinant, there can only be one such piece, otherwise $Y$ would carry non-zero holomorphic forms of degree less than $n$. We thus have to eliminate the possibilities that $T_Y$ has holonomy $\Sp(\frac n 2)$.  But this cannot happen since $Y$ carries no non-zero holomorphic $2$-form.

Finally, assume that $X$ is an IHS variety. By Bochner principle, we have $\Hol_x(\Xr, \omega)\subset \Sp(\frac n2)$ hence we have to show the reverse inclusion. It is enough to show that some quasi-étale cover has holonomy $\Sp(\frac n2)$. Let $Y\to X$ be as above and let $\sigma_Y$ be a non-zero $2$-form on $Y$.  By Bochner principle and since the holonomy group of $(Y,f^*\omega)$ is of the form $\prod_{j\in J}G_j$, $\sigma_Y$ can be written as 
\[\sigma_Y=\sum_{j\in J} \sigma_j\]
 where $\sigma_j\in H^0(Y, \Lambda^{[2]} \cF_j^*)$. Since $\sigma_Y$ is unique up to scaling, there is exactly one non-zero summand in the above decomposition. Since $\sigma_Y$ is symplectic, no summand can vanish. Therefore $|J|=1$ and the holonomy is the symplectic group. 
 
 Let us now prove the last statement. Let $X$ be an ICY or IHS variety. If $X$ is not strongly stable with respect to some Kähler class $\alpha$, then there exists a quasi-étale cover $f:Y\to X$ such that $T_Y= \cG_1\oplus \cG_2$ for some non-zero subsheaves $\cG_1, \cG_2$ with zero slope wrt $f^*\alpha$. By Bochner principle (or, more precisely, the proof of Theorem~\ref{thm poly}), the subbundles $\cG_i|_{Y_{\rm reg}}$ are parallel with respect to $f^*\omega$, hence the holonomy of that metric would be contained in $\U(n_1)\times \U(n_2)$ where $n_i:=\mathrm{rk}(\cG_i)$, contradicting the results we just proved.
\end{proof}

\begin{prop}
\label{prop strongly stable}
Let $X$ as in Setup~\ref{setup}. If $T_X$ is strongly stable with respect to a given Kähler class, then there exists a quasi-étale cover $Y\to X$ such that $X$ is either an ICY or an IHS variety. 
\end{prop}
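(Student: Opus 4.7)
The plan is to reduce to the setting of Theorem~\ref{thm holonomy} and then let the stability of the tangent sheaf collapse the holonomy decomposition to a single irreducible factor. First, I would pass to an index-one cover provided by Theorem~\ref{abondance} so that one may assume $X$ has canonical singularities and $K_X\sim \cO_X$; this step is harmless because strong stability propagates through quasi-étale covers. Indeed, composition of quasi-étale maps is quasi-étale and reflexive pullback commutes with composition, so for any $f:X_1\to X$ and $g:Z\to X_1$ quasi-étale one has $g^{[*]}T_{X_1}=(f\circ g)^{[*]}T_X$, which is stable by hypothesis.

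Next I would show that $\widetilde q(X)=0$. Otherwise, Proposition~\ref{torus cover} yields a quasi-étale cover $h:X'\to X$ with $X'\simeq T\times Z$ for some positive-dimensional complex torus $T$. Parallelizability of $T$ exhibits a trivial direct summand $\cO_{X'}^{\oplus \dim T}$ of $T_{X'}=h^{[*]}T_X$. Since $c_1(X')=0$ this summand has slope zero, and it is either a proper destabilizing subsheaf of $T_{X'}$ (when $\dim T<n$) or forces $T_{X'}$ itself to be trivial of rank $n\ge 2$, in either case contradicting strong stability. Once $\widetilde q(X)=0$, Theorem~\ref{thm holonomy} provides a further quasi-étale cover $k:Y\to X$ carrying an orthogonal parallel decomposition $T_Y=\bigoplus_{j\in J}\cF_j$ with $\mathrm{rk}(\cF_j)\ge 2$ and restricted holonomy $\SU(\mathrm{rk}\,\cF_j)$ or $\Sp(\mathrm{rk}\,\cF_j/2)$. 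Each $\cF_j$ has $c_1=0$ by Theorem~\ref{thm poly}(i), hence slope zero, so stability of $T_Y=k^{[*]}T_X$ forces $|J|=1$. The full holonomy of $(Y,k^*\omega)$ is therefore $\SU(n)$ or $\Sp(n/2)$, and Proposition~\ref{irreducible} identifies $Y$ as ICY or IHS.

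The main obstacle is the first half of the second paragraph: mere stability of $T_X$ would not detect flat directions that appear only on a quasi-étale cover, so one genuinely needs the hypothesis of \emph{strong} stability. The mechanism that makes the hypothesis effective is Proposition~\ref{torus cover}, which converts any non-vanishing augmented irregularity into a genuine trivial direct summand of the tangent sheaf on a cover, where it can be directly confronted with the slope inequality defining stability.
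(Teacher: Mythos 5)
Your proof is correct and follows essentially the same route as the paper: rule out $\widetilde q(X)\neq 0$ via Proposition~\ref{torus cover}, pass to the cover $Y$ of Theorem~\ref{thm holonomy}, let strong stability force $|J|=1$, and conclude with Proposition~\ref{irreducible}. The paper's proof is terser (it leaves the contradiction from Proposition~\ref{torus cover} implicit), and the preliminary pass to an index-one cover via Theorem~\ref{abondance} is harmless but not needed, since Proposition~\ref{torus cover} and Theorem~\ref{thm holonomy} already handle that reduction internally.
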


\begin{proof}
We fix the Kähler class $\alpha$ and work with the associated singular Kähler Ricci flat metric $\omega$. If $\widetilde q(X)\neq 0$, then we get a contradiction Proposition~\ref{torus cover}. Therefore, we can consider the cover $f:Y\to X$ from Theorem~\ref{thm holonomy}. Since $T_Y$ is stable with respect to $f^*\alpha$, we have $|J|=1$ and the statement follows from Proposition~\ref{irreducible}. 
\end{proof}

Let us finish this section by pointing out 

\section{Splitting off the factors in the tangent decomposition}

With Proposition~\ref{torus cover} and Theorem~\ref{thm holonomy} in hand, the remaining task to accomplish is to show if $X$ has vanishing augmented irregularity, then each factor in the decomposition $T_Y= \bigoplus_{j\in J} \cF_j$ of the quasi-étale cover $Y\to X$ corresponds to direct factor of the variety $Y$ itself, maybe up to a further quasi-étale cover. That is, we need to show that there exist a quasi-étale cover $f:Z\to Y$ and a decomposition $Z\simeq   \prod_{j\in J} Z_j$ such that $\mathrm{pr}_{Z_j}^*T_{Z_j}=f^{[*]}\cF_j$ for any $j \in  J$. Indeed, if that were the case then Proposition~\ref{irreducible} would guarantee that $Z_j$ would be either ICY or IHS.

At this point, the road forks between the projective and the Kähler setting. So far all the results stated were valid in the Kähler case, but in order to go further, one needs at this precise point to work in the algebraic category. This difficult task to split $X$ into a product can be reached in the following two steps. 
\begin{enumerate}[label=(\Roman*)]
\item Show that the foliations $\cF_j$ are algebraically integrable. 
\item Show that a decomposition $T_X=\bigoplus \cF_j$ into algebraically integrable foliations induces a splitting of a quasi-étale cover into as many factors. 
\end{enumerate}

\subsection{Algebraic integrability of $\cF_j$.}
Let us start by recalling the notion of pseudoeffective vector bundle or sheaf on a projective variety. It will play a key role in all what follows. 

\begin{defi}
\label{def psef}
Let $X$ be a normal projective variety, let $H$ be an ample Cartier divisor and let $\cF$ be a coherent reflexive sheaf. One says that $\cF$ is pseudoeffective (psef for short) if
for any $c>0$, there are integers $i,j>0$ satisfying $i>cj$ and  $H^0(X, \SiF \otimes \cO_X(jH))\neq 0.$
\end{defi}

Let us first make a couple of remarks about this definition. 

$\bullet$ If $\cF$ is a line bundle, then the condition above expresses that $c_1(\cF)$ is a limit of effective classes which is the usual definition of pseudoeffectivity. 

$\bullet$ If $\cF$ is locally free, then $\cF$ is psef iff the tautological quotient line bundle $\cO_{\PP(\cF^*)}(1)$ is psef where $\pi:\PP(\cF^*)\to X$ is the projective bundles of lines in $\cF^*$, cf \cite[Lemma~2.7]{Dru16}. The main point is that $\pi_*\cO_{\PP(\cF^*)}(k)\simeq S^kF$ and $\cO_{\PP(\cF^*)}(1)$ is relatively ample. 

$\bullet$ In the general case, $\PP(\cF^*)$ is too singular but one may replace it with a suitable modification in order to get a similar statement as in the locallly free case, cf \cite[Lemma~2.3]{HP}.\\

We are now going to state a fundamental result asserting that a foliation $\cF$ is algebraically integrable as soon as $\cF^*$ is {\it not} psef. This result has long history and goes back to Bogomolov-McQuillan \cite{bogomolov_mcquillan01} and Bost \cite{Bost01} in an arithmetic context. Actually their result also provides a criterion for the closure of the leaves to be rationally connected, but we will leave out this part which is not relevant in our context.  The following statement is extracted from \cite[Proposition~8.4]{Dru16}. 

\begin{thm}[Criterion for algebraic integrability]
\label{BM}
Let $X$ be a normal projective variety and let $\cF\subset T_X$ be a foliation. If $\cF^*$ is not pseudoeffective, then $\cF$ has algebraic leaves.  
\end{thm}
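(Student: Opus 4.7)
I would argue by contraposition: assuming $\cF$ is not algebraically integrable, I aim to show that $\cF^*$ is pseudoeffective. This is the geometric counterpart of Bogomolov--McQuillan--Bost, and ultimately rests on producing global sections of $S^{[i]}\cF^*\otimes\cO_X(jH)$ with $i/j$ arbitrarily large.

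The first step is a standard reduction to $X$ smooth via a resolution $\pi:\wX\to X$: lifting $\cF$ to a foliation $\wt\cF\subset T_{\wX}$, algebraic integrability transfers, and pseudoeffectivity of $\wt\cF^*$ implies that of $\cF^*$ after absorbing the exceptional divisor into the ample twist in Definition~\ref{def psef}. Next, Frobenius integrability produces, through a general $x\in X$, a formal leaf $\hat L_x \subset \hat X_x$ of dimension $r := \mathrm{rk}(\cF)$. Let $L_x$ denote its Zariski closure in $X$. The hypothesis that $\cF$ is not algebraically integrable means precisely that $\dim L_x > r$ for $x$ in a dense subset of $X$, and as $x$ varies the $\hat L_x$'s fit into a formal subscheme of $X\times X$ whose Zariski closure $\Gamma \subset X\times X$ has first projection with fibers of dimension $>r$.

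The core of the argument is to read off sections of $S^{[i]}\cF^*\otimes\cO_X(jH)$ from this picture. The sheaf of formal functions on $L_x$ vanishing along $\hat L_x$ is nontrivial at every finite order, because $\hat L_x$ has strictly smaller dimension than $L_x$ but is infinitely tangent to $\cF|_{L_x}$. By integrability, derivatives along $\cF$ of such formal functions remain formal functions vanishing along $\hat L_x$; so the leading symbol of a generator of the order-$i$ piece defines a nonzero section of $S^{[i]}\cF^*$ over an open subset of $L_x$. Globalizing over the family $\{L_x\}_{x\in X}$ and pushing forward to $X$ produces sections of $S^{[i]}\cF^*\otimes\cO_X(jH)$ after a bounded ample twist $jH$ needed to extend from the family to all of $X$.

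The main obstacle I anticipate is the quantitative comparison between $i$ and $j$: one must show that $i/j$ can be made arbitrarily large. This is where the original difficulty of Bost's algebraicity criterion lies---controlling the cost of extending sections from $L_x$ back to $X$ in terms of the positivity of $H$, as opposed to the essentially unlimited order of vanishing available on the formal leaf. Once this ratio is shown to be unbounded, Definition~\ref{def psef} delivers the desired contradiction.
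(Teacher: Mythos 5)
The paper itself does not give a proof of Theorem~\ref{BM}; it quotes Druel's Proposition~8.4 and refers the reader to Campana--P\u aun for a full exposition. The argument there is a \emph{direct} one: non-pseudoeffectivity of $\cF^*$ is first converted, via the duality between pseudoeffective classes and movable curve classes and the Harder--Narasimhan filtration with respect to a movable class, into strict positivity of (a subsheaf of) $\cF$ along a suitable movable class, after which the Bogomolov--McQuillan/Bost algebraicity criterion applies. Your proposal instead argues by contraposition and attempts to manufacture the required sections directly from the transcendence of the leaves; that strategy is logically permissible but, as written, it targets the wrong bundle.

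Here is the gap, concretely. Take a formal function $f$ on $L_x$ vanishing to exact order $i$ along the formal leaf $\hat L_x\subset L_x$, and let $\cI$ denote the ideal of $\hat L_x$ in $L_x$. The leading symbol of $f$ lives in $\cI^i/\cI^{i+1}\simeq S^i\big(N^*_{\hat L_x/L_x}\big)$, where $N_{\hat L_x/L_x}=T_{L_x}/T_{\hat L_x}$ is the \emph{normal} bundle of the leaf, a quotient of $T_{L_x}/\cF|_{L_x}$. In other words, the symbol naturally lands in a symmetric power of a piece of the conormal $N_\cF^*=(T_X/\cF)^*$, not of $\cF^*$. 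Your observation that vector fields in $\cF$ preserve the ideal $\cI$ is correct, but it only says that $\cF$ acts on $\cI^i/\cI^{i+1}$ by the Bott connection; it does \emph{not} produce a map into $S^i\cF^*$. Indeed, because $\cF$ is tangent to $\hat L_x$, derivatives along $\cF$ \emph{preserve} the order of vanishing rather than lower it, which is precisely why the leading symbol of $f$ dies on $S^i\cF$ and must therefore factor through the conormal. So the construction yields pseudoeffectivity-type information about $N_\cF^*$ (conormal of the foliation), which is an entirely different sheaf from $\cF^*$ (the dual of $\cF$), and which carries no direct bearing on the hypothesis of the theorem. Until this confusion is resolved, the mechanism you propose cannot produce the sections of $S^{[i]}\cF^*\otimes\cO_X(jH)$ that Definition~\ref{def psef} requires, and the contraposition collapses.
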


We refer to \cite[Section~4.1]{CP19} for a very nice exposition of the proof along with important generalizations. \\

The strategy to prove Step (I) should now be rather clear to the reader: we need to show that the foliations $\cF_j^*$ are not psef. In light of Lemma~\ref{sym stable}, the following theorem of Höring and Peternell \cite[Theorem~1.1]{HP} essentially provides the last missing piece.

\begin{thm}
\label{thm HP}
Let $X$ be a projective normal variety of dimension $n$ which is smooth in codimension two, let $H$ be an ample Cartier divisor  and let $\cF$ be a reflexive sheaf such that 
\begin{enumerate}[label=$(\roman*)$]
\item $c_1(\cF)\cdot H^{n-1}=0$. 
\item $\SkF$ is $H$-stable for every integer $k\ge 1$.
\end{enumerate} 
If $\cF$ is pseudoeffective, then 
\[c_1(\cF)^2\cdot H^{n-2}=c_2(\cF)\cdot H^{n-2}=0.\] 
\end{thm}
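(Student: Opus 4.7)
My approach combines a reduction to surfaces, the Bogomolov--Gieseker inequality for stable sheaves, and a pseudoeffectivity-to-nefness upgrade on the projectivization of $\cF$.

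First, I would reduce to $n=2$ via Mehta--Ramanathan. For $m\gg 0$ and general $D_1,\dots,D_{n-2}\in|mH|$, the surface $S:=D_1\cap\cdots\cap D_{n-2}$ lies in $\Xr$ (using smoothness of $X$ in codimension two); Mehta--Ramanathan's restriction theorem (in Flenner's form) ensures that $S^{k}(\cF|_S) = \SkF|_S$ remains $H|_S$-stable for every $k$. Pseudoeffectivity also descends: for $S$ general, a nonzero section of $\SiF\otimes\cO_X(jH)$ restricts to a nonzero section of $S^{i}(\cF|_S)\otimes\cO_S(jH|_S)$. The desired numbers $c_1(\cF)^2\cdot H^{n-2}$ and $c_2(\cF)\cdot H^{n-2}$ equal, up to the factor $m^{n-2}$, the Chern numbers $c_1(\cF|_S)^2$ and $c_2(\cF|_S)$ on $S$. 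Hence it suffices to prove the theorem for a smooth projective surface, with $\cF$ locally free of rank $r$.

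On the surface, two bounds come for free. Hodge index applied to the ample class $H$ and the $H$-orthogonal class $c_1(\cF)$ yields $c_1(\cF)^2\le 0$. The Bogomolov--Gieseker inequality for the $H$-stable $\cF$ yields $2r\,c_2(\cF)-(r-1)\,c_1(\cF)^2\ge 0$. To close the argument, I need the reverse inequality $c_1(\cF)^2 \ge c_2(\cF)$, which will come from pseudoeffectivity. Set $\pi \colon Y := \PP(\cF^*) \to S$ and $\xi := c_1(\cO_Y(1))$; pseudoeffectivity of $\cF$ is equivalent to pseudoeffectivity of $\xi$, and the Segre identities
\[
\pi_*\xi^{r-1}=1, \quad \pi_*\xi^r=-c_1(\cF), \quad \pi_*\xi^{r+1}=c_1(\cF)^2-c_2(\cF)
\]
reduce the target inequality to $\xi^{r+1}\ge 0$. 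My plan is to upgrade pseudoeffectivity of $\xi$ to nefness: if a curve $C\subset Y$ had $\xi\cdot C<0$, then any effective divisor $D\sim i\xi+j\pi^*H$ coming from a section of $\SiF\otimes\cO(jH)$ with $j/i$ arbitrarily small would satisfy $D\cdot C<0$, hence contain $C$; letting $j/i\to 0$ would pin $C$ to the common base locus of all sufficiently positive symmetric powers, producing a destabilizing subsheaf of some $\SkF$ and contradicting the stability hypothesis.

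Once $\xi$ is nef, $\xi^{r+1}\ge 0$ delivers $c_1(\cF)^2\ge c_2(\cF)$. Combined with Hodge index and Bogomolov, the chain
\[
\tfrac{r-1}{2r}\,c_1(\cF)^2 \;\le\; c_2(\cF) \;\le\; c_1(\cF)^2 \;\le\; 0
\]
collapses: the leftmost inequality forces $c_1(\cF)^2\ge 0$, which with Hodge index gives $c_1(\cF)^2=0$, and then both remaining bounds give $c_2(\cF)=0$. The main obstacle is precisely the nefness upgrade: a general pseudoeffective line bundle on $Y$ can have negative intersection with curves, and eliminating this possibility requires genuinely using the stability of \emph{every} $\SkF$ via a careful analysis of the birational geometry of $\PP(\cF^*)$ and of the correspondence between subvarieties of $Y$ and subsheaves of the symmetric powers of $\cF$ on $S$.
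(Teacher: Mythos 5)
Your overall plan — restrict to a general complete intersection surface, pass to the projectivization $Y=\PP(\cF^*)$, upgrade $\xi$ from psef to nef, and extract $c_1^2=c_2=0$ from numerical inequalities — is in the same spirit as Höring--Peternell, and your concluding chain of inequalities is a legitimate (and slightly more elementary) alternative to the paper's final step, which instead observes that once $\cF|_S$ is nef with numerically trivial determinant it is numerically flat, so $c_2(\cF|_S)=0$ by \cite[Corollary~1.19]{DPS94}. The real issue is the nefness upgrade, which you flag as "the main obstacle" but then dispose of with a mechanism that does not actually work. Membership of a curve $C$ in the stable base locus of the asymptotic system $\bigcup_{j/i\to 0}|i\xi+j\pi^*H|$ does \emph{not} produce a destabilizing subsheaf of some $\SkF$: there are plenty of stable bundles all of whose symmetric powers are stable and yet whose sections have a fixed base locus, so stability alone cannot be contradicted this way. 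What one can extract from a curve with $\xi\cdot C<0$ is that $C$ lies in the non-nef (diminished/restricted base) locus of $\xi$, and getting a contradiction from that requires genuine positivity theory, not subsheaf theory.

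The paper's route through this step is different and substantially more technical. It considers the non-nef locus of $\zeta:=c_1(\cO_{\PP(\cF^*)}(1))$ on the full $\PP(\cF^*)$ (not on the slice), invokes Paun's criterion \cite[Théorème~2]{Paun98} that a psef class is nef provided it is nef in restriction to each component $W$ of its non-nef locus, and shows by descending induction on $\dim W$ that $\codim_X(\pi(W))\ge 2$; after cutting with a general complete intersection surface $S$, such $W\cap\pi^{-1}(S)$ lies in finitely many fibers where $\zeta$ is ample, giving nefness of $\zeta_S$. The induction step uses two nontrivial inputs on a general complete intersection \emph{curve} $C$: (Fact 1) stability of all $S^k(\cF|_C)$ forces $\zeta_C$ to be \emph{ample} in restriction to any proper subvariety of $\PP(\cF|_C^*)$, generalizing a rank-two lemma of Mumford (this is \cite[Lemma~2.13]{HP}); and (Fact 2) a Nakayama--Zariski-type inequality $\zeta_C^r\ge a\,\zeta_C^{r-1}\cdot W_C$ for some $a>0$ (this is \cite[Lemma~3.4]{HP}). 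These two facts, combined with $\zeta_C^r=\deg(\cF|_C)=0$, yield the contradiction. None of this is captured by your "base locus produces a destabilizing subsheaf" sketch, which is the genuine gap.

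A small cosmetic point: with the convention $\pi_*\cO_{\PP(\cF^*)}(k)\simeq S^k\cF$, the Grothendieck relation on $\PP(\cF^*)$ is $\xi^r=\pi^*c_1(\cF)\xi^{r-1}-\pi^*c_2(\cF)\xi^{r-2}+\cdots$, so $\pi_*\xi^r=+c_1(\cF)$, not $-c_1(\cF)$. This does not affect the identity $\pi_*\xi^{r+1}=c_1(\cF)^2-c_2(\cF)$ or anything downstream.
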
 

\begin{rem}
\label{rem HP}
 Let us make a few comments. 

$1.$ This result fits in with the conjecture of Pereira and Touzet \cite{PT13}. We refer to \cite{CCP21} for analogous results in the smooth Kähler setting where only a small number of symmetric powers are required to be stable and to \cite[Theorem~6.1]{Dru16} for a previously established version of the theorem holding when the rank of $\cF$ is no greater than $3$. 

$2.$ Since $X$ is smooth in codimension two, the quantity $c_2(\cF)\cdot H^{n-2}$ is well-defined (e.g. one can define it unambiguously on any log resolution of singularities). Furthermore, if $H$ is very ample and $H_i\in |H|$ are general, then $S:=H_1\cap \cdots \cap H_{n-2}$ is smooth and the Chern number above coincides with $c_2(\cF|_S)$.  

$3.$ If $X$ has klt singularities and $c_1(\cF)^2\cdot H^{n-2}=c_2(\cF)\cdot H^{n-2}=0$, then $\cF|_{\Xr}$ is flat by \cite[Theorem~1.20]{GKP16}. The proof goes roughly as follows. Without loss of generality, one can assume that $X$ is maximally quasi-étale (i.e. the identity map of $X$ is maximally quasi-étale in the sense of Definition~\ref{mqe}). By the usual Kobayashi-Hitchin correspondence, $\cF|_S$ is (hermitian) flat, given by a representation $\pi_1(S)\to \U(r)$ where $r=\mathrm{rk}(\cF)$. By Goresky-MacPherson's version of Lefschetz's hyperplane theorem for singular varieties \cite[Section II.1.2]{GMbook}, the natural map $\pi_1(S)\to \pi_1(\Xr)$ is an isomorphism. Hence we get a flat bundle  $\cG$ on $\Xr$ extending $\cF|_S$; since $X$ is maximally quasi-étale, $\cG$ extends to a locally free flat sheaf on $X$. In particular all its Chern classes vanish, and all such extensions $\cG$ when $S$ moves live in a bounded family. This allows one to use a Bertini-type argument and conclude that $\cG\simeq \cF$ provided that $S$ is general. 
\end{rem}

\begin{proof}[A few elements of the proof of Theorem \ref{thm HP}]
In order to explain some ideas more clearly, let us assume that $\cF$ is locally free. Let $\pi : \PP(\cF^*)\to X$ be the projectivized bundle and let $\zeta:=c_1(\cO_{\PP(\cF^*)}(1))$, which is psef by assumption. \\

Let $S$ be a general complete intersection surface. If one can show that $\zeta_S:=\zeta|_{\pi^{-1}(S)}$ is nef, then $\det \cF|_S$ is nef too and the assumption that $c_1(\cF|_S)\cdot H|_S=0$ will imply $c_1(\cF|_S)=0$. Hence $\cF|_S$ is nef with numerically trivial determinant, i.e. $\cF|_S$ is numerically flat and $c_2(\cF|_S)=0$ by \cite[Corollary~1.19]{DPS94} so that the theorem would follow. \\

In order to show that $\zeta_S$ is nef, it is equivalent by \cite[Théorème~2]{Paun98} to show that the class $\zeta_S$ is nef in restriction to each irreducible component $W$ of the non-nef locus of $\zeta$. Now, if $\pi(W)$ has codimension at least two in $X$, then $W\cap \pi^{-1}(S)$ is contained in finitely many fibers of $\pi$ in restriction to which $\zeta$ is ample. So one reduces to showing 
\[\codim_X(\pi(W))\ge 2.\]

The proof of the above inequality is by descending induction on the dimension of $W$. We will sketch the argument when $\dim W=\dim \PP(\cF^*)-1$ which is the first nontrivial case. Argue by contradiction and pick such a $W$ with $\codim_X(\pi(W))=1$. Let $C$ be a general complete intersection curve. The bundle $\cF|_C$ is stable with trivial determinant on a curve, it is hermitian flat (e.g. by the Kobayashi--Hitchin correspondence). In particular, $\zeta_C$ is nef. The contradiction will follow from the following two key facts.\\

{\it Fact 1.} Since all symmetric powers of $\cF|_C$ are stable, the class $\zeta_C$ is {\it ample} in restriction to any proper subvariety of $\PP(\cF|_C^*)$. This is \cite[Lemma~2.13]{HP}, which generalizes the rank $2$ case due to Mumford. \\

{\it Fact 2.} Let $W$ as above and let $W_C:=W\cap \pi^{-1}(C)$. Then there exists $a>0$ such that
\[\zeta_C^r\ge a \zeta_C^{r-1}\cdot W_C.\]
This is a consequence of \cite[Lemma~3.4]{HP} given that $\zeta_C$ is nef.

The contradiction is now within reach. The LHS in the inequality above is simply $\zeta_C^r=\pi^*c_1(\cF|_C)\cdot \zeta_C^{r-1}=0$ but the RHS is positive by the first fact. We leave out the proof of the two crucial lemmas and refer the interested reader to \cite{HP}.  
\end{proof}

\begin{cor}
\label{cor alg int}
Let $X$ as in Setup~\ref{setup} such that $X$ is smooth in codimension two and satisfies $\widetilde q(X)=0$. Let $\cF_j$ be any factor of the tangent sheaf of the quasi-étale cover $Y\to X$ provided by Theorem~\ref{thm holonomy}. Assume additionally that
\begin{enumerate}[label=$(\roman*)$]
\item $X$ is projective.
\item $X$ is smooth in codimension two.
\end{enumerate}
Then $\cF_j$ has algebraic leaves. 
\end{cor}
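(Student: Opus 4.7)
The plan is to leverage Theorem \ref{BM}: it suffices to show that each cotangent sheaf $\cF_j^*$ is not pseudoeffective. First, I would collect the properties of $Y$ that will be needed: it is a compact Kähler variety with klt singularities and $c_1(Y)=0$ (all preserved under quasi-étale covers), it is projective as a finite cover of the projective variety $X$, it inherits smoothness in codimension two from $X$ by Nagata's purity (so $Y_{\mathrm{sing}}\subset f^{-1}(\Xs)$ has codimension at least three), and $\widetilde q(Y)=0$ because the composition of quasi-étale covers is quasi-étale. Fix an ample Cartier divisor $H$ on $Y$ and a factor $\cF_j$ as in Theorem \ref{thm holonomy}, with $n_j:=\mathrm{rk}(\cF_j)\ge 2$ and restricted holonomy $\SU(n_j)$ or $\Sp(n_j/2)$.

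Arguing by contradiction, suppose $\cF_j^*$ is pseudoeffective. I would apply Theorem \ref{thm HP} to $\cF_j^*$, verifying:
\begin{enumerate}[label=$(\roman*)$]
\item $c_1(\cF_j^*)\cdot H^{n-1}=-c_1(\cF_j)\cdot H^{n-1}=0$, since the holonomy of $\cF_j|_{Y_{\mathrm{reg}}}$ sits inside $\SU(n_j)$ or $\Sp(n_j/2)$ and so $\det(\cF_j)$ is hermitian flat with vanishing first Chern class;
\item $S^{[k]}(\cF_j^*)$ is $H$-stable for every $k\ge 1$, which is the verbatim analogue of Lemma \ref{sym stable}, using that the representations $S^k(V^*)$ remain irreducible under $\SU(n_j)$ and $\Sp(n_j/2)$, so that any destabilizing subsheaf would produce a parallel splitting contradicting irreducibility.
\end{enumerate}
Theorem \ref{thm HP} then yields $c_2(\cF_j^*)\cdot H^{n-2}=0$, and because $c_1(\cF_j)=0$ one has $c_2(\cF_j^*)=c_2(\cF_j)$, so the second Chern number of $\cF_j$ against $H^{n-2}$ vanishes as well.

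Remark \ref{rem HP}(3) then gives that $\cF_j|_{Y_{\mathrm{reg}}}$ is hermitian flat. Since $\widetilde q(Y)=0$, Remark \ref{rem flat} produces a further quasi-étale cover $g:Y'\to Y$ such that $g^{[*]}\cF_j\cong \cO_{Y'}^{\oplus n_j}$. Dualising and using that, by Theorem \ref{thm holonomy}, $\cF_j^*$ is a direct summand of $\Omega_Y^{[1]}$, one obtains $n_j\ge 2$ linearly independent elements of $H^0(Y'_{\mathrm{reg}},\Omega_{Y'}^1)$ which extend to reflexive $1$-forms on $Y'$ via Theorem \ref{thm extension forms}. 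This contradicts $\widetilde q(X)=0$ since $Y'\to X$ is quasi-étale, closing the argument.

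The main obstacle, as I see it, lies in verifying the stability hypothesis on reflexive symmetric powers of $\cF_j^*$ demanded by Theorem \ref{thm HP}: although conceptually this is the same holonomy-based argument as Lemma \ref{sym stable}, one has to carry it over to the dual representation and convert it into strong stability of reflexive sheaves on the singular variety $Y$. Once this is in place, the remaining ingredients (the numerical hypothesis on $c_1$, the flatness output of Remark \ref{rem HP}(3), and the contradiction via $\widetilde q=0$) assemble cleanly.
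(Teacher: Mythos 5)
Your proof is correct and follows essentially the same route as the paper: either $\cF_j^*$ fails to be pseudoeffective and Theorem~\ref{BM} applies directly, or it is pseudoeffective and one combines Lemma~\ref{sym stable}, Theorem~\ref{thm HP}, Remark~\ref{rem HP}(3) and Remark~\ref{rem flat} to reach a contradiction with $\widetilde q(X)=0$. The one point where you improve on the paper's (very terse) argument is in carefully tracking the duality: the paper says ``if $\cF_j$ is not psef'' whereas Theorem~\ref{BM} requires non-pseudoeffectivity of $\cF_j^*$, and Lemma~\ref{sym stable} is stated for $S^{[k]}\cF_j$ rather than $S^{[k]}\cF_j^*$; as you observe, the latter is equivalent (stability passes to duals, and $S^k(V^*)$ is irreducible under $\SU$ and $\Sp$ whenever $S^kV$ is), so there is no real gap, but your version makes this explicit.
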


\begin{proof}
If $\cF_j$ is not psef, then Theorem~\ref{BM} provides the conclusion. Otherwise, thanks to Lemma~\ref{sym stable} one can appeal to Theorem~\ref{thm HP} and the third item in Remark~\ref{rem HP} to deduce that $\cF_j|_{Y_{\rm reg}}$ is flat. By Remark~\ref{rem flat}, this implies that $\cF_j$ is trivial after étale cover, which contradicts the identity $\widetilde q(X)=0$. 
\end{proof}

\subsection{Splitting a quasi-étale cover}

The main result of this section is the following splitting result for algebraically integrable foliations. It is originally due to Druel, cf \cite[Proposition 4.10]{Dru16} where additional assumptions are made, both on the singularities of $X$ and the positivity of $K_{\mathcal F_i}$. More precisely, in {\it loc. cit.} $X$ is assumed to have canonical singularities, and the $K_{\mathcal F_i}$ are assumed to be $\Q$-linearly trivial. The version below is extracted from \cite[Theorem~C]{DGP}.
\begin{thm}
\label{split druel}
Let $X$ be a normal projective variety with klt singularities and let $$T_X = \bigoplus_{i\in I} \cF_i$$ be a decomposition of $T_X$ into 
involutive subsheaves with algebraic leaves. Then there exists 
a quasi-\'etale cover $f\colon Y \to X$ as well as a decomposition 
\[Y \simeq \prod_{i\in I} Y_i\] 
of $Y$ into a product of normal projective varieties
such that the decomposition $T_X = \bigoplus_{i\in I} \cF_i$ lifts to the canonical
decomposition $$T_{\prod_{i\in I} Y_i}= \bigoplus_{i \in I} \textup{pr}_i^*T_{Y_i}.$$
\end{thm}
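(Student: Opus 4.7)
The plan is to induct on $|I|$, reducing at once to the case where $T_X = \cF_1 \oplus \cF_2$ with both $\cF_1, \cF_2$ algebraically integrable; the general case then follows by iterating (the direct sum $\bigoplus_{i \ne j} \cF_i$ is itself algebraically integrable, its leaves being locally products of the individual leaves). For each $i \in \{1,2\}$, algebraic integrability of $\cF_i$ provides, via the Chow variety or the Hilbert scheme of closures of general leaves, a commutative diagram
\[
X \stackrel{p_i}{\longleftarrow} U_i \stackrel{q_i}{\longrightarrow} B_i
\]
with $B_i$ a normal projective variety, $p_i$ birational, $q_i$ equidimensional, and the general fibre of $q_i$ equal to the normalization of the closure of a general leaf of $\cF_i$. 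Setting $\pi_i := q_i \circ p_i^{-1}: X \dashrightarrow B_i$, transversality of $\cF_1$ and $\cF_2$ at a general point forces the combined rational map $\pi = (\pi_1,\pi_2) : X \dashrightarrow B_1 \times B_2$ to be birational onto a closed subvariety $B \subseteq B_1 \times B_2$.

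The core of the argument will be to show that, after a well-chosen quasi-\'etale cover of $X$, the map $\pi$ becomes an isomorphism onto a product $B_1' \times B_2'$, from which a product decomposition $Y \simeq Y_1 \times Y_2$ follows. The key local input is that the splitting $T_X = \cF_1 \oplus \cF_2$ yields an analytic product decomposition around a general point of $X$; this forces $B$ to be fibred over both $B_1$ and $B_2$ by finite \'etale covers away from the degeneration locus. The monodromy of these two families of covers (as the transverse parameter varies) is captured by finite quotients of $\pi_1(B_i^\circ)$ for suitable smooth open subsets $B_i^\circ \subset B_i$. Trivializing both monodromies yields finite \'etale covers $B_i' \to B_i^\circ$; normalizing the fibre product with $X$ and extending across codimension one produces a cover $Y \to X$ that splits as $Y \simeq Y_1 \times Y_2$, with $Y_i$ a normal projective variety arising as (a cover of) the general fibre of $\pi_{3-i}$.

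The hard part will be to guarantee that $Y \to X$ is genuinely quasi-\'etale, i.e.\ unramified in codimension one, and that the abstract product structure descends to a biholomorphic splitting rather than merely a birational one. The klt hypothesis is indispensable here on three counts: through Nagata's purity of branch locus, it reduces the check of quasi-\'etaleness to behaviour over $\Xr$; through extension of reflexive differentials (Theorem~\ref{thm extension forms}), it ensures that the relative tangent sheaves of $q_i$ coincide with $p_i^{[*]}\cF_i$ outside a set of codimension $\ge 2$; and through its preservation under quasi-\'etale covers, it allows one to close up covers constructed on open subsets to global covers of $X$. Once $Y \simeq Y_1 \times Y_2$ is established, the identification $T_Y = \mathrm{pr}_1^*T_{Y_1} \oplus \mathrm{pr}_2^*T_{Y_2} \simeq f^{[*]}\cF_1 \oplus f^{[*]}\cF_2$ is immediate from the construction of the $Y_i$ as leaves of the foliations.
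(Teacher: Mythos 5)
The overall plan---reduce to two factors, consider the rational maps $\pi_i\colon X\dashrightarrow B_i$ to the spaces of leaves, construct a quasi-\'etale cover via a fibre product so that the combined map to a product becomes an isomorphism---is in the same spirit as the argument of Druel followed in the paper. But two specific steps are not correct as stated.

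First, the claim that ``transversality of $\cF_1$ and $\cF_2$ at a general point forces $\pi=(\pi_1,\pi_2)\colon X\dashrightarrow B_1\times B_2$ to be birational onto its image'' is false. Transversality at a general point only shows that $\pi$ is \emph{unramified} there, hence generically finite; its degree onto the image equals the number of points in the intersection of the closure of a general leaf of $\cF_1$ with the closure of a general leaf of $\cF_2$, and this is typically greater than $1$. Concretely, take $X=(E_1\times E_2)/\langle\pm 1\rangle$ with $E_1,E_2$ elliptic curves and $\cF_i$ the two factor foliations. Each $B_i$ is $\PP^1$, the map $X\to\PP^1\times\PP^1$ has degree $2$, and indeed two general leaves from the two foliations meet in exactly $2$ points. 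The entire purpose of the quasi-\'etale cover is to kill this multiplicity, so you cannot presuppose birationality before constructing it. In the paper's argument, the cover $Z_1\to X$ is constructed explicitly as the quasi-\'etale extension of the normalization of $F_2^\circ\times_{Y_1}\Xr$ for a fixed general leaf closure $F_2$ of $\cF_2$; what makes this work is that it produces a \emph{rational section} $G_2$ of the induced map $\widetilde\varphi_1\colon Z_1\dashrightarrow F_2$, and a nontrivial claim (shown to be a fibre of $\widetilde\varphi_2$) then forces the combined map $\widetilde\varphi_1\times\widetilde\varphi_2$ to be birational. That section is the device your proposal is missing.

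Second, the passage from ``birational to a product'' to ``is a product'' is not automatic, and your phrase ``normalizing the fibre product with $X$ and extending across codimension one produces a cover $Y\to X$ that splits as $Y\simeq Y_1\times Y_2$'' glosses over exactly this gap: a normalization of a fibre product over a product base need not itself be a product. The paper handles this with a lemma of Koll\'ar--Larsen (Lemma~\ref{lemma:splitting}): for normal projective varieties with rational singularities, a birational map to a product that contracts no divisor forces a genuine product decomposition. This lemma is used twice in the paper---once to reduce to $\Q$-factorial terminal singularities via a crepant modification from BCHM (which in turn guarantees the $\varphi_i$ are almost proper and isomorphic in codimension one), and once at the very end---and there is no way around some step of this kind. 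Your sketch also leaves unaddressed why the monodromy-trivializing covers of the $B_i^\circ$, pulled back, yield something that is \'etale in codimension one over $X$; the paper avoids this issue by building the cover directly from the leaf geometry on $\Xr$, where the foliations are regular and the fibre product is manifestly quasi-finite and \'etale in codimension one.
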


The result above is difficult and rather technical. We will not attempt to provide a full proof, but rather hint at a few important ideas that underly the general strategy. Before we do so, let us state the following result due to Koll\'ar--Larsen which will be used several times: 

\begin{lem}[{\cite[Prop. 18]{kollar_larsen}}]
\label{lemma:splitting}
Let $X$, $Y$ and $Z$ be normal projective varieties, and let $\pi\colon X \dashrightarrow Y\times Z$ be a birational map that 
does not contract any divisor. Suppose in addition that $X$ has rational singularities. Then 
$X$ decomposes as a product $X\simeq Y' \times Z'$ and there exist birational maps $\pi_Y\colon Y' \dashrightarrow Y$ (resp. $\pi_Z:Z'\dashrightarrow Z$ such that 
$\pi = \pi_Y \times \pi_Z$.
\end{lem}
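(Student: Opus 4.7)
The strategy is to manufacture the splitting from the two natural rational projections $\pi_Y := p_Y \circ \pi : X \dashrightarrow Y$ and $\pi_Z := p_Z \circ \pi : X \dashrightarrow Z$. The plan is in three stages: first, extend $\pi_Y$ and $\pi_Z$ to honest morphisms via a rigidity argument; second, take their Stein factorizations to obtain candidate factors $Y'$ and $Z'$; third, identify $X$ with $Y' \times Z'$ and read off the birational maps $\pi_Y : Y' \dashrightarrow Y$ and $\pi_Z : Z' \dashrightarrow Z$.

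For the first stage, let $f : W \to X$ be a resolution of the indeterminacy of $\pi$ (for instance the normalization of its graph) and $g : W \to Y \times Z$ the induced morphism. Since $X$ has rational singularities, $f_* \mathcal{O}_W = \mathcal{O}_X$, and Koll\'ar's rigidity lemma asserts that $p_Y \circ g$ descends to a morphism $X \to Y$ if and only if it is constant on every connected component of every fiber of $f$; likewise for $p_Z \circ g$. The hypothesis that $\pi$ contracts no divisor enters here: if $p_Y \circ g$ were to vary on an $f$-fiber component $F$, then by slicing the positive-dimensional locus $p_Y(g(F)) \subset Y$ with suitable hypersurfaces and pulling back via $g$, one would produce a divisor in $X$ that must be $\pi$-contracted, a contradiction. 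This yields morphisms $\pi_Y : X \to Y$ and $\pi_Z : X \to Z$. Form their Stein factorizations $X \xrightarrow{\alpha} Y' \xrightarrow{\beta} Y$ and $X \xrightarrow{\alpha'} Z' \xrightarrow{\beta'} Z$, with $\alpha, \alpha'$ having connected fibers, $\beta, \beta'$ finite, and $Y', Z'$ normal projective.

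For the final identification, consider $\Phi := (\alpha, \alpha') : X \to Y' \times Z'$. Its composition with $\beta \times \beta'$ recovers $\pi$, so $\Phi$ is birational, and $\beta, \beta'$ must be finite birational morphisms---hence isomorphisms, as $Y$ and $Z$ are normal. Any $\Phi$-exceptional divisor would be $\pi$-exceptional, contradicting the hypothesis, so $\Phi$ is a proper birational morphism between normal projective varieties contracting no divisor. Combining this with the fact that $\alpha$ and $\alpha'$ are themselves the Stein factorizations of the compositions of $\Phi$ with the two projections from $Y' \times Z'$, an application of Zariski's main theorem forces $\Phi$ to be quasi-finite and therefore an isomorphism; the splitting $X \simeq Y' \times Z'$ with $\pi = \pi_Y \times \pi_Z$ follows. \textbf{Main obstacle.} The hardest step is the rigidity argument of the first stage: faithfully converting the global, divisorial hypothesis on $\pi$ into the local, fiberwise constancy required by Koll\'ar's rigidity lemma. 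A careful analysis of the components of the $f$-exceptional divisor and their images in $Y \times Z$, together with a Bertini-type slicing to extract a genuine $\pi$-contracted divisor from any variation of $p_Y \circ g$, is where the real work resides.
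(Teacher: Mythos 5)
Stage one is where the proof breaks down. The rigidity argument does not produce a $\pi$-contracted divisor: if $H\subset Y$ is a hypersurface and $D:=g^{-1}(H\times Z)\subset W$, then $D$ dominates the divisor $H\times Z$ of $Y\times Z$, so $D$ is not $g$-exceptional, and $f(D)$ is either not a divisor of $X$ (when $D$ is $f$-exceptional) or a divisor of $X$ that is \emph{not} contracted by $\pi$. Either way no contradiction appears, and the rigidity lemma cannot be invoked. Worse, the target of Stage one is simply too strong. If $\pi_Y:=p_Y\circ\pi$ extended to a morphism, then since its general fiber is birational to $Z$ and hence connected, the Stein factor $Y'\to Y$ would be finite and birational over a normal base, hence an isomorphism; similarly $Z'\simeq Z$, and your Stage three would then assert $X\simeq Y\times Z$ \emph{exactly}. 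But the lemma only claims that $Y'$ and $Z'$ are birational to $Y$ and $Z$, and this freedom is essential: $\pi$ may be a nontrivial small modification of $Y\times Z$, in which case the two rational projections on $X$ genuinely fail to be morphisms and the factors in the product decomposition of $X$ are distinct birational models of $Y$ and $Z$. Any approach that insists on morphisms to the given $Y$ and $Z$ is therefore doomed.

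Stage three compounds the same error. A proper birational morphism between normal projective varieties contracting no divisor need not be an isomorphism---small contractions are the standard counterexamples---and Zariski's main theorem gives connectedness of the fibers, not quasi-finiteness, so the sentence ``an application of Zariski's main theorem forces $\Phi$ to be quasi-finite'' has no content. A correct proof must instead build $Y'$ and $Z'$ as (normalized) parameter spaces for the closures of the fibers of the two \emph{rational} projections, say inside a suitable Chow or Hilbert scheme, and then use the hypothesis that $\pi$ contracts no divisor together with rational singularities to identify $X$ with the resulting product; this is exactly the point where the two hypotheses do their work, and it cannot be shortcut by a rigidity lemma applied to the original $Y$ and $Z$.
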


\begin{rem}
\label{rem split}
When the varieties at stake are merely Kähler, a similar statement holds under the more restrictive assumption that $\pi^{-1}$ is a morphism and $q(X)=0$, cf \cite[Lemma~7.5]{BGL}. For the reader's convenience, we outline below the main ideas of the proof.

Without loss of generality, one can assume that $Y$ and $Z$ are smooth. Moreover, since $H^1(X,\cO_X)=0$, it is not difficult to see that $H^1(Y,\cO_Y)=H^1(Z,\cO_Z)=0$. We consider a general fiber $Y_z=Y\times \{z\}$ and its image $Y''=f(Y_z)$, with normalization $\nu_Y:Y'\to Y''$, and similarly for $Z_y,Z',Z''$. We have a diagram
 \[
\begin{tikzcd}
Y\times Z \arrow[d, "g"] \arrow[r,"f"] & X \\
Y'\times Z'  & 
 \end{tikzcd}
\]
where $g$ is bimeromorphic. It is enough to show that $f$ and $g$ have the same fibers since it will imply that the bimeromorphic map $Y'\times Z' \dashrightarrow X$ is an isomorphism and conclude the proof of the theorem. 

Let $\omega$ be a Kähler class on $X$. By Künneth formula, we have 
\begin{equation}
\label{oab}
f^*\omega=p^*\alpha+q^*\beta
\end{equation} for some classes $\alpha \in H^2(Y,\R)$, $\beta\in H^2(Z, \R)$ and $p,q$ are the projections from $Y\to Z$ to the first and second factor respectively. Recall that $f$ induces maps $Y_z\to Y''$ (resp. $Z_y\to Z''$), so that restricting the identity \eqref{oab} to $Y_z$ (resp. $Z_y$) yields $\alpha = f^*\omega|_{Y''}$ (resp. $\beta=f^*\omega_{Z''}$) under the identification $Y_z\simeq Y$ (resp. $Z_y\simeq Z$) induced by $p$ (resp. $q$). But we have $f|_{Y_z}=\nu_Y \circ g|_{Y_z}$ hence $\alpha=g^*\nu_Y^*\omega|_{Y''}$ and $\beta=g^*\nu_Z^*\omega|_{Z''}$ so that 
\[f^*\omega= g^*(\omega_1+\omega_2)\]
where $\omega_1 = \nu_Y^*\omega|_{Y''}$ and $\omega_2=\nu_Z^*\omega|_{Z''}$ are both Kähler classes on $Y'$ and $Z'$ respectively. From that identity it is clear that $f$ contracts a subvariety $V\subset Y\times Z$ to a point iff $g$ does so too. 
\end{rem}

\bigskip

\begin{proof}[A few elements of the proof of Theorem~\ref{split druel}]

It is enough to consider the case of two foliations $T_X=\cF_1\oplus \cF_2$. For simplicity, we will assume that $X$ has canonical singularities and $K_X$ is linearly trivial.\\

The first reduction, which will be important for the next step, allows one to assume that $X$ has $\Q$-factorial terminal singularities. Indeed, thanks to \cite{BCHM}, there exists a {\it crepant} birational map $\wX\to X$ with $\wX$ having $\Q$-factorial terminal singularities. If there is a quasi-étale cover $\widetilde Y\to \wX$ such that $\widetilde Y \simeq \widetilde Y_1\times \widetilde Y_2$, then the finite part $Y\to X$ of the Stein factorization of $\widetilde Y\to X$ is quasi-étale and has a birational map $\widetilde Y_1\times \widetilde Y_2\to Y$, hence Lemma~\ref{lemma:splitting} allows one to conclude. \\

Since the foliations are algebraically integrable, they are induced by a rational map. More precisely, there is a unique normal complex projective variety $Y_i$ contained in the normalization of the Chow variety of $X$ whose general point parametrizes the closure of a general leaf of $\cF_i$, cf e.g. \cite[Lemma~3.2]{AD}. It induces a rational map $\varphi_i:X\dashrightarrow Y_i$ such that $\cF_i\simeq T_{X/Y_i}$.  Now since $X$ is assumed to have $\Q$-factorial terminal singularities, it is not too difficult to show that $\varphi_i$ is almost proper (i.e. its indeterminacy locus does not dominate $Y_i$), cf \cite[Lemma~3.12]{Dru16}. Moreover, since $\cF_i$ is a regular foliation on $\Xr$, $\varphi_i|_{\Xr}$ is well-defined. \\

Next, a key step is to show that there exist Zariski open sets $Y_i^\circ \subset Y_i^{\rm reg}$ and $X_i^\circ \subset X$ with $\codim_X (X\setminus X_i^\circ)\ge 2$ such that the induced map $\varphi_i|_{X_i^\circ}:X_i^\circ \to Y_i^\circ$ is a projective morphism with irreducible fibers.  This is the content of \cite[Proposition~3.13]{Dru16}; we admit this non-trivial and technical result. We would like to construct rational sections of the maps $\varphi_i$; this will be achieved after a quasi-étale base change as we explain below. \\

Let $F_i$ be a general fiber of $\varphi_i$, and define $F_i^\circ:=F_i\cap \Xr$. Let us consider the normalization $Z_1^\circ$ of $F_2^\circ \times_{Y_1} \Xr$ (and similarly for $Z_2^\circ$). The fibers of $Z_1^\circ\to \Xr$ over $x\in \Xr$ consists in the intersection of $F_2^\circ$ and leaf $\varphi_1^{-1}(x)$ of $\cF_1$ through $x$. Since $\cF_i|_{\Xr}$ are regular foliations, such fibers are finite. Moreover, it is elementary to compute a local normal form in codimension one for the morphisms $F_2^\circ\to Y_1$ and $\Xr\to Y_1$ from which one sees that  $Z_1^\circ\to \Xr$ is quasi-finite and étale in codimension one. That map is actually finite over an open subset of $\Xr$ whose codimension is at least two, hence it extends to a quasi-étale cover $f_1:Z_1\to X$.   

The maps $Z_1^\circ \to F_2^\circ$ and $F_2^\circ \to F_2^\circ \times_{Y_1} \Xr$ induce respectively an almost proper map $\widetilde \varphi_1 : Z_1\dashrightarrow F_2$ and a rational section $F_2\dashrightarrow Z_1$ of $\widetilde \varphi_1$ which image we denote by $G_2$. Let us now consider the composed map $\varphi_2\circ f_1: Z_1\dashrightarrow  Y_2$ whose Stein factorization we denote by $\widetilde \varphi_2: Z_1\dashrightarrow  \widetilde Y_2$.
 \[
\begin{tikzcd}
 Z_1 \arrow[d, dashed, "\widetilde \varphi_1"] \arrow[dr, "f_1"] \arrow[rr, "\widetilde \varphi_2", dashed] & &\widetilde Y_2 \arrow[dr] &\\
 F_2 \arrow[u, dashed, bend left] \arrow[dr] & X \arrow[d, " \varphi_1", dashed] \arrow[rr, "\varphi_2", dashed]  & & Y_2  \\
   & Y_1 & &
 \end{tikzcd}
\]
Clearly, $G_2$ is included in a fiber of $\widetilde \varphi_2$. With a little bit of work, cf \cite[Claim~4.13]{Dru16}, one can show that the inclusion is an equality. A general fiber $\widetilde F_1$ of $\widetilde \varphi_1$ intersect $G_2$ transversely at one point, thus it intersects a general fiber $\widetilde F_2$ of $\widetilde \varphi_2$ in the same fashion. This implies that the map
\[\widetilde \varphi_1\times \widetilde \varphi_2 : Z_1\dashrightarrow F_2\times \widetilde Y_2\]
is birational, and that we have birational identifications $F_2\simeq \widetilde F_2$ and $\widetilde Y_2 \simeq \widetilde F_1$. Now, the varieties $\widetilde F_i$ are terminal with trivial canonical bundle hence the induced birational map
\[Z_1\dashrightarrow \widetilde F_1\times \widetilde F_2\]
is isomorphic in codimension one. The theorem now follows from another application of Lemma~\ref{lemma:splitting}. 
\end{proof}

We have now all we need to prove the decomposition theorem for projective varieties. 

\begin{proof}[Proof of Theorem~\ref{thm BB klt} for projective varieties]
Let $X$ be as in Setup~\ref{setup}; assume additionally that $X$ is projective. As we explained in the first step of the proof of Theorem~\ref{split druel}, Lemma~\ref{lemma:splitting} enables us to reduce to the case where $X$ has terminal singularities; in particular $X$ is smooth in codimension two. 

Thanks to Proposition~\ref{torus cover}, one can assume additionally that $\widetilde q(X)=0$. Let $Y\to X$ be the quasi-étale cover provided by Theorem~\ref{thm holonomy}, with a decomposition $T_Y=\bigoplus_{j\in J} \cF_j$. By Corollary~\ref{cor alg int}, the foliations $\cF_j$ are algebraically integrable. The conclusion of the theorem can now be reached by applying successively Theorem~\ref{split druel} and Proposition~\ref{irreducible}.
\end{proof}

\section{The Kähler case}

In this section, we explain how to deal with the general Kähler case of Theorem~\ref{thm BB klt} by relying on the projective case previously established. This is done in \cite{BGL} and requires a fair amount of work, and we will here only give the outline of the proof along with the main important ideas. 

\subsection{Algebraic approximation}

The starting point is that one can deform locally trivially any $X$ as in Setup~\ref{setup} to a projective variety of the same type, cf \cite[Theorem~A]{BGL}. 

\begin{thm}
\label{alg approx}
Let  $X$ as in Setup~\ref{setup}. There exists a smooth germ $(B,0)$ as well as a locally trivial family $\pi: \cX\to B$ such that 
\begin{enumerate}[label=$(\roman*)$]
\item $X_0\simeq X$.
\item There is a sequence $b_i\to 0$ such that $X_{b_i}$ is projective. 
\end{enumerate}
\end{thm}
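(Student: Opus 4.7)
The plan is to deform $X$ within its locally trivial deformation space and locate projective fibers by a Kodaira-type density argument. First, I would consider the functor of locally trivial deformations of $X$, pro-represented by a germ $(\Dlt(X), 0)$ whose tangent space is $H^1(X, T_X)$. The decisive input here is the singular Bogomolov-Tian-Todorov unobstructedness theorem for compact Kähler klt varieties with $c_1(X) = 0$, which guarantees that $\Dlt(X)$ is smooth at $0$. After shrinking, I would take $(B,0)$ to be a smooth representative of this germ and $\pi \colon \cX \to B$ the tautological locally trivial family with $X_0 \simeq X$.

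Next, I would analyze the variation of Hodge theory along $\pi$. Because $\pi$ is locally trivial, the sheaves $R^k\pi_*\Z$ and $R^k\pi_*\C$ form local systems on $B$, so any class on $X$ parallel-transports to classes on nearby fibers. Combining the rationality of klt singularities with the extension of reflexive differentials (Theorem~\ref{thm extension forms}), one obtains a holomorphically varying Hodge decomposition on $R^2\pi_*\C$, and in particular a holomorphic subbundle $\mathcal{H}^{1,1}\subset R^2\pi_*\C\otimes \cO_B$ whose fiber at $b$ is $H^{1,1}(X_b)$. In this language, the Kähler class $[\omega_X]\in H^2(X,\R)$ extends by parallel transport to real classes on $X_b$ which remain Kähler for $b$ in an open neighborhood of $0$, using that the Kähler cone is open and that a locally trivial deformation preserves the analytic type of each singular point.

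The final step is a Kodaira-type density argument. A compact Kähler klt variety admitting a rational Kähler class is projective, by the singular Kodaira embedding theorem available in this generality. For each $\alpha \in H^2(X,\Q)$, the locus $\mathrm{NL}(\alpha):=\{b\in B:\alpha_b\in \mathcal{H}^{1,1}_b\}$ is a closed analytic subvariety of $B$. The key analytic input is that the period map $\mathcal{P}\colon B\to D$ sending $b$ to the Hodge filtration on $H^2(X_b)$ is sufficiently non-degenerate at $0$ so that $\bigcup_{\alpha\in H^2(X,\Q)} \mathrm{NL}(\alpha)$ is dense near $0$, and that for a suitable dense set of such $\alpha$ the corresponding class on $X_b$ can be arranged to land in the Kähler cone of $X_b$. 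This produces a sequence $b_i\to 0$ for which $X_{b_i}$ carries a rational Kähler class and is therefore projective.

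The main difficulty lies in making the density step rigorous in the singular setting: one needs a robust variation of Hodge structure on the locally trivial family $\cX \to B$, and one must verify the required transversality of the period map at $0$ — in effect, that $\Dlt(X)$ has enough infinitesimal directions to truly move the $(1,1)$-part of $H^2$. In the smooth case this is classical (Kodaira-Spencer density), but in the klt Kähler setting it relies crucially on the unobstructedness of $\Dlt(X)$ together with the extension properties of reflexive differentials across $\Xs$ ensuring that Hodge-theoretic data behave as if $X$ were smooth.
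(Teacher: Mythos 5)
The central gap in your argument is the appeal to a ``singular Bogomolov--Tian--Todorov unobstructedness theorem'' for $\Dlt(X)$. The paper explicitly states that unobstructedness of the full locally trivial deformation functor is \emph{not} known in this generality for klt Kähler varieties with $c_1=0$. Your first step therefore rests on a result that, as far as current knowledge goes, is open. The actual proof in \cite{BGL} sidesteps this: instead of deforming along all of $H^1(X, T_X)$, it restricts attention to a smaller deformation subfunctor whose tangent space is $H^1(X, E)$, where
\[
E \;=\; \bigcap_{\sigma \in H^0(X, \Omega_X^{[2]})} \ker(\sigma) \;\subset\; T_X
\]
is the common kernel of the reflexive holomorphic $2$-forms (equivalently, the orthogonal complement of the symplectic directions with respect to the singular Ricci-flat metric). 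The point is that $E$ inherits a symplectic structure, and that structure is what makes the $T^1$-lifting/BTT mechanism work and gives unobstructedness of this \emph{restricted} deformation space. One then checks that the composed map $H^1(X,E) \to H^2(X, \cO_X)$ induced by cup product and contraction with a Kähler class is still surjective, so that the Green--Voisin density criterion applies to this smaller base and produces projective fibers arbitrarily close to $0$.

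Your outline of the density step itself (Noether--Lefschetz loci, rationality of a Kähler class forcing projectivity, VHS on a locally trivial family via rational singularities and reflexive-differentials extension) is in line with the paper. But without replacing ``unobstructedness of $\Dlt(X)$'' by ``unobstructedness of the $E$-deformations, which still dominate $H^2(X,\cO_X)$,'' the argument does not go through as written: you cannot conclude $(B,0)$ is smooth, and smoothness of the base is essential both for the density argument and for the locally trivial family $\cX\to B$ to exist as stated.
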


\begin{rem}
\label{weak kahler}
The family $\pi: \cX\to B$ in the theorem above is in general not Kähler. However, one can show that there is a smooth $(1,1)$-form $\theta$ on $\cX$ such that $\theta|_{X_b}$ is a Kähler form for each $b\in B$. 
\end{rem}

Let us recall some terminology here.

Given a flat proper map $\pi: \cX\to B$, we say that $\pi$ is locally trivial if it is locally analytically trivial on the source. This means that given any $x\in \cX$ there exists an open euclidean neighborhood $U_x\subset \cX$ of $x$ such that if one sets $b:=\pi(x)$ and $V_b:=\pi(U_x)$, there exists a biholomorphism $F_x:U_x\to (U_x\cap X_b) \times V_b$  commuting with the projections to $V_b\subset B$. 

Now, given a flat map $\pi: \cX\to B$, we say that $\pi$ is topologically trivial on the base if for any $b\in B$, there exists an open euclidean neighborhood $V_b$ of $b$ along with an homeomorphism $\pi^{-1}(V_b)\to X_b\times V_b$ commuting with the projections to $V_b\subset B$. 

It is known that a locally trivial family enjoys the following properties:
\begin{enumerate}[label=$\bullet$]
\item $\pi$ is topologically locally trivial {\it on the base}. 
\item $\pi$ admits a simultaneous resolution of singularities, i.e. there exists a bimeromorphic proper surjective map $\widetilde \cX\to \cX$ with $\widetilde \cX$ smooth such that the composition $\widetilde \cX\to B$ is smooth.  
\item $\pi|_{\cX_{\rm reg}}: \cX_{\rm reg}\to B$ is topologically locally trivial on the base. In particular, any quasi-étale cover $Y_b\to X_b$ for some $b\in B$ (close to $0$) can be lifted to a global quasi-étale cover $\cY\to \cX$. 
\end{enumerate}

For proofs, we refer to \cite[Proposition~5.1]{AV19} and Corollary~2.27 and Lemma~3.7 in \cite{BGL}, respectively.

\begin{proof}[About the proof of Theorem~\ref{alg approx}]
If $X$ is smooth, this classical result is proved in two steps. First, one shows that deformations of $X$ are unobstructed (theorem of Bogomolov-Tian-Todorov). In particular, there is a miniversal deformation $\cX\to \mathrm{Def}^{\rm lt}(X)$ over a {\it smooth} germ $\mathrm{Def}^{\rm lt}(X)$ of dimension $H^1(X,T_X)$. Density of the locus of projective fibers is then showed using Green-Voisin criterion, that is one shows that given a Kähler class $\alpha\in H^1(X,\Omega_X^1)$, the composition of the cup product $H^1(X, T_X)\to H^2(X, T_X\otimes \Omega_X^1)$ with the contraction $H^2(X, T_X\otimes \Omega_X^1)\to H^2(X,\cO_X)$ is surjective. 

In the singular case, we use the same strategy, but we are actually unable to show that locally trivial deformation are unobstructed in general. Instead, we only consider special locally trivial deformations with tangent space corresponding tho the subspace $H^1(X, E)\subset H^1(X,T_X)$ where $E$ is the orthogonal $\bigcap_{\sigma \in H^0(X, \Omega_X^{[2]})} \mathrm{ker}(\sigma)$ with respect to a fixed singular Ricci flat Kähler metric. The advantage of working with $E$ is that it has a symplectic structure (which allows one to show unobstructedness) and still satisfies that $H^1(X,E)\to H^2(X,\cO_X)$ is surjective.
\end{proof}

\begin{cor}
\label{cor split away}
Let $X$ as in Setup~\ref{setup} such that $\widetilde q(X)=0$. Assume that no quasi-étale cover of $X$ is ICY or IHS (equivalently, $T_X$ is not strongly stable, cf Proposition~\ref{prop strongly stable}). Up to replacing $X$ with a quasi-étale cover, there exists a locally trivial deformation $\pi : \cX\to \bD$ over the unit disk $\bD\subset \C$ such that 
\begin{enumerate}[label=$(\roman*)$]
\item $X_0\simeq X$.
\item There are two locally trivial deformations $\cY^*\to \bD$ and $\cZ^*\to \bD$ with positive dimensional fibers such that 
\[\cX^*:=\cX\times_{\bD}\bD^* \simeq \cY^*\times_{\bD^*} \cZ^*.\] 
\end{enumerate}
\end{cor}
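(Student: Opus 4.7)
First I would exploit the non-strong-stability. Applying Theorem~\ref{thm holonomy} to $X$ (the hypotheses $\widetilde q(X)=0$ and ``no quasi-étale cover is ICY or IHS'' are preserved under quasi-étale covers), one obtains an orthogonal decomposition $T_X=\bigoplus_{j\in J}\cF_j$ into parallel subsheaves with respect to $\omega$. Proposition~\ref{irreducible} combined with the hypothesis that no quasi-étale cover is ICY or IHS forces $|J|\ge 2$: otherwise $X$ itself would be ICY or IHS. Regrouping the factors into two nonempty subsets of $J$, we may write $T_X=\cF\oplus\cG$ with both summands nonzero, orthogonal, and parallel.

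\textbf{Algebraic approximation on a disk.} Next, apply Theorem~\ref{alg approx} to produce a locally trivial family $\pi\colon\cX\to B$ with $X_0\simeq X$ and a sequence $b_i\to 0$ with $X_{b_i}$ projective. The projective locus inside $B$ is a countable union of closed analytic subsets of $B$; since it accumulates at $0$, at least one positive-dimensional irreducible component passes through $0$. Slicing by a general analytic disk through $0$ inside such a component, one may assume $B=\bD$ and the projective locus is dense in $\bD$.

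\textbf{Spreading the decomposition and splitting projective fibers.} The decomposition $T_X=\cF\oplus\cG$ is encoded, via parallel transport, by a flat representation of $\pi_1(X_{\rm reg})$ in which both summands are invariant. Because $\pi|_{\cX_{\rm reg}}$ is topologically locally trivial on the base, the inclusion $X_{\rm reg}\hookrightarrow\cX_{\rm reg}$ induces an isomorphism on $\pi_1$, so this representation extends, yielding a reflexive relative decomposition $T_{\cX/\bD}^{[*]}=\widetilde\cF\oplus\widetilde\cG$ which restricts fiberwise to the corresponding holonomy decomposition of each $X_t$. For $t$ in the dense projective locus, Corollary~\ref{cor alg int} gives that $\widetilde\cF|_{X_t}$ and $\widetilde\cG|_{X_t}$ are algebraically integrable (after possibly a further quasi-étale cover, which can be made global by the same $\pi_1$-invariance), and Theorem~\ref{split druel} then produces a quasi-étale cover $X_t'\to X_t$ together with a product decomposition $X_t'\simeq Y_t\times Z_t$ compatible with the foliations. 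The finite-index subgroup of $\pi_1(X_t^{\rm reg})$ defining this cover lifts, via the same invariance of $\pi_1$, to a global quasi-étale cover $f\colon\cX'\to\cX$ splitting each projective fiber.

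\textbf{Relative splitting and the main obstacle.} It remains to upgrade the fiberwise splittings over the dense projective locus to a relative product $\cX'^*\simeq\cY^*\times_{\bD^*}\cZ^*$ over $\bD^*$. Over the projective locus, the leaves of the lifted foliations $f^{[*]}\widetilde\cF,\, f^{[*]}\widetilde\cG$ define two relative algebraic families; using the relative Barlet space of compact cycles in $\cX'$ and a density argument, these assemble into two analytic families over a dense open subset of $\bD$, whose parameter spaces are the candidates $\cY^*$ and $\cZ^*$. One then constructs a bimeromorphic morphism $\cY^*\times_{\bD^*}\cZ^*\dashrightarrow\cX'^*$ over $\bD^*$ which contracts no divisor. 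Invoking fiberwise the Kähler-analog of Lemma~\ref{lemma:splitting} recorded in Remark~\ref{rem split} (which applies after arranging $q=0$ on the total space of the relevant cover, something guaranteed by $\widetilde q(X)=0$ together with local triviality) converts this bimeromorphic product into an honest isomorphism. The principal difficulty is precisely this last step: controlling the Barlet/Chow families at non-projective fibers, proving that the two foliations integrate \emph{simultaneously} across the whole punctured disk rather than only on the countable projective locus, and then running a deformation-theoretic argument that turns a dense family of pointwise product structures into a relative one. This is where local triviality of $\pi$ on the regular locus, together with the rigidity of the holonomy decomposition, must be combined carefully.
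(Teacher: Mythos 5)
Your proposal takes a genuinely different route from the paper, and the route has a serious gap precisely where the paper's proof does the hard work; let me explain both.

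The paper does \emph{not} start by spreading the tangent decomposition. It picks a single projective fiber $X_{b_0}$, checks that $\widetilde q(X_{b_0})=0$ and that $X_{b_0}$ is not (covered by) an ICY or IHS variety (using the constancy of $b\mapsto h^0(X_b,\Omega_{X_b}^{[p]})$ and \cite[Corollary~3.10]{BGL}), and applies the projective case of Theorem~\ref{thm BB klt} to write $X_{b_0}'\simeq Y_{b_0}\times Z_{b_0}$. It then uses that $h^1(Y_{b_0},\cO)=h^0(Y_{b_0},T_{Y_{b_0}})=0$ together with K\"unneth to show $\Dlt(Y_{b_0})\times\Dlt(Z_{b_0})\to\Dlt(X_{b_0})$ is an isomorphism, producing a local product over a euclidean neighborhood of $b_0$; it then invokes properness of the relevant component of the relative Douady space $\cD(\cX/B)$ to extend the product to a Zariski open subset of (a finite cover of) $B$, and finally slices by a general one-dimensional disk through the special point. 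Notice in particular that one projective fiber suffices; no density is required.

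The decisive flaw in your proposal is the spreading of the central-fiber decomposition $T_X=\cF\oplus\cG$ to a relative decomposition of $T_{\cX/\bD}$. You assert this decomposition is ``encoded via parallel transport by a flat representation of $\pi_1(X_{\rm reg})$ in which both summands are invariant.'' That is not true: the parallel decomposition comes from invariant subspaces of the full holonomy group $G=\Hol_x(\Xr,\omega)$ of the singular Ricci-flat metric on $X_0$, not from a flat representation of $\pi_1$. Only $G/G^\circ$ is a quotient of $\pi_1(\Xr)$; the connected part $G^\circ$ depends on the metric on the particular fiber, and the shape of the decomposition (number of summands, their ranks, holonomy type) is a priori allowed to jump from one fiber to another. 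Topological local triviality of $\cX_{\rm reg}\to\bD$ identifies the fundamental groups, but not the Hermite--Einstein structures. In fact, the paper only establishes a relative tangent-sheaf decomposition $T_{\cX/\bD}=A\oplus B$ \emph{after} Corollary~\ref{cor split away}, and crucially uses the product structure on $\cX^*$ to do so. So your plan inverts the logical order in a way that is not justified.

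Two further issues. First, the claim that the projective locus accumulating at $0$ forces a positive-dimensional irreducible component through $0$, and hence density on a disk, is simply not correct: a countable union of zero-dimensional analytic subsets can accumulate at $0$ without any such component. Moreover, if you could slice inside a component of the projective locus through $0$, the central fiber $X_0\simeq X$ would itself be projective, which is not assumed. Second, your final assembly step --- upgrading fiberwise splittings over countably many parameters to a relative product $\cX^*\simeq\cY^*\times_{\bD^*}\cZ^*$ via Barlet/Chow families --- is left as a sketch; you are right that this is where the real difficulty lies, and the paper's solution is exactly the Douady-space properness argument together with the $\Dlt$-unobstructedness input that your outline does not supply.
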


\begin{proof}[Sketch of proof]
We proceed in several steps. 

Consider the family $\cX\to B$ provided by Theorem~\ref{alg approx} and pick a projective fiber $X_{b_0}$. Since any quasi-étale cover of $X_{b_0}$ can be lifted to a quasi-étale cover of $\cX$ and since the functions $b\mapsto h^0(X_b, \Omega_{X_b}^{[p]})$ are constant (e.g. by Corollary~\ref{hodge duality}), we see that $\widetilde q(X_{b_0})=0$ and that $X_{b_0}$ is not covered by an ICY variety. Indeed, since $T_X$ is not strongly stable, Theorem~\ref{thm holonomy} yields non-trivial intermediate degree holomorphic forms on a cover of $X$. To exclude that $X_{b_0}$ is not (covered by) a IHS variety is a bit more subtle than a form counting argument and we refer to \cite[Corollary~3.10]{BGL}.\\

Therefore, up to passing to a cover, one gets by the projective case of Theorem~\ref{thm BB klt} a splitting $X_b=Y_b\times Z_b$ for some $b\in \mathbb D^*$, with $Y_b, Z_b$ having zero augmented irregularity. In particular, we have by Corollary~\ref{hodge duality} and Bochner principle the identity $h^1(Y_b, \cO_{Y_b})=h^0(Y_b, T_{Y_b})=0$ and similarly for $Z_b$. By Künneth formula, this implies that the map $\Dlt(Y_b)\times \Dlt(Z_b)\to \Dlt(X_b)$ is an isomorphism (which can be checked at the level of the tangent spaces and of the obstruction spaces). In particular, we get two families $\cY_U, \cZ_U$ over small euclidean neighborhood $U$ of $b\in \bD$ such that $\cX\times_{\bD}U \simeq \cY_U\times_\bD \cZ_U$.\\

The last step consists in extending the families $\cY_U$ and $\cZ_U$ to a Zariski neighborhood of $b\in \mathbb D$, maybe up after passing to a finite cover on the base. This relies on Douady spaces. More precisely, a computation of the dimension of the tangent space of the Douady space $\cD(X_b)$ at the point $[Y_b]\times \{z_b\}$ for $z_b\in Z_b^{\rm reg}$ shows that the latter is smooth of dimension $\dim Z_b$. In particular, there is a unique irreducible component $D_{Y_b}\subset \cD(X_b)$ passing through $[Y_b]\times \{z_b\}$. We can actually repeat the same operation but working over the base $B$ with relative Douady spaces $\cD(\cX/B)$ and find the unique component $D_Y\subset \cD(\cX/B)$ passing through that same point. A crucial input is the properness over $B$ of irreducible components of $\cD(\cX/B)$ whose general element parametrizes pure dimensional reduced subspaces, cf \cite[Proposition~4.2]{BGL}. This is essentially classical \cite{Fuj_Douady} except that $\cX\to B$ is only weakly Kähler in the sense of Remark~\ref{weak kahler}. In particular, $D_Y\to B$ is proper, and we can consider the Stein factorization $D_Y^\nu \to B_Y\to B$ where the last map is finite.  One can do the same thing for $\cZ_U$ and get $D_Z^\nu \to B_Z\to B$. 

The family $\cX\times_B U\to \cZ_U$ is the pullback of the universal family $\cF_Y\subset D_Y\times_B \cX\to D_Y$ by some injective map $\cZ_U\to D_Y$. It is not too hard to infer that the evaluation map $\cF_Y\to \cX$ becomes an isomorphism after taking the fiber product with some Zariski open subset of $B_Y$. Therefore we get  a Zariski open subset $B'\subset B_Y\times_B B_Z$ and two projections maps $\cX\times_B B' \to D_Y^\nu\times_{B_Y} B'$ and $\cX\times_B B' \to D_Z^\nu\times_{B_Z} B'$ which induce an isomorphism
\[\cX\times_{B}B' \to (D_Y^\nu\times_{B_Y} B') \times_{B'}(D_Z^\nu\times_{B_Z} B')\]
over a Zariski open subset of $B'$ and one can just pick a general one dimensional disk through the special point to get the desired family; we refer to \cite[Lemma~1.4]{BGL} for details. 
\end{proof}

\subsection{Splitting of the singular Ricci flat Kähler metric}
Let $X$ as in Corollary~\ref{cor split away} such that $K_X\sim \cO_X$. 

\subsubsection{Splitting of the tangent sheaf}
The family $\pi : \cX\to \bD$ is topologically trivial hence the local systems $R^k\pi_*\Q_{\cX}$ are constant. Let $\pi_1: \cY^*\to \bD^*$ and $\pi_2:\cZ^*\to \bD^*$ be the two families constructed in the corollary. Over $\bD^*$, we have a splitting
\[ R^k\pi_*\Q_{\cX^*}=\bigoplus_{p+q=k} R^p {\pi_1}_*\Q_{\cY^*} \otimes R^q {\pi_2}_*\Q_{\cZ^*}\]
Since $R^k\pi_*\Q_{\cX^*}$ and ${\pi_2}_*\Q_{\cZ^*}$ have no monodromy, neither does $R^k {\pi_1}_*\Q_{\cY^*}$. Similarly, ${\pi_1}_*\Q_{\cY^*}$ having no monodromy implies that $R^k {\pi_2}_*\Q_{\cZ^*}$ has no monodromy either. In particular, the inclusion $j:\bD^*\to \bD$ yields an isomorphism of local systems
\begin{equation}
\label{loc syst dec}
 R^k\pi_*\Q_{\cX}=\bigoplus_{p+q=k} j_*R^p {\pi_1}_*\Q_{\cY^*} \otimes j_*R^q {\pi_2}_*\Q_{\cZ^*}.
 \end{equation}
In \cite[Proposition~5.1]{BGL}, it is explained how one can infer from \eqref{loc syst dec} that if $m=\dim \cY^*-1$ then $j_*{\pi_1}_*\Omega_{\cY^*/\bD^*}^{[m]}$ is a sub-vector bundle of $\pi_*\Omega_{\cX/\bD}^{[m]}$. In particular, one can find a relative reflexive $m$-form $\tau_Y \in H^0(\cX, \Omega_{\cX/\bD}^{[m]})$ which specializes to a trivialization of $K_{Y_t}$ for any $t\neq 0$ and does not identically vanish on $X$. One can perform the same argument to get $\tau_Z$ and see that $\tau_Y\wedge \tau_Z$ is a trivialization of $K_{\cX/\bD}$. Defining $A:=\mathrm{rad}(\tau_Z)$ and $B:=\mathrm{rad}(\tau_Y)$ yields a decomposition
\begin{equation}
\label{tangent split}
T_{\cX/\bD} \simeq A\oplus B
\end{equation}
into foliations such that $A|_{\cX^*}\simeq \mathrm{pr}_{\cY^*}^*(T_{\cY^*/\bD^*})$ and $B|_{\cX^*}\simeq \mathrm{pr}_{\cZ^*}^*(T_{\cZ^*/\bD^*})$. \\

By construction, the foliations $A_0, B_0$ have compact leaves and it is conceivable that this property alone would be enough to split (a cover of) $X_0$ into two pieces like in the projective case, cf Theorem~\ref{split druel}. However, due to the absence of a Kähler replacement of the latter result, we will in the following rely heavily on the fact that $X_t$ is split for $t\neq 0$. 

\subsubsection{Cohomological considerations}

Via the injective map $H^1(X_t, \PH_{X_t})\to H^2(X_t, \R)$ and the isomorphism $ R^2\pi_*\R_{\cX}=j_*R^2 {\pi_1}_*\R_{\cY^*} \oplus j_*R^2 {\pi_2}_*\R_{\cZ^*}$ obtained in \eqref{loc syst dec}, the fiberwise Kähler class $[\theta_t]$ yields two smooth sections $\alpha, \beta$ of the vector bundle $R^2\pi_*\R_\cX \otimes C^{\infty}_{\bD}$ over $\bD$ such that 
\begin{equation}
\label{ab}
[\theta_t]=\alpha_t+\beta_t
\end{equation}
for any $t\in \bD$, cf \cite[Lemma~6.11]{BGL}. 

Since $H^1(Y_t, \R)=H^1(Z_t, \R)=0$ for $t\in \bD^*$, one can see easily that there exist Kähler classes $\widetilde \alpha_t \in H^2(Y_t, \R)$ and $\widetilde \beta_t \in H^2(Z_t,\R)$ such that $\alpha_t=\pi_1^*\widetilde \alpha_t$ and $\beta_t=\pi_2^*\widetilde \beta_t$. In particular, for any $t\in \bD^*$ the singular Ricci flat Kähler metric $\omega_t\in [\theta_t]$ can be split as 
\begin{equation}
\label{split KE}
\omega_t= \pi_1^*\omega_{Y_t}+\pi_2^*\omega_{Z_t}
\end{equation}
where $\omega_{Y_t}\in [\widetilde \alpha_t]$ (resp. $\omega_{Z_t}\in [\widetilde \beta_t]$) is the singular Ricci flat Kähler metric in the given Kähler class. 

Finally, since $\alpha_t, \beta_t \in H^1(X_t, \PH_{X_t})$ for $t\neq 0$, we have 
\begin{equation}
\label{potentiels}
\alpha_0, \beta_0 \in H^1(X_0, \PH_{X_0})
\end{equation}
since the vector spaces $H^1(X_t, \PH_{X_t})\simeq \mathrm{ker}\big(H^2(X_t, \R)\to H^2(X_t, \cO_{X_t})\big)$ glue to a vector bundle on $\bD$, cf \cite[Lemma~6.12]{BGL}. 

\subsubsection{Splitting of the metric}
For any $t\in \bD$, the splitting $T_{X_t} = A_t\oplus B_t$ derived from \eqref{tangent split} is, over $X_t^{\rm reg}$, a splitting into orthogonal, parallel subbundles with respect to $\omega_t$ thanks to Theorem~\ref{thm poly}. In particular, one can decompose over $X_t^{\rm reg}$
\[\omega_t=\omega_{A_t}+\omega_{B_t}\]
where $\omega_{A_t}$ is a smooth, closed semipositive $(1,1)$-form such that $\ker(\omega_{A_t})=B_t$ and $\omega_{A_t}|_{A_t}$ is positive definite. When $t\neq 0$, the splitting above coincides with \eqref{split KE}. In particular, $\omega_{A_t}$ extends to a positive current with local (bounded) potentials. When $t=0$, that latter remains valid. 
\begin{lem}
\label{lem pot}
The smooth $(1,1)$-forms $\omega_{A_0}$ and $\omega_{B_0}$ on $X_{\rm reg}$ extend to positive currents with bounded local potentials. 
\end{lem}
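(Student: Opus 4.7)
The plan is to construct local bounded potentials for $\omega_{A_0}$ by approximation from the fibers $X_t$ with $t\neq 0$, where the global splitting $X_t\simeq Y_t\times Z_t$ furnishes bounded potentials directly from the Ricci-flat potential on $Y_t$; the statement for $\omega_{B_0}$ will then drop out by subtraction from $\omega_0$ using Theorem~\ref{thm EGZ}.

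Fix $x_0\in X_0$. Using the local triviality of $\pi:\cX\to\bD$, I would pick a biholomorphism $F:\mathcal{U}\to U\times V$ for some open $\mathcal{U}\subset\cX$ containing $x_0$, where $U\subset X_0$ is a Stein neighborhood of $x_0$ and $V\subset\bD$ is a small disk around $0$, with $F$ commuting with projection to $V$. Via $F$, each fiber $X_t\cap\mathcal{U}$ is canonically identified with $U$. By \eqref{potentiels}, one can choose smooth $(1,1)$-forms $\theta_A,\theta_B$ on $\cX$ whose fiberwise restrictions $\theta_{A,t}:=\theta_A|_{X_t}$ and $\theta_{B,t}:=\theta_B|_{X_t}$ represent the classes $\alpha_t$ and $\beta_t$ respectively; transported to $U$ via $F$, they form smoothly varying families of $(1,1)$-forms.

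For $t\in V\setminus\{0\}$, the identity $\omega_{A_t}=\pi_1^*\omega_{Y_t}$ on $X_t$ translates in the trivialization to a decomposition $\omega_{A_t}|_U=\theta_{A,t}+dd^c\phi_{A,t}$, where $\phi_{A,t}$ is a bounded $\theta_{A,t}$-psh function on $U$ obtained by pulling back a bounded local Ricci-flat potential on $Y_t$ through $\pi_1$ and subtracting a smooth potential of $\theta_{A,t}$. The hard part, and the main technical obstacle, will be to establish a uniform bound
\[
\|\phi_{A,t}\|_{L^\infty(U)}\le C\qquad\text{for all } t\in V\setminus\{0\},
\]
with $C$ independent of $t$. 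This amounts to a uniform $L^\infty$-estimate for singular Ricci-flat Kähler potentials along the locally trivial family $\cY^*\to\bD$, a consequence of the family version of the EGZ--P\u{a}un estimates for degenerate complex Monge--Amp\`ere equations: fiberwise Kähler classes and volumes vary continuously, and the Skoda/Ko\l{}odziej-type integrability exponents for the reference measures can be controlled uniformly in $t$.

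Granted the uniform bound, uniformly bounded quasi-psh functions form a normal family in $L^1_{\mathrm{loc}}$, so I extract a subsequence $\phi_{A,t_i}\to\phi_{A,0}$ as $t_i\to 0$, with $\phi_{A,0}$ bounded and $\theta_{A,0}$-psh on $U$. Since $\omega_{A_t}$ varies smoothly on $\cX^{\mathrm{reg}}$, the convergence is smooth on compact subsets of $U\cap X_0^{\mathrm{reg}}$, and continuity of $dd^c$ yields $\omega_{A_0}=\theta_{A,0}+dd^c\phi_{A,0}$ on $U\cap X_0^{\mathrm{reg}}$; hence $\omega_{A_0}$ extends to the positive current $\theta_{A,0}+dd^c\phi_{A,0}$ on $U$ with bounded local potential. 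For $\omega_{B_0}$, writing $\omega_{B_0}=\omega_0-\omega_{A_0}$ on $U\cap X_0^{\mathrm{reg}}$ and combining the bounded potential $\varphi_0$ of $\omega_0$ (Theorem~\ref{thm EGZ}) with the potential $\phi_{A,0}$ just constructed yields $\omega_{B_0}=\theta_{B,0}+dd^c\phi_{B,0}$ with $\phi_{B,0}$ bounded; positivity $\omega_{B_0}\ge 0$ makes $\phi_{B,0}$ a bounded $\theta_{B,0}$-psh function on $U\cap X_0^{\mathrm{reg}}$ that extends uniquely to $U$ since $X_0^{\mathrm{sing}}$ has codimension at least two in $X_0$.
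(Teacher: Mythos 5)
Your argument hinges entirely on the uniform bound $\|\phi_{A,t}\|_{L^\infty(U)}\le C$ independent of $t$, which you present as a consequence of a ``family version of the EGZ--P\u{a}un estimates.'' This is not an off-the-shelf result, and there is a concrete obstruction to it: the splitting $\cX^*\simeq\cY^*\times_{\bD^*}\cZ^*$ does not extend across $t=0$ (otherwise $X_0$ would split, which is the conclusion of the whole section), so the usual continuity of reference volume forms and Skoda/Ko\l{}odziej integrability exponents along the family has no reason to persist at $t=0$. Moreover, as $t\to 0$ the Kähler classes $\widetilde\alpha_t$ on the factors $Y_t$ degenerate towards a nef, non-Kähler limit (on $X_0$, $\alpha_0$ is semi-positive but not Kähler), which is exactly the regime in which $L^\infty$-estimates for degenerate Monge--Amp\`ere equations break down. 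So the ``hard part'' you flag is not merely technical; it is precisely the thing that is not known, and your argument does not close without it.

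The paper takes a fundamentally different route that avoids any uniform estimate. It first obtains a positive current with \emph{some} local potentials (not yet bounded): pull $\omega_{A_t}$ back to a simultaneous resolution $\widetilde\cX\to\cX$, extract a weak sequential limit $\widetilde T$ (which requires only bounded cohomology classes, not bounded potentials), use the injection $p^*R^2\pi_*\R_\cX\otimes C^\infty_\bD\hookrightarrow R^2\widetilde\pi_*\R_{\widetilde\cX}\otimes C^\infty_\bD$ together with \eqref{potentiels} to place $[\widetilde T]$ in $p_0^*H^1(X,\PH_X)$, and push $\widetilde T$ down to a positive current $T$ on $X$ with local potentials. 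Boundedness is then deduced a posteriori: writing $\phi=u+v$ with $\phi$ a bounded local potential of $\omega$ (Theorem~\ref{thm EGZ}) and $u,v$ psh local potentials of $\omega_{A_0},\omega_{B_0}$ (hence locally bounded \emph{above}), each of $u$ and $v$ is automatically bounded. Interestingly, your own last step (treating $\omega_{B_0}$ by subtraction from $\omega_0$) is in the spirit of the paper's argument; if you apply the subtraction trick symmetrically to \emph{both} $\omega_{A_0}$ and $\omega_{B_0}$ after first establishing the existence of (merely) local potentials, the uniform $L^\infty$ estimate becomes unnecessary.
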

This means that given any $x\in X$, there exists a neighborhood $U$ of $x$ and a bounded psh function $u$ on $U$ such that $\omega_{A_0}|_{U_{\rm reg}}=dd^c u|_{U_{\rm reg}}$ (same for $\omega_{B_0}$).

\begin{proof}[Sketch of proof]
We give the main ideas and refer to \cite[Proposition~6.13]{BGL} for details. In particular, we admit that we have smooth convergence $\omega_{A_t}\to \omega_{A_0}$ on $\Xr$ when $t\to 0$ under local trivializations of $\pi$.
Let $p:\widetilde \cX\to\cX$ be a simultaneous resolution of singularities and set $\widetilde \pi := \pi \circ p$. It is not difficult to see that one can extract weak sequential limits $\widetilde T$ on $\widetilde X$ of $p_t^*\omega_{A_t}$ when $t\to 0$. Each such limit is a closed, positive $(1,1)$-current on $\widetilde X$ which coincides with $p_0^*\omega_{A_0}$ away from the exceptional locus of $\wX\to X$. Since we have an injection of vector bundles
 \[p^*R^2\pi_*\R_{\cX}\otimes C^{\infty}_\bD \hookrightarrow R^2\widetilde \pi_* \R_{\widetilde \cX} \otimes C^{\infty}_\bD\]
 one infers from \eqref{potentiels} that the cohomology class $[\widetilde T]\in H^2(\wX, \R)$ actually satisfies $[\widetilde T]\in p_0^*H^1(X, \PH_X)$. Since $\widetilde T\ge 0$, this implies that $\widetilde T=p_0^*T$ for some positive current $T\ge0$ on $X$ with local potentials. In particular, $T$ provides an extension of $\omega_{A_0}$ to $X$ with local potentials. Such an extension is unique. Indeed, if $U$ is a neighborhood of a given point $x\in X$ and if $u$ is a pluriharmonic function on $U_{\rm reg}$, then $u$ extends to a pluriharmonic function on $U$ by Grauert's theorem. 
 
Now, let $U$ be a neighborhood of a given point and let $\phi$ (resp. $u,v$) be a local potential for $\omega$ (resp. $\omega_{A_0},\omega_{B_0}$) on $U$. We can assume that $\phi=u+v$. Since $\phi$ is bounded and $u$ is bounded above, $v=\phi-u$ is bounded below, hence bounded. Same goes for $u$, and the proof is complete.
\end{proof}

In summary, we have proved: 

\begin{prop}
\label{prop KE splitting}
Let $X$ as in Corollary~\ref{cor split away} such that $K_X\sim \cO_X$, and let $T_X=A_0\oplus B_0$ be the induced splitting of the tangent sheaf, cf \eqref{tangent split}. The singular Kähler-Ricci flat metric $\omega\in [\theta_0]$ can be decomposed as a sum 
\[\omega=\omega_{A_0}+\omega_{B_0}\]
of closed positive $(1,1)$-currents with bounded local potentials satisfying additionally the following identities over $\Xr$: 
\[\ker(\omega_{A_0})=B_0, \quad \mbox{ and} \quad  \ker(\omega_{B_0})=A_0.\]
\end{prop}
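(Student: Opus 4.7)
The approach is to first construct $\omega_{A_0}$ and $\omega_{B_0}$ smoothly on $\Xr$ from the parallel decomposition $T_{\Xr} = A_0|_{\Xr}\oplus B_0|_{\Xr}$, then extend them across $\Xs$ as closed positive currents with bounded local potentials by approximation inside the family $\pi\colon\cX\to\bD$ of Corollary~\ref{cor split away}.

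On $\Xr$, Theorem~\ref{thm poly} guarantees that the decomposition $T_X = A_0\oplus B_0$ from~\eqref{tangent split} consists of mutually orthogonal, parallel sub-bundles with respect to $\omega$. I therefore let $\omega_{A_0}$ (resp.~$\omega_{B_0}$) be the smooth closed semipositive $(1,1)$-form on $\Xr$ whose value at a pair of tangent vectors is $\omega$ evaluated on their $h$-orthogonal projections onto $A_0|_{\Xr}$ (resp.~$B_0|_{\Xr}$). Orthogonality immediately yields $\omega = \omega_{A_0} + \omega_{B_0}$ together with the kernel identities $\ker(\omega_{A_0}) = B_0$ and $\ker(\omega_{B_0}) = A_0$ over $\Xr$. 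For $t\neq 0$, the same construction recovers the product decomposition~\eqref{split KE}, so $\omega_{A_t}$ and $\omega_{B_t}$ are pullbacks of genuine singular Ricci flat Kähler currents on $Y_t$ and $Z_t$, hence extend to $X_t$ as closed positive currents with bounded local potentials.

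The main step is then Lemma~\ref{lem pot}. Fixing a local trivialization of $\pi$, the smooth convergence $\omega_t\to\omega$ on $\Xr$ forces $\omega_{A_t}\to\omega_{A_0}$ smoothly on $\Xr$ as $t\to 0$. Pulling back to a simultaneous resolution $p\colon\widetilde\cX\to\cX$, I would extract a weak sequential limit $\widetilde T$ of the closed positive currents $p_t^*\omega_{A_t}$. The limit is a closed positive $(1,1)$-current on $\wX_0$ agreeing with $p_0^*\omega_{A_0}$ off the exceptional divisor. Using the Künneth splitting of local systems~\eqref{loc syst dec} together with the bundle structure of $\bigsqcup_t H^1(X_t,\PH_{X_t})$ coming from~\eqref{potentiels}, one checks that $[\widetilde T]$ lies in $p_0^* H^1(X,\PH_X)\subset H^2(\wX_0,\R)$, which combined with positivity forces $\widetilde T = p_0^* T$ for a closed positive current $T$ on $X$ with local potentials; this provides the required extension of $\omega_{A_0}$, and the same argument produces $\omega_{B_0}$. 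Uniqueness of the extension follows from Grauert's theorem on extension of pluriharmonic functions across $\Xs$. Boundedness of potentials is then a consequence of the decomposition $\phi = u + v$ on a Stein neighborhood, where $\phi$ is a bounded local potential of $\omega$ (furnished by Theorem~\ref{thm EGZ}) and $u,v$ are psh potentials of $\omega_{A_0}, \omega_{B_0}$: each is bounded above, and is bounded below by $\phi$ minus the other, hence bounded.

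The hard part is the cohomological control on $[\widetilde T]$: a naive weak limit only provides a class in $H^2(\wX_0,\R)$, and refining this to membership in the much smaller subspace $p_0^* H^1(X,\PH_X)$ is what distinguishes a current with $L^\infty$ potentials from a more general closed positive current. This refinement is precisely where topological local triviality of $\pi$ on the base and the Künneth decomposition~\eqref{loc syst dec} enter essentially, and where the assumption $K_X \sim \cO_X$ via the relative reflexive forms $\tau_Y, \tau_Z$ pays off by producing a globally defined tangent splitting on the whole family rather than just on $\cX^*$.
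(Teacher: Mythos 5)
Your proposal is correct and follows the paper's own argument essentially step by step: the pointwise orthogonal decomposition of $\omega$ on $\Xr$ coming from the parallel splitting of Theorem~\ref{thm poly}, the identification with the product decomposition~\eqref{split KE} for $t\neq 0$, and the extension across $\Xs$ via Lemma~\ref{lem pot} (weak limits on a simultaneous resolution, cohomological control through~\eqref{loc syst dec} and~\eqref{potentiels}, Grauert for uniqueness, and the $\phi = u+v$ bounding argument). There is no substantive deviation from the paper's proof.
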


\subsection{Completion of the proof}
\label{sec completion}
Let $X$ as in Setup~\ref{setup}. Thanks to Proposition~\ref{torus cover}, one can assume that $\widetilde q(X)=0$. It $T_X$ is strongly stable, then one can conclude by Proposition~\ref{prop strongly stable}. Otherwise, we are in the situation of Corollary~\ref{cor split away} and it is enough to show that one can split $X\simeq Y\times Z$ maybe after a passing to a quasi-étale cover.  

As in the proof of the corollary, we introduce the irreducible component $D_Y\subset \cD(\cX/\bD)$ whose general element over $t\in \bD^*$ parametrizes $Y_t\times \{z_t\}$ with $z_t\in Z_t^{\rm reg}$. The component $D_Y$ is proper over $\bD$, and the evaluation map $e_Y:\cF_Y\to \cX$ of the associated universal family $\cF_Y\subset D_Y \times \cX$ is isomorphic over $\bD^*$. By properness, $e_Y$ is surjective. For dimensional reason, the hypersurface $X_0$ is not in the image of the exceptional locus of $e_Y$ hence it makes sense to consider its proper transform $(\cF_Y)_0^+$. We let $(D_Y)_0^+$ be the image of $(\cF_Y)_0^+$ under the projection map $\cF_Y\to D_Y$. So we have a diagram
 \begin{equation}
 \label{diag Y}
\begin{tikzcd}
 (\cF_Y)_0^+ \arrow[d] \arrow[r,"e"] & X \\
 (D_Y)_0^+ & 
 \end{tikzcd}
\end{equation}
and $e$ is surjective, generically one-to-one. Similarly, we define $(\cF_Z)_0^+$ and $(D_Z)_0^+$. Let $[V_0] \in  (D_Y)_0^+$. It is not hard to see that $V_0\cap \Xr$ is either empty or tangent to $A_0|_{\Xr}$. In the second case, $V_0$ is the closure of a leaf of the foliation $A_0$. In any case, we have the following result which is \cite[Lemma~7.1]{BGL}. Its proof is a bit technical and will be omitted here.  

\begin{lem}
\label{irr red}
Let $[V_0] \in  (D_Y)_0^+$. Then $V_0$ is irreducible and generically reduced. 
\end{lem}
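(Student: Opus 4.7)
The plan is to exploit the degeneration structure around $[V_0]$. I would choose a small analytic disk $\Delta \subset D_Y$ through $[V_0]$ such that $\Delta \setminus \{[V_0]\}$ is contained in the open dense locus of $D_Y$ parametrizing smooth general members $Y_t \times \{z_t\}$; such a disk exists by the very definition of the proper transform $(D_Y)_0^+$. Restricting the universal family $\cF_Y$ produces a proper map $\cF_\Delta \to \Delta$ whose general fibers are smooth irreducible copies of $Y_t$ with fixed Hilbert polynomial, and whose special fiber is a scheme supported on $V_0$.

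For generic reducedness, after possibly shrinking $\Delta$ and base changing to its normalization so that $\cF_\Delta \to \Delta$ becomes flat, the locus in $\Delta$ over which the fiber is generically reduced is open. Since every nearby fiber is smooth (hence reduced), this open subset contains $[V_0]$, so $V_0$ is generically reduced. Equivalently, the Hilbert polynomial is constant in the proper flat family and coincides with that of $Y_t$; no embedded or non-reduced component can appear at the generic points of $V_0$ without forcing a jump in the Hilbert polynomial.

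For irreducibility the argument is more delicate, since irreducibility is not preserved under general flat specialization (smooth conics can degenerate to a sum of two lines). I would combine three inputs. First, the reduced support of $V_0$ is tangent to the foliation $A_0$ on $\Xr$ (tangency being a closed condition under specialization), so each irreducible component of $V_0$ meets $\Xr$ in a leaf of $A_0$; since distinct leaves of $A_0$ on $\Xr$ are pairwise disjoint, two distinct components of $V_0$ can meet only inside $\Xs$, a set of codimension at least $2$ in $X$. Second, connectedness of the general fibers $Y_t$ is inherited by $V_0$ through the Stein factorization of the flat family. Third, the splitting $\omega = \omega_{A_0} + \omega_{B_0}$ from Proposition~\ref{prop KE splitting} provides a closed positive $(1,1)$-current with bounded potentials whose mass on $V_0$ equals the limiting value $\int_{Y_t} \omega_{A_t}^m$, while $\omega_{B_0}$ vanishes on $V_0$ by tangency to $A_0$. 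Balancing these three constraints should force $V_0$ to consist of a single $m$-dimensional irreducible component.

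The main obstacle is precisely the irreducibility part. Avoiding the classical failure mentioned above requires real input from the foliation geometry and from the positivity of $\omega_{A_0}$, not merely from flatness of the family. Converting the three inputs above into a clean contradiction is the technical crux: one has to move carefully between the Douady-theoretic description of $V_0$ (a subspace with prescribed Hilbert polynomial) and its leaf-closure description (as a union of closures of leaves of $A_0$), which is where the bimeromorphic and generically injective nature of the evaluation map $e_Y$, together with the fact that $X_0$ is not contained in its exceptional locus, must be used in an essential way.
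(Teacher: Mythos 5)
Your reduction to a flat family over a disk, and the observation that tangency to $A_0$ together with pairwise disjointness of leaves on $\Xr$ must enter the argument, are both sound. But there is a genuine gap in each half. For generic reducedness, the claim that ``the locus in $\Delta$ over which the fiber is generically reduced is open'' is simply not a theorem, and the Hilbert-polynomial remark is a misconception: a smooth conic degenerates flatly to a double line with the \emph{same} Hilbert polynomial, which is non-reduced along its whole support. More generally, flat limits of normal irreducible subvarieties can acquire multiplicity along entire components (ribbon degenerations), so no purely Douady-theoretic openness argument will give generic reducedness of $V_0$; the foliation and metric structure have to intervene already at this stage. (A smaller point: the nearby fibers are copies of $Y_t$, which are normal with canonical singularities but not smooth in general --- harmless, but the wording ``smooth hence reduced'' should be ``normal hence reduced and irreducible.'')

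For irreducibility you yourself concede the argument is incomplete, which is fair. The three ingredients you name --- tangency of the support to $A_0$ on $\Xr$, connectedness via Stein factorization over the normalized disk, and the cohomological mass constraint coming from $\omega_0 = \omega_{A_0} + \omega_{B_0}$ with bounded local potentials --- are indeed the correct tools, but the content of the lemma is precisely the work of assembling them. Connectedness plus pairwise disjointness of leaves over $\Xr$ is still compatible with several leaf-closures glued along $\Xs$, which has codimension at least two, so one must actually run the mass argument both to kill such extra components and to exclude the multiplicities that your first step failed to rule out; doing this uses in an essential way the fact that $\omega$, $\omega_{A_0}$, $\omega_{B_0}$ have bounded local potentials so that restrictions to $V_0$ and its components have well-defined masses depending only on cohomology classes. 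This is exactly why the notes defer to \cite[Lemma~7.1]{BGL} and label the proof technical. As written, your sketch is a reasonable outline of the relevant intuition but does not yet constitute a proof of either assertion.
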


As a consequence of the above lemma and using the singular Ricci-flat metric, one can show that any pair of leaves of the two foliations have finite intersection. 
\begin{lem}
\label{inter finie}
Let $[V_0] \in  (D_Y)_0^+$ and let $[U_0]\in (D_Z)_0^+$. Then $U_0\cap V_0$ is a finite set. 
\end{lem}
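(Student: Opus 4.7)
The plan is to combine a local product structure of the metric on $\Xr$ with a global Kähler volume argument.

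Step one exploits the metric splitting from Proposition~\ref{prop KE splitting}: on $\Xr$, $\omega$ decomposes as $\omega_{A_0}+\omega_{B_0}$ with $\ker(\omega_{A_0})=B_0$ and $\ker(\omega_{B_0})=A_0$. By real-analyticity of $\omega$ on $\Xr$ and de Rham splitting of the metric (cf.~\eqref{hol dec}), every simply connected open $\Omega\subset\Xr$ on which $A_0,B_0$ are subbundles admits a biholomorphic isometry $(\Omega,\omega|_\Omega)\simeq(\Omega_A,\omega_A)\times(\Omega_B,\omega_B)$ with tangent factors $A_0$ and $B_0$. Define $V_0^\circ\subset V_0$ (resp.~$U_0^\circ\subset U_0$) to be the open locus of smooth points lying in $\Xr$ with tangent space equal to $A_0$ (resp.~$B_0$); density follows from Lemma~\ref{irr red} together with the fact that $V_0$ is an $A_0$-leaf closure (and the analog for $U_0$). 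At any point $x\in V_0^\circ\cap U_0^\circ$, in a suitable such chart $V_0\cap\Omega$ becomes a disjoint union of horizontal slices $\Omega_A\times\{q_j\}$ while $U_0\cap\Omega$ is a disjoint union of vertical slices $\{p_i\}\times\Omega_B$, so $V_0\cap U_0\cap\Omega$ is a discrete subset of $\Omega$.

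Step two is a tangent computation at such a point: any irreducible component $W$ of $V_0\cap U_0$ meeting $V_0^\circ\cap U_0^\circ$ at some $w$ must satisfy $T_wW\subset T_wV_0\cap T_wU_0\subset A_{0,w}\cap B_{0,w}=0$, and irreducibility of $W$ forces $\dim W=0$. Hence $(V_0\cap U_0)\cap(V_0^\circ\cap U_0^\circ)$ is discrete, and finite by compactness of $V_0\cap U_0$.

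The main obstacle will be to rule out a positive-dimensional irreducible component $W\subset V_0\cap U_0$ entirely contained in the bad locus $\Xs\cup V_0^{\rm sing}\cup U_0^{\rm sing}$ (in particular to handle the degenerate subcase in which $V_0$ or $U_0$ is contained in $\Xs$, where step one is vacuous). My intended route is a Kähler volume argument: on a resolution $p\colon\widetilde W\to W$ the class $p^*[\omega]$ is Kähler, hence $\int_{\widetilde W}(p^*\omega)^{\dim W}>0$. Expanding $p^*\omega=p^*\omega_{A_0}+p^*\omega_{B_0}$ via Bedford--Taylor and exploiting that $\omega_{B_0}|_{V_0}$ and $\omega_{A_0}|_{U_0}$ both vanish smoothly on dense open subsets, each mixed term should collapse to zero, producing the desired contradiction. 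The subtle technical point is that closed positive currents with merely bounded potentials can carry singular mass along codimension-one analytic subsets while vanishing on a dense open set, so eliminating this possibility requires additional global input — most naturally the cohomological identity $[V_0]\cdot[U_0]=[V_t]\cdot[U_t]=1$ valid for $t\neq 0$, which follows from topological triviality of $\pi\colon\cX\to\bD$ (Remark before Corollary~\ref{cor split away}) together with the product structure $X_t\simeq Y_t\times Z_t$ of generic fibers. This identity is difficult to reconcile with any positive-dimensional excess contribution to the intersection and should close the argument.
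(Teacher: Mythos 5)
Your global strategy is the same as the paper's: reduce to showing $\int_{U_0\cap V_0}\omega^r=0$, decompose $\omega=\omega_{A_0}+\omega_{B_0}$, and exploit that each piece kills one of the two leaves. But the two preliminary steps (local de Rham factorization and tangent-space computation at nice points) are unnecessary detours — the volume argument alone already forces $r=0$ once you know $\omega_{A_0}|_{U_0}\equiv 0$ and $\omega_{B_0}|_{V_0}\equiv 0$.

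The genuine gap is at your ``subtle technical point.'' You worry that a closed positive $(1,1)$-current with bounded local potentials could vanish on a dense open set of $U_0$ yet still carry mass along a codimension-one analytic subset, and you propose to circumvent this with a cohomological identity $[V_0]\cdot[U_0]=1$. In fact the phenomenon you fear cannot occur: bounded local potentials force all Lelong numbers of the current to vanish, and by Siu's decomposition together with the support theorem (El Mir/Federer; or apply the structure theorem after resolving $U_0$ so the support becomes a divisor), a closed positive $(1,1)$-current supported on a proper analytic subset and with zero Lelong numbers must vanish identically. This is exactly what the paper invokes (``By the support theorem applied in a resolution of $U_0$, this contradicts the local boundedness of the potentials''), turning your ``dense open vanishing'' into global vanishing with one line. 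The cohomological-intersection route you sketch is both harder to set up for currents with merely bounded potentials and, as you yourself acknowledge, not actually carried out.

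You also leave open the degenerate case $U_0\cap X_{\rm reg}=\emptyset$ (you flag it as vacuous for Step 1, but the volume argument also needs $\omega_{A_0}|_{U_0}\equiv 0$ there, and the tangent-kernel description of $\omega_{A_0}$ only holds over $X_{\rm reg}$). The paper handles this separately: pick a general curve $C$ through $[U_0]$ in the Douady component, use that $e^*\omega_{A_0}$ vanishes on the nearby fibers $F_t$ for $t\neq 0$, and invoke Lemma~\ref{irr red} plus a pluripotential continuity lemma (\cite[Lemma~7.3]{BGL}) to propagate the vanishing to $F_0=U_0$, then again the support theorem. Without this your argument is incomplete whenever $U_0$ or $V_0$ lies entirely inside $X_{\rm sing}$.
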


\begin{proof}
Let $r:= \dim U_0\cap V_0$. Since $\omega$ has bounded potentials and $[\omega]$ is a Kähler class, it is enough to show that $\int_{U_0\cap V_0} \omega^r=0$. Since $\omega=\omega_{A_0}+\omega_{B_0}$ and each current in the decomposition admits bounded potentials, it will be enough to show that $\omega_{A_0}|_{U_0} \equiv 0$ and $\omega_{B_0}|_{V_0} \equiv 0$. By symmetry, it is enough to deal with $\omega_{A_0}|_{U_0}$.

If $U_0\cap \Xr\neq \emptyset$, then we see from Proposition~\ref{prop KE splitting} that $\omega_{A_0}|_{U_0}$ is supported on the proper analytic subset $U_0^{\rm sing}$. By the support theorem applied in a resolution of $U_0$, this contradicts the local boundedness of the potentials of the current. 

If $U_0\cap \Xr=\emptyset$, then we pick a general curve $C$ sending $0$ to $[U_0]\subset (D_Z)_0^+$ and get a pullback diagram
 \[
\begin{tikzcd}
\cF \arrow[d, "p"] \arrow[r,"e"] & X \\
C & 
 \end{tikzcd}
\]
For $t\in C$ general, we have $e^*\omega_{A_0}|_{F_t}=0$. Moreover, $p$ is smooth at a general point of $F_0=U_0$ by Lemma~\ref{irr red} above. Since $e^*\omega_{A_0}$ has bounded local potentials, we can appeal to the pluripotential theory lemma \cite[Lemma~7.3]{BGL} to infer that $e^*\omega_{A_0}$ vanishes in restriction to a Zariski open subset of $F_0$, hence everywhere thanks to the support theorem again. 
\end{proof}

Let $I:=(\cF_Y)_0^+\times_X (\cF_Z)_0^+$ be the universal intersection equipped with the evaluation map $I\to X$. There is a unique irreducible component $I^+$ of $I$ which dominate $X$. We have the following diagram 

 \[
\begin{tikzcd}
I^+ \arrow[d, "g"] \arrow[r,"f"] & X \\
(D_Y)_0^+\times  (D_Z)_0^+ & 
 \end{tikzcd}
\]

\begin{lem}
\label{wrap up}
The morphism $f$ is surjective and generically one-to-one. The morphism $g$ is finite, surjective and generically one-to-one. 
\end{lem}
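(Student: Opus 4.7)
The plan is to verify the four assertions---surjectivity of $f$, generic injectivity of $f$, finiteness and surjectivity of $g$, and generic injectivity of $g$---in this order; the last will be the main obstacle.

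First, I would establish the properties of $f$. Note that $(\cF_Y)_0^+$ and $(\cF_Z)_0^+$ are closed subvarieties of $\cF_Y, \cF_Z$, which are proper over $\cX$, so the fiber product $I$ and its irreducible component $I^+$ are proper over $X$; since $I^+$ dominates $X$ by construction, $f$ is surjective. For generic injectivity, I would use that the evaluation $e_Y\colon \cF_Y\to \cX$ is birational, isomorphic over an open locus whose complement does not contain the hypersurface $X_0$; thus the restriction $(\cF_Y)_0^+\to X$ is itself birational, and analogously for $(\cF_Z)_0^+\to X$. The fiber of $f$ over a generic $x\in X$ is the product of the (single-point) generic fibers of these two birational maps, giving generic injectivity.

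Next I would handle $g$. The fiber of $g$ over a pair $([V_0],[U_0])$ is identified via $f$ with the set-theoretic intersection $V_0\cap U_0 \subset X$, which is finite by Lemma~\ref{inter finie}. Properness of $g$ is inherited from the properness of $\cF_Y, \cF_Z$ over the Douady spaces, so $g$ is finite. For surjectivity I would argue by dimension count: since $f$ is birational, $\dim I^+ = \dim X$; the general fibers of $(\cF_Y)_0^+ \to (D_Y)_0^+$ and $(\cF_Z)_0^+ \to (D_Z)_0^+$ are the leaf closures $V_0, U_0$ of respective dimensions $\dim Y_t$ and $\dim Z_t$, so $\dim (D_Y)_0^+ \times (D_Z)_0^+ = \dim Y_t + \dim Z_t = \dim X$. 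Both $(D_Y)_0^+$ and $(D_Z)_0^+$ are irreducible, as each is the image of the proper transform of the irreducible hypersurface $X_0\subset \cX$ under the birational morphism $e_Y$ (resp.\ $e_Z$). Hence $g(I^+)$ is a closed irreducible subvariety of full dimension in the irreducible target, so it equals the target.

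The hard part is generic injectivity of $g$, for which I would invoke a specialization argument. The entire construction admits a relative version over $\bD$, and for $t\neq 0$ the product structure $X_t\simeq Y_t\times Z_t$ makes the foliations into the factors of this product: two generic leaves $Y_t\times\{z\}$ and $\{y\}\times Z_t$ meet transversely at the single point $(y,z)$, so the relative analogue of $g$ is an isomorphism over $\bD^*$. In intersection-theoretic terms, $V_t\cdot U_t = 1$ in $X_t$, an intersection number that is invariant under specialization along the flat families of cycles produced by the Douady parametrizations, whence $V_0\cdot U_0 = 1$ in $X$. Combined with the finiteness of $V_0\cap U_0$ (Lemma~\ref{inter finie}) and the fact that for a generic pair the intersection takes place in $\Xr$---where each proper intersection point contributes positive local multiplicity---the intersection $V_0\cap U_0$ consists of exactly one point for a generic $([V_0],[U_0])$, giving $\deg g = 1$.
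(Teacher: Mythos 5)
Your proof is correct and takes essentially the same route as the paper's. The only differences are cosmetic: you spell out the dimension count $\dim I^+ = \dim X = \dim\bigl((D_Y)_0^+\times(D_Z)_0^+\bigr)$ that makes surjectivity of the finite morphism $g$ onto the irreducible target explicit (the paper glosses over this), and you phrase the generic injectivity of $g$ via invariance of the intersection number $V_t\cdot U_t$ under specialization, whereas the paper argues the contrapositive---two intersection points of $V_0\cap U_0$ in $\Xr$ would deform to at least two points of $V_t\cap U_t$---but both reduce to the same local intersection-theoretic fact on the smooth locus.
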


\begin{proof}
Since the evaluation map $(\cF_Y)_0^+\to X$ in \eqref{diag Y} (and the same for $(\cF_Z)_0^+$) is generically one-to-one and surjective, so is $f$. Similarly, surjectivity of $(\cF_Y)_0^+\to (D_Y)_0^+$ (and the same for $(\cF_Z)_0^+$) implies that $g$ is surjective. The fibers of $g$ are finite thanks to Lemma~\ref{inter finie}.  

It remains to show that $g$ is generically one-to-one. Since $g$ is finite and $f^{-1}(X_{\rm sing})$ is a proper analytic subset of $I^+$, a general fiber of $g$ is made up of two leaves $V_0$ and $U_0$ of $A_0$ and $B_0$ such that $U_0\cap V_0 \subset \Xr$. We have to show that they only intersect in one point. If not, one could deform the leaves to the nearby variety $X_t\simeq Y_t\times Z_t$ while preserving at least two intersection points (cf \cite[Corollary~7.4]{BGL} for some more details), which is absurd. 
\end{proof}

We are now ready to conclude the proof of the main theorem. 
\begin{proof}[End of the proof of Theorem~\ref{thm BB klt}, Kähler case]
We borrow the notation set up at the beginning of Section~\ref{sec completion}. We need to show that $X$ splits accordingly to its tangent decomposition \eqref{tangent split}. By Lemma~\ref{wrap up}, the normalization of $I^+$ is isomorphic $Y\times Z$ where $Y$ (resp. $Z$) is the normalization of  $(D_Z)_0^+$ (resp. $(D_Y)_0^+$). Therefore we have a bimeromorphic, surjective morphism
\[f:Y\times Z\to X,\]
and the conclusion follows from the splitting result \cite[Lemma~7.5]{BGL} outlined in Remark~\ref{rem split}.
\end{proof}

\section{Further developments}
 In this last section, we would like to provide a few additional results as well as applications of the decomposition theorem and recent related results. 
 
\subsection{Fundamental groups}
\label{sec fg}
From a topological point of view, the major open question left is probably the following. 

\begin{conj}
\label{conj fg}
Let $X$ be an ICY or IHS variety. Then $\pi_1(\Xr)$ is finite. 
\end{conj}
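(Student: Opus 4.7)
The strategy is to reduce the conjecture to two nearly separate issues: triviality of the profinite completion $\hat\pi_1(\Xr)$, and residual finiteness of $\pi_1(\Xr)$. The first step should be essentially routine given the tools developed in these notes; the second is where the genuine difficulty lies.

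First, I would apply Corollary~\ref{mqe} to replace $X$ by a maximally quasi-\'etale cover, so that $\hat\pi_1(\Xr) \simeq \hat\pi_1(X)$ and every finite \'etale cover of $\Xr$ extends to a finite \'etale cover of $X$. The ICY/IHS hypothesis is preserved by quasi-\'etale covers by definition, so this reduction costs nothing.

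Next I would show that $\hat\pi_1(\Xr)$ is trivial. For any finite \'etale cover $\pi\colon Y \to X$ of degree $d$, the pushforward $\pi_{*}\cO_Y$ is a flat vector bundle of rank $d$ (its monodromy factors through a finite symmetric group), so its rational Chern classes vanish; combined with the projection formula on a resolution (valid since $X$ has rational singularities), this yields the standard multiplicativity
\[\chi(\cO_Y) \;=\; d \cdot \chi(\cO_X).\]
Since $Y$ is again ICY (resp. IHS) as a quasi-\'etale cover of $X$, Corollary~\ref{hodge duality} and the ICY/IHS hypothesis pin down $\chi(\cO_Y) = 1+(-1)^n$ in the ICY case and $\chi(\cO_Y) = \tfrac{n}{2}+1$ in the IHS case, with the same value for $X$; hence $d=1$ whenever $\chi(\cO_X) \neq 0$. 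This settles the entire IHS case and the even-dimensional ICY case. For odd-dimensional ICY one has $\chi(\cO_X) = 0$, and the same argument must instead be run with $\chi(\Omega_X^{[p]})$ for a suitable $1 \le p \le n-1$; this works whenever $\chi_{\mathrm{top}}(X) \neq 0$, and the remaining sub-case would presumably require an auxiliary Chern-number invariant to distinguish covers.

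The hardest step is upgrading $\hat\pi_1(\Xr) = \{1\}$ to $\pi_1(\Xr) = \{1\}$. The kernel $K := \ker\bigl(\pi_1(\Xr) \to \hat\pi_1(\Xr)\bigr)$ has no nontrivial finite quotient and, by Remark~\ref{rem flat}, no nontrivial finite-dimensional linear representation. Ruling out such a ``perfect'' group $K$ amounts to a linear Shafarevich-type statement for klt varieties, which is open even in the projective setting. A natural route would be to develop a singular analogue of Cheeger--Gromoll's splitting/compactification theorem for the universal cover of $(\Xr,\omega)$---in combination with Braun's Theorem~\ref{thm braun} to control the ends coming from singular strata---so as to establish compactness, and hence finiteness of $\pi_1$, of the universal cover. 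This analytic obstacle, stemming from the incompleteness of $(\Xr,\omega)$, is precisely what prevented the smooth Beauville--Bogomolov proof from transferring directly, and I expect it to remain the main bottleneck.
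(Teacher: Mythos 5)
The statement you were asked to prove is explicitly labelled a \emph{Conjecture} (Conjecture~\ref{conj fg}); the paper offers no proof of it, and in fact immediately after the statement the author catalogues what is currently known and what remains open. So there is nothing in the paper to compare your argument against, and a complete proof should not be expected.

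Your proposal is an honest roadmap rather than a proof, and you say so. Two observations. First, your Step~2 essentially reconstructs the known partial result recorded in the paper as item~$(i)$ of ``What is known'' (attributed to \cite[Corollary~13.3]{GGK}): $\chi(X,\cO_X)\neq 0$ for IHS varieties and for even-dimensional ICY varieties, and this forces finite \'etale covers to be trivial. For odd-dimensional ICY your fallback via $\chi(\Omega_X^{[p]})$ or $\chi_{\mathrm{top}}$ is only a heuristic: the ICY condition constrains $H^0(Y,\Omega_Y^{[p]})$ but not the higher cohomology, so nothing forces $\chi(\Omega_X^{[p]})\neq 0$, and the link between $\sum_p(-1)^p\chi(\Omega_X^{[p]})$ and $\chi_{\mathrm{top}}$ is delicate on singular spaces. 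Second, in Step~3 there is a small logical slip: Remark~\ref{rem flat} says that every finite-dimensional representation of $\pi_1(\Xr)$ has finite image, but this does not imply that the residual kernel $K=\ker(\pi_1(\Xr)\to\hat\pi_1(\Xr))$ has no nontrivial finite-dimensional representations, since a representation of $K$ need not extend to one of $\pi_1(\Xr)$. What one can say (and what the paper says in item~$(ii)$) is that $\pi_1(\Xr)$ itself is ``linearly finite''; ruling out an infinite, residually-trivial, linearly-finite $\pi_1(\Xr)$ is exactly the open bottleneck you correctly identify. In short: your Step~2 recovers known partial results, your Step~3 names the genuine open problem but does not solve it, and the conjecture remains a conjecture.
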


By Theorem~\ref{alg approx}, it is enough to show the conjecture for projective varieties. Moreover, Theorem~\ref{thm BB klt} shows that Conjecture~\ref{conj fg} is equivalent to asking that given any compact normal variety $X$ with klt singularities, $c_1(X)=0$ and $\widetilde q(X)=0$, then $\pi_1(\Xr)$ is finite. By Proposition~\ref{torus cover}, it would imply that any compact normal variety $X$ with klt singularities satisfying $c_1(X)=0$ is such that $\pi_1(\Xr)$ is virtually abelian.  

Note that Conjecture~\ref{conj fg} is a consequence of the more general abelianity conjecture formulated by Campana. \\

{\it What is known.}
\begin{enumerate}[label=$(\roman*)$]
\item If $X$ is an IHS variety (resp. an ICY variety of even dimension) then $\pi_1(X)$ is trivial, cf \cite[Corollary~13.3]{GGK}. The main point is that $\chi(X, \cO_X)\neq 0$, allowing to use results of Campana.
\item If $X$ is as in Setup~\ref{setup} with $\widetilde q(X)=0$, then any linear representation of $\pi_1(\Xr)$ over any field is finite. Moreover, for each $r\in \N$, $\pi_1(\Xr)$ has only finitely many $r$-dimensional complex representations up to conjugation, cf Remark~\ref{rem flat} and \cite[Corollary~13.10]{GGK}.
\item If $X$ is an ICY threefold, then $\pi_1(\Xr)$ is finite, cf \cite[Theorem~6.1]{BF24} and \cite{TianXu17}.
\end{enumerate}
\subsection{Some applications}

\subsubsection{Primitive symplectic varieties}
\label{PSV}

In the literature, one often encounters the notion of primitive symplectic variety, cf \cite[Section~3]{BL22} and references therein. Recall that a symplectic variety (in the sense of Beauville) is a normal Kähler variety $X$ with a holomorphic symplectic closed form $\sigma \in H^0(\Xr, \Omega_{\Xr}^2)$ extending holomorphically to a holomorphic $2$-form on a resolution $Y\to X$. Moreover, $X$ is called primitive if $H^1(X,\cO_X)=0$ and $H^2(\Xr, \Omega_{\Xr}^2)=\C \sigma$. 

It is immediate to see that primitive symplectic varieties have canonical singularities. Moreover, IHS varieties are primitive symplectic. As a corollary of the decomposition theorem, one can see that these two notions are tightly connected.

\begin{lem}
Let $X$ be a primitive symplectic variety in the sense above. Then $X$ is either a torus quotient $X=T/G$ or it is of the form $X=Y^k/G$ where $Y$ is an IHS variety and $G\subset \mathfrak S_k$ acts transitively. 
\end{lem}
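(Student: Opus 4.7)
The plan is to invoke the Beauville--Bogomolov decomposition theorem (Theorem~\ref{thm BB klt}) on $X$ and to collapse the resulting decomposition using the two primitivity hypotheses. Let $\pi\colon X'\to X$ be the resulting quasi-\'etale Galois cover with group $G$, so that
\[X'\simeq T\times \prod_{i\in I} Y_i\times \prod_{j\in J} Z_j.\]
By uniqueness of the canonical tangent decomposition (Theorem~\ref{thm poly}), $G$ preserves the flat factor $T$ and permutes the $Y_i$ and the $Z_j$ amongst themselves.

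The next step is to use a K\"unneth-type formula together with the vanishings $H^0(Y_i,\Omega_{Y_i}^{[p]})=0$ for $0<p<\dim Y_i$ (since each $Y_i$ is ICY of dimension $\ge 3$) and $H^0(Z_j,\Omega_{Z_j}^{[1]})=0$ (consequence of Bochner principle and the holonomy calculation, Proposition~\ref{irreducible}) to obtain
\[H^0(X',\Omega_{X'}^{[2]})\simeq H^0(T,\Omega_T^2)\oplus\bigoplus_{j\in J}\C\sigma_j,\]
with no mixed terms and no ICY contribution, since all potential cross-summands involve a factor having no reflexive $1$-forms. Writing $\pi^{[*]}\sigma=\sigma_T+\sum_j\lambda_j\sigma_j$ accordingly, the orthogonality of the tangent decomposition with respect to the singular Ricci-flat metric (Theorem~\ref{thm poly}), combined with the non-degeneracy of $\pi^{[*]}\sigma$, forces $\sigma_T$ to be symplectic on $T$ (whence $\dim T$ is even), every $\lambda_j$ to be nonzero, and no ICY factor to appear, \ie $I=\varnothing$.

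Now I would take $G$-invariants and use Corollary~\ref{hodge duality} together with the primitivity assumptions $h^0(X,\Omega_X^{[2]})=1$ and $h^1(X,\cO_X)=0$ to read
\[H^0(T,\Omega_T^2)^G\oplus\bigl(\bigoplus_{j\in J}\C\sigma_j\bigr)^G=\C\cdot\pi^{[*]}\sigma,\qquad H^0(T,\Omega_T^1)^G=0.\]
If $\dim T>0$, then $\sigma_T$ is already a nonzero $G$-invariant in the first summand, which thus exhausts the one-dimensional invariant space; but $\pi^{[*]}\sigma$ is $G$-invariant with $\lambda_j\neq 0$, so its projection to $\bigoplus_j\C\sigma_j$ is a nonzero $G$-invariant there unless $J=\varnothing$. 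This yields the torus case $X\simeq T/G$. If $\dim T=0$, then $X'\simeq\prod_{j\in J}Z_j$ and the one-dimensional invariants live in $\bigoplus_j\C\sigma_j$. Decomposing $J$ into $G$-orbits $J_1,\ldots,J_m$ and restricting the $G$-invariant element $\sum\lambda_j\sigma_j$ (with every $\lambda_j\neq 0$) to each $G$-stable summand $\bigoplus_{j\in J_a}\C\sigma_j$ produces $m$ linearly independent $G$-invariants, forcing $m=1$. Thus $G$ acts transitively on $J$; all $Z_j$ are then pairwise $G$-conjugate, hence isomorphic to a single IHS variety $Y$, and $X\simeq Y^k/G$ with $k=|J|$ and the $G$-action projecting transitively onto a subgroup of $\mathfrak{S}_k$.

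The main obstacle I expect is the final identification in the IHS case: a priori the $G$-action on $Y^k$ is by a mixture of a transitive permutation with symplectic automorphisms of $Y$, so to reach the literal statement $G\subset\mathfrak{S}_k$ one must either absorb the kernel of $G\to\mathfrak{S}_k$ into the quotient (possibly replacing $Y$ by a symplectic quotient that is still IHS) or read the containment loosely as saying that $G$ projects transitively onto $\mathfrak{S}_k$. A secondary technical point is justifying the K\"unneth decomposition for reflexive differentials on a product of possibly singular factors, which we can however reduce to the classical Hodge-theoretic K\"unneth on a common simultaneous resolution via Theorem~\ref{thm extension forms}.
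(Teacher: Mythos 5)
Your proof is correct and follows essentially the same route as the paper: pass to a quasi-\'etale Galois cover splitting as in Theorem~\ref{thm BB klt}, decompose the reflexive $2$-form across the factors (you via K\"unneth on a resolution, the paper via Bochner and the holonomy splitting, which are the same computation), use nondegeneracy of $\sigma$ to kill the ICY factors and to conclude that at most one of the torus block and the symplectic block is nontrivial, and in the symplectic case use uniqueness of the invariant $2$-form to force a single $G$-orbit of factors. The subtlety you flag at the end about the literal containment $G\subset\mathfrak S_k$ (the kernel of $G\to\mathfrak S_J$ could a priori act by symplectic automorphisms of the individual $Z_j$'s) is a genuine point; the paper addresses it only by invoking ``holonomy considerations'' from Section~\ref{sec rest hol} to assert $G_k\subset\mathfrak S_{J_k}$ directly, so your cautious reading is a fair reflection of the level of detail given there rather than a gap specific to your argument.
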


\begin{rem} A couple of comments. 

$\bullet$ If $X$ is smooth, then $X$ is already an IHS manifold by the main result of  \cite{Schwald}.

$\bullet$ There are plenty of such (singular) torus quotients. The simplest one is the singular Kummer surface, cf Example~\ref{sing kummer}, cf also \cite[Example~14.4]{GGK} for an example in dimension $4$ and \cite[Section~7]{Schwald} for higher dimensional examples provided with the help of GAP. 

$\bullet$ Any quotient $Y^k/G$ with $Y$ IHS and $G\subset \mathfrak S_k$ acting transitively is a primitive symplectic variety.
\end{rem}

\begin{proof}
Let $f:Y\to X$ be a quasi-étale Galois cover where $Y=T\times \prod_{i\in I} Y_i \times \prod_{j\in J} Z_j$ where $T$ is a torus, $Y_i$ are ICY varieties and $Z_j$ are IHS varieties. The existence of a such a cover $f$ is guaranteed by Theorem~\ref{thm BB klt}; the fact that $f$ can be chosen Galois follows e.g. from \cite[Lemma~2.8]{CGGN} since taking further covers does not change the splitting. Let $G$ be the Galois group of $f$. 

By the uniqueness property in Theorem~\ref{thm poly}, $G$ preserves $T$, $\prod_{i\in I} Y_i$ and  $\prod_{j\in J} Z_j$. By Bochner principle, $Y$ carries a unique $G$-invariant reflexive $2$-form $\sigma_Y$ and one can decompose 
\[\sigma_Y=\sigma_T+\sum_{i\in I} \sigma_i+\sum_{j\in J} \sigma_j\] where $\sigma_T\in H^0(T, \Omega_T^2)^G$, $\sigma_i\in H^0(Y_i,\Omega_{Y_i}^{[2]})=\{0\}$ and   $\sigma_j\in H^0(Y_i,\Omega_{Z_j}^{[2]})$.

Since $\sigma_Y$ is symplectic, one must have $I=\emptyset$. Since $\sigma_Y$ is unique and $G$ preserves $T$ and $\prod_{j\in J}Z_j$, one of the two factors must be trivial. If $J=\emptyset$, then $X$ is a torus quotient.

So we can assume that $Y=\prod_{j\in J} Z_j$. It follows from holonomy consideration (cf Section~\ref{sec rest hol}) that one can find a partition $J=\sqcup_{k\in K} J_k$ and a decomposition $G=\prod_{k\in K} G_k$ where $G_k\subset \mathfrak S_{J_k}$ acts (transitively) on $Z^k:=\prod_{j\in J_k} Z_j$ by permuting the factors. In particular, all $(Z_j)_{j\in J_K}$ are isomorphic. It also follows that one can decompose 
\[\sigma_Y=\sum_{k\in K} \sigma_k\] 
where $\sigma_k\in H^0(Z^k, \Omega_{Z^k}^{[2]})$ is $G_k$-invariant, hence $G$-invariant too. The uniqueness (up to scalar multiple) of $\sigma_Y$ implies that $|K|=1$ and the lemma follows.
\end{proof}

\subsubsection{Numerical charaterization of torus quotients}

Thanks to Yau's theorem, a compact Kähler manifold $X$ of dimension $n$ with $c_1(X)=0$ admits an étale cover $T\to X$ where $T$ is an $n$-dimensional complex torus if, and only if there exists a Kähler class $\alpha \in H^2(X,\R)$ such that $c_2(X)\cdot \alpha^{n-2}=0$.

This criterion was extended to projective varieties with klt singularities when $\alpha=c_1(H)$ is the class of an ample line bundle by Greb-Kebekus-Peternell \cite{GKP16} using slicing arguments when $X$ is smooth in codimension two. That restriction was later removed using the formalism of orbifold Chern classes by Lu-Taji \cite{LuTaji}. 

In the Kähler case such slicing arguments are not available. However, the decomposition theorem, i.e. Theorem~\ref{thm BB klt}, allows one (with extra work) to showing that ICY and IHS varieties have "positive second Chern class", which can be done separately taking advantage of the additional geometric properties. This was achieved in \cite{CGG} when $X$ is smooth in codimension two and in \cite{CGG2} for the general case even allowing a boundary. In summary, one has 

\begin{thm}[\cite{CGG,CGG2}]
Let $(X,\Delta)$ be a $n$-dimensional klt pair, where $X$ is a compact Kähler variety and $\Delta$ has standard coefficients, i.e. $\Delta=\sum_{i\in I}(1-\frac 1{m_i}) \Delta_i$ with $m_i\ge 2$ integers and $\Delta_i$ irreducible pairwise distinct. The following are equivalent: 
\begin{enumerate}[label=$(\roman*)$]
\item $c_1(K_X+\Delta)=0\in H^2(X,\R)$ and $c_2(X,\Delta)\cdot \alpha^{n-2}=0$ for some Kähler class $\alpha$. 
\item There exists a finite Galois cover $f:T\to X$ where $T$ is a complex torus and the branching divisor of $f$ coincides with $\Delta$. 
\end{enumerate}
\end{thm}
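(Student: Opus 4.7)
\smallskip

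\textbf{Proof proposal.} The direction (ii)$\Rightarrow$(i) is elementary. If $f\colon T\to X$ is a finite Galois cover with $T$ a complex torus whose branching divisor is $\Delta$, then the Riemann--Hurwitz formula (using that $\Delta$ has standard coefficients matching the ramification indices) yields $K_T = f^*(K_X+\Delta)$. Taking Chern classes gives $f^*c_1(K_X+\Delta) = c_1(K_T) = 0$, and finiteness of $f$ forces $c_1(K_X+\Delta) = 0\in H^2(X,\R)$. Multiplicativity of orbifold Chern numbers under ramified covers of this type gives
\[
\deg(f)\cdot c_2(X,\Delta)\cdot\alpha^{n-2} = c_2(T)\cdot(f^*\alpha)^{n-2}=0,
\]
hence the second vanishing.

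The substantive direction is (i)$\Rightarrow$(ii), for which I would proceed in four steps. First, I reduce to the case $\Delta=0$ and $K_X$ trivial: by a pair version of Theorem~\ref{abondance}, $K_X+\Delta$ is $\Q$-torsion, and one can construct a Kawamata cover $g\colon Y\to X$ adapted to the standard coefficients of $\Delta$ such that $Y$ has klt singularities, $K_Y\sim \cO_Y$, and the branch divisor of $g$ is exactly $\Delta$. The assumption (i) is preserved under this construction by orbifold Chern class multiplicativity, so I can assume $X=Y$ falls under Setup~\ref{setup} and satisfies $c_2(X)\cdot\alpha^{n-2}=0$.

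Second, I apply Theorem~\ref{thm BB klt} to $X$ to obtain a quasi-étale cover $X'\to X$ with
\[
X'\simeq T'\times \prod_{i\in I} Y_i \times \prod_{j\in J} Z_j,
\]
where $T'$ is a torus, each $Y_i$ is ICY and each $Z_j$ is IHS. The Kähler class $\alpha$ pulls back to $X'$ and decomposes by Künneth as a sum of pullbacks of Kähler classes from each factor. The vanishing $c_2\cdot\alpha^{n-2}=0$ similarly pulls back to $X'$ (up to a positive multiple) and, by the Künneth decomposition of $c_2(X')$, expresses itself as a sum of terms indexed by the factors.

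Third, and this is the heart of the argument, I would show that each non-flat factor must be absent. Concretely, one needs the strict positivity
\[
c_2(Y_i)\cdot\beta^{\dim Y_i-2}>0 \quad\text{and}\quad c_2(Z_j)\cdot\beta^{\dim Z_j-2}>0
\]
for any Kähler class $\beta$ on an ICY variety $Y_i$ (resp. IHS variety $Z_j$). Granting this, the decomposition of $c_2(X')\cdot (g^*\alpha)^{n-2}$ into non-negative pieces, together with the hypothesis that the total sum vanishes, forces $|I|=|J|=0$, so $X'$ is a torus. The result then follows from Bieberbach's theorem applied to $X'\to X$, taking the Galois closure to ensure the Galois property and matching the ramification profile to the standard coefficients of $\Delta$.

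The main obstacle is exactly this strict positivity of $c_2\cdot\beta^{n-2}$ on singular ICY and IHS varieties with respect to an arbitrary Kähler class. For smooth ICY or IHS manifolds, strict positivity is immediate from Chern--Weil theory: the Ricci-flat metric provided by Theorem~\ref{thm EGZ} has holonomy $\SU(n)$ or $\Sp(n/2)$, hence is not flat, so the pointwise Euler integrand of $c_2$ paired with $\omega^{n-2}$ is non-negative and not identically zero. In the singular setting one does not control the behavior of the Ricci-flat metric near $\Xs$, and the Chern--Weil integral on $\Xr$ need not compute $c_2(X)\cdot\alpha^{n-2}$ by itself. Circumventing this requires an approximation argument on a resolution of singularities (together with a careful choice of approximate Kähler-Einstein metrics as in the proof of Theorem~\ref{thm poly}), a control of boundary/exceptional contributions via orbifold or log Chern classes, and a Miyaoka-type inequality that is saturated only on torus quotients. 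For IHS varieties, one may exploit the Beauville--Bogomolov--Fujiki form and the Fujiki relation to rephrase positivity of $c_2\cdot\beta^{n-2}$ in terms of the Riemann-tensor-square of $\omega$, which is positive precisely because $\Sp(n/2)$-holonomy prevents it from being identically zero. The full details for the pair setting with standard coefficients are worked out in \cite{CGG,CGG2}.
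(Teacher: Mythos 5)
Since the paper does not give a proof here (it cites \cite{CGG,CGG2}) but only describes the strategy in the preceding paragraph, I am judging against that outline: your proposal matches it. You correctly reduce to $\Delta=0$ and $K_X$ trivial, apply Theorem~\ref{thm BB klt} to split a quasi-\'etale cover, and identify the key input as strict positivity of $c_2\cdot\beta^{n-2}$ on singular ICY and IHS factors, to be established separately using their respective geometric features — exactly what the paper attributes to \cite{CGG,CGG2}; your closing discussion of the main obstacle (lack of control of the Ricci-flat metric near $\Xs$, necessitating approximation on a resolution) is also the right diagnosis.
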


\subsubsection{Moishezon manifolds with trivial first Chern class}
Compact, non-K\"ahler manifolds with trivial first Chern class (or even trivial canonical bundle) show up naturally when performing small resolutions of singularities or small modifications starting from (possibly singular) compact K\"ahler varieties with trivial canonical bundle. There is abundant literature on the subject, and we refer to \cite[Section 3]{BCDG} and the references therein for a quick overview.

It makes sense to extend the notions of ICY and IHS manifolds (or varieties) given in Definition~\ref{defi icy ihs} to compact complex manifolds (or varieties) $X$ which are not necessarily K\"ahler. It is conjectured in \cite[Conjecture  3]{BCDG} that a compact complex manifold $X$ which is bimeromorphic to a K\"ahler manifold (sometimes refered to as a Fujiki manifold) and has trivial canonical bundle should be made up from irreducible K-trivial K\"ahler varieties (i.e., tori, Calabi–Yau or symplectic holomorphic) using small modifications, products and finite \'etale quotients. Moreover, $X$ should also satisfy a Beauville-Bogomolov type decomposition theorem. 

Based on Theorem~\ref{thm BB klt} and the existence of minimal models \cite{BCHM, HP16, DH24, DHP} the above conjecture was proved in \cite[Theorem~A]{BCDG} for Moishezon manifolds (i.e. compact complex manifolds bimeromorphic to a projective manifold) and Fujiki manifolds of dimension no greater than four.

\begin{thm}[\cite{BCDG}]
Let $X$ be a compact Fujiki manifold such that $c_1(X)=0\in {\rm H}^2(X, \R)$. Assume that one of the following holds:
\begin{enumerate}[label= $\circ$]
\item $X$ is Moishezon, or
\item $\dim X \le 4$.
\end{enumerate}
Then, there exists a finite \'etale cover $X'\to X$ and a decomposition
\[X'\simeq T\times \prod_{i\in I} Y_i \times \prod_{j\in J} Z_j\]
where $T$ is a compact complex torus, the $Y_i$'s are irreducible Calabi--Yau manifolds and the $Z_j$'s are irreducible holomorphic symplectic manifolds. 

Moreover, each factor $Y_i$ (respectively, $Z_j$) in the decomposition is bimeromorphic to a K\"ahler variety with terminal singularities which is irreducible Calabi--Yau (respectively, irreducible holomorphic symplectic).
\end{thm}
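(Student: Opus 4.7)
\emph{Proof sketch.} The plan is to reduce to the klt Kähler case (Theorem~\ref{thm BB klt}) by running an appropriate Minimal Model Program, and then to lift the resulting decomposition back to the smooth manifold~$X$.

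First, I would produce a bimeromorphic model $\phi:X\dashrightarrow X_{\min}$ where $X_{\min}$ is a $\Q$-factorial compact Kähler variety with terminal singularities and $K_{X_{\min}}\equiv 0$. In the Moishezon case this follows from \cite{BCHM} applied to any projective bimeromorphic model; in the Fujiki four-dimensional case it is the content of \cite{HP16,DH24,DHP}. Combined with abundance in numerical dimension zero (Theorem~\ref{abondance}) and an index-one quasi-étale cover, one may further assume $K_{X_{\min}}\sim\cO_{X_{\min}}$.

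Next, using Corollary~\ref{mqe}, one replaces $X_{\min}$ by a maximally quasi-étale cover. Because $X$ is smooth and $X_{\min}$ has terminal (hence rational) singularities, any common resolution $\wt X$ satisfies $\pi_1(X)\cong\pi_1(\wt X)\cong\pi_1(X_{\min})$ via Takayama's theorem, so this cover actually comes from a finite étale cover of~$X$. Applying Theorem~\ref{thm BB klt} to $X_{\min}$ then yields a finite quasi-étale cover $X'_{\min}\to X_{\min}$ with a splitting
\[X'_{\min}\simeq T\times\prod_{i\in I}Y_i\times\prod_{j\in J}Z_j,\]
where $T$ is a torus and the $Y_i,Z_j$ are Kähler ICY/IHS varieties with terminal singularities. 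Because $X_{\min}$ is maximally quasi-étale, this cover corresponds to a finite index subgroup of $\hat\pi_1(X_{\min})=\hat\pi_1(X)$, hence to a genuine finite étale cover $X'\to X$ sitting in a bimeromorphic correspondence $X'\dashrightarrow X'_{\min}$.

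It remains to show that the product structure descends to $X'$ itself with smooth factors. First one splits off the torus via the Albanese morphism of $X'$ together with Theorem~\ref{alb} applied on $X'_{\min}$, reducing to the case $T$ trivial. For the remaining factors, the tangent decomposition on $X'_{\min}$ pulls back through a common resolution to a decomposition of $T_{X'}$ into foliations whose general leaves are bimeromorphic to the corresponding $Y_i,Z_j$ and hence are algebraically integrable. A Kähler version of Theorem~\ref{split druel}, combined with the Koll\'ar--Larsen splitting lemma in the form of Lemma~\ref{lemma:splitting} and Remark~\ref{rem split}, then produces a decomposition $X'\simeq T\times\prod_iY_i'\times\prod_jZ_j'$ in which each $Y_i'$ (respectively~$Z_j'$) is a smooth manifold bimeromorphic to $Y_i$ (respectively~$Z_j$), delivering the moreover statement. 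The main obstacle is precisely this last step: one must show that the small modifications connecting $X'$ and $X'_{\min}$ among terminal K-trivial models can be performed factor by factor. In the Moishezon case this follows from the stability of the tangent sheaf decomposition under flops between terminal models; in the Fujiki four-dimensional case one exploits the low dimension to carry out an ad hoc analysis of the possible small birational modifications.
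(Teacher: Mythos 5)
Your high-level strategy --- pass to a terminal minimal model via the MMP (\cite{BCHM} in the Moishezon case, \cite{HP16,DH24,DHP} in low-dimensional Fujiki case), apply Theorem~\ref{thm BB klt} there, and transport the decomposition back to a smooth model --- is indeed the approach of \cite{BCDG}. However, the two steps where the transfer actually happens are not correctly handled.

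\emph{Lifting the quasi-étale cover.} You invoke a maximally quasi-étale cover of $X_{\min}$ and appeal to Takayama's isomorphism $\pi_1(X)\cong\pi_1(X_{\min})$. This is circular: the quasi-étale cover provided by Theorem~\ref{thm BB klt} corresponds to a finite-index subgroup of $\pi_1\big((X_{\min})_{\rm reg}\big)$, not of $\pi_1(X_{\min})$; passing to a maximally quasi-étale cover replaces $X_{\min}$ by a generally non-étale cover, so the identification with $\pi_1(X)$ is lost. The mechanism that actually works is different and simpler. Since $X$ is smooth (hence terminal) with $K_X\equiv 0$ and $X_{\min}$ is terminal with $K_{X_{\min}}\equiv 0$, the bimeromorphic map $X\dashrightarrow X_{\min}$ is an isomorphism in codimension one. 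Comparing the two codimension-two complements inside the smooth loci yields a canonical isomorphism $\pi_1(X)\cong\pi_1\big((X_{\min})_{\rm reg}\big)$. The quasi-étale cover from the decomposition theorem then literally corresponds to a finite étale cover $X'\to X$, and no maximally quasi-étale step is needed (or correct).

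\emph{The splitting of $X'$.} Once $X'$ is known to be bimeromorphic to the product $T\times\prod Y_i\times\prod Z_j$ via a map that contracts no divisor (again by terminality and K-triviality of both sides), Lemma~\ref{lemma:splitting} does the entire job in one stroke --- there is nothing for Theorem~\ref{split druel} to do, and the algebraic-integrability detour is a red herring since you already have a geometric product, not merely a decomposition of the tangent sheaf. What genuinely needs an argument, and what you leave as ``stability under flops'' and ``ad hoc analysis'', is that the Kollár--Larsen lemma is stated for projective varieties, whereas $X'$ is only Moishezon (respectively Fujiki). Remark~\ref{rem split} does not help here: it requires $\pi^{-1}$ to be a morphism, which is false in our situation. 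The actual work consists in checking that the proof of \cite[Prop.~18]{kollar_larsen} goes through in the Moishezon and (low-dimensional) Fujiki categories; your sketch does not do this.

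Finally, the theorem does not merely assert a product decomposition of $X'$: it says that the factors are honest ICY/IHS \emph{manifolds} in the extended (possibly non-Kähler) sense, and that the first factor is a genuine complex torus. Your argument stops at producing smooth factors bimeromorphic in codimension one to the Kähler ICY/IHS pieces. One still has to check, for instance, that a smooth Fujiki manifold isomorphic in codimension one to a torus (hence with trivial tangent bundle, by reflexivity) is a torus --- this uses Wang's classification of complex parallelizable manifolds and the Fujiki hypothesis --- and, similarly, that the Hodge-theoretic and fundamental-group conditions in the definitions of ICY/IHS pass through the small modification. None of this is addressed.
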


\subsection{Varieties with nef anticanonical bundle}

In the last few decades, the classification of projective or Kähler manifolds with nef anticanonical class has drawn a lot of attention, starting at least with seminal work by Demailly-Peternell-Schneider \cite{DPS94,DPS}. The following major result was first proved for projective manifolds by Cao-Höring \cite{CH19} relying on earlier progress by Cao \cite{CaoAlb}. The general Kähler case was later established by Matsumura-Wang-Wu-Zhang \cite{MWWZ25}. More precisely, they followed the general strategy of Cao-Höring and capitalized on the breakthrough \cite{Ou25} (see also \cite{CP25}) combined with the results in \cite{CH24}. 
\begin{thm}[\cite{CH19, MWWZ25}]
\label{CH19}
Let $X$ be a compact Kähler manifold such that $-K_X$ is nef. Then, there exists a locally trivial fibration
\[f:X\to Y\]
such that the fiber $F$ is rationally connected and $Y$ satisfies $c_1(Y)=0$. Moreover, the fibration becomes trivial after performing base change $\widetilde Y\to Y$ to the universal cover of $Y$. 
\end{thm}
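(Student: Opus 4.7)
The plan is to construct $f$ as the maximal rationally connected (MRC) fibration of $X$, and to upgrade it step by step: first to a morphism, then to a locally trivial fibration, and finally to a trivial one after base change to $\widetilde Y$.

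I would begin with the MRC quotient $\varphi\colon X\dashrightarrow Y_0$, whose general fibers are rationally connected. By the Graber--Harris--Starr theorem, $Y_0$ is not uniruled, so on a smooth model $\widetilde Y_0$ the canonical class $K_{\widetilde Y_0}$ is pseudoeffective by the BDPP theorem. The nefness of $-K_X$ together with positivity of direct images (a theorem of Cao, relying on Viehweg-type weak positivity of $\varphi_*\omega_{X/Y_0}^{\otimes m}$) forces $-K_{\widetilde Y_0}$ to be pseudoeffective as well, hence $c_1(Y_0)=0\in H^2(\widetilde Y_0,\R)$. This produces a candidate base $Y$ with trivial first Chern class, to which Theorem~\ref{thm BB klt} applies.

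The decisive step is to show that $\varphi$ extends to a locally trivial morphism $f\colon X\to Y$. The key input is again positivity of direct images: combining $c_1(Y)=0$ with $-K_X$ nef, one shows that each $f_*\omega_{X/Y}^{\otimes m}$ is numerically flat. A curvature computation for these direct images then forces the second fundamental form of $T_{X/Y}\hookrightarrow T_X$ to vanish, yielding a holomorphic splitting
\[
T_X\;\cong\; T_{X/Y}\,\oplus\, f^*T_Y.
\]
The complementary subbundle $f^*T_Y$ is involutive, and integrating it by parallel transport along curves in $Y$ produces local trivializations of $f$; in particular, $\varphi$ is already everywhere defined and equidimensional.

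For the last assertion, apply Theorem~\ref{thm BB klt} to $Y$: the universal cover is, after a finite étale cover, of the form $\C^k\times \prod_i Y_i\times \prod_j Z_j$ with the non-euclidean factors simply connected. The monodromy of the locally trivial fibration, classified by a representation of $\pi_1(Y)$ into the component group of $\mathrm{Aut}(F)$ (which is discrete because $F$ is rationally connected and $-K_X$ nef rigidifies deformations of $F$), becomes trivial on $\widetilde Y$; combined with the splitting of $T_X$ and an Ehresmann-type argument on each flat and simply connected factor, this produces an isomorphism $X\times_Y\widetilde Y\cong F\times \widetilde Y$ over $\widetilde Y$.

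The main obstacle is the local triviality step. Establishing numerical flatness of $f_*\omega_{X/Y}^{\otimes m}$ under the mere nefness of $-K_X$, and then squeezing out vanishing of the second fundamental form, is the technical core of the proof; it requires a careful analysis of the curvature of the Hodge-type metric on these direct images, and is significantly more delicate than the semi-ample analogue where such flatness is nearly automatic.
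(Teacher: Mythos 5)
The paper does not prove Theorem~\ref{CH19}; it is quoted as a black box from \cite{CH19}. Measured against the actual Cao--H\"oring argument, your skeleton (MRC quotient, $c_1$ of the base vanishes via positivity of direct images, then a flatness-to-local-triviality step, then Beauville--Bogomolov on the base) is the right one, but there is a concrete error and a wrong mechanism at the decisive step. The error: for a fibration $f$ with rationally connected general fiber $F$, one has $H^0(F,mK_F)=0$ for every $m\ge 1$, hence $f_*\omega_{X/Y}^{\otimes m}=0$ --- these sheaves carry no information at all. The numerically flat sheaves in \cite{CH19} are the \emph{anti-}pluricanonical direct images $V_m := f_*\cO_X(-mK_{X/Y})$ (suitably twisted), and proving their numerical flatness from $-K_X$ nef and $c_1(Y)=0$ is the technical core of that paper.

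The mechanism you propose for local triviality --- that numerical flatness forces the second fundamental form of $T_{X/Y}\hookrightarrow T_X$ to vanish, producing a splitting $T_X\cong T_{X/Y}\oplus f^*T_Y$, which one then integrates --- has the logic reversed, and I do not see how to run it directly. The actual route is: on $Y$ with $c_1(Y)=0$, numerical flatness of $V_m$ gives Hermitian flatness (Uhlenbeck--Yau / Kobayashi--Hitchin), so the relative anticanonical model of $X$ sits inside a \emph{flat} projective bundle $\PP(V_m^\vee)\to Y$; one then shows $X$ is preserved by the flat parallel transport, which together with rigidity of the rationally connected fiber yields the locally constant structure. The holomorphic splitting of $T_X$ is a \emph{consequence} of local triviality, not the tool to prove it. You also elide a genuine point: the MRC quotient is a priori only a rational map with possibly non-equidimensional fibers, and upgrading it to an everywhere-defined, equidimensional, smooth morphism onto a smooth base is a separate nontrivial step, again resting on positivity of direct images, rather than something that falls out automatically.
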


Given the Beauville-Bogomolov decomposition theorem, Theorem~\ref{CH19} implies that if $X$ is compact Kähler with $-K_X$ nef, then the universal cover $\wX$ of $X$ is isomorphic to $F\times \C^r \times \prod_{i\in I} Y_i \times \prod_{j\in J} Z_j$ where $Y_i$ are ICY manifolds and $Z_j$ are IHS manifolds. 

The above structure theorem was extended to projective klt pairs $(X,\Delta)$ with $-(K_X+\Delta)$ nef by Matsumura-Wang \cite{MWang23} after important work by Campana-Cao-Matsumura \cite{CCM}. The result reads as follows:

\begin{thm}[\cite{MWang23}]
\label{MW}
Let $(X,\Delta)$ be a projective klt pair such that $-(K_X+\Delta)$ is nef. Up to replacing $X$ with a quasi-étale cover, there exists a fibration
\[f:X\to Y\]
which is locally trivial (for the pair structure) and satisfies that the fiber $F$ is rationally connected and $Y$ is klt with $c_1(Y)=0$. Moreover, the fibration becomes trivial after performing base change $\widetilde Y\to Y$ to the universal cover of $Y$. 

In particular, if $c_1(K_X+\Delta)=0$, then $(X,\Delta)\simeq (F, \Delta|_F) \times Y$. 
\end{thm}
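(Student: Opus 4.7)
\textbf{Proof proposal for Theorem~\ref{MW}.}

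The strategy is to follow the blueprint of Cao--Höring, adapting it to the pair setting via recent results on positivity of direct image sheaves for log-canonical fibrations. First, I would construct a suitable MRC-type fibration $f : X \to Y$. After a small modification, one obtains an almost holomorphic rational map whose general fiber $F$ is rationally connected. The fact that $Y$ inherits klt singularities (or can be chosen klt after birational modification) follows from the standard MRC formalism together with the canonical bundle formula for klt pairs, which also produces an effective boundary $\Delta_Y$ on $Y$ with $K_X+\Delta \sim_{\Q,f} 0$ and $f^*(K_Y+\Delta_Y) \sim_{\Q} K_X+\Delta$ (up to moduli part). The nefness of $-(K_X+\Delta)$, combined with non-uniruledness of $Y$ coming from rational connectedness of the fibers, together with the pseudoeffectivity of the moduli part (Campana--Cao--Matsumura), should force $K_Y + \Delta_Y \equiv 0$ and in fact $\Delta_Y = 0$ and $c_1(Y)=0$. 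At this stage, $Y$ is a klt variety with trivial first Chern class, so Theorem~\ref{thm BB klt} applies to it.

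The second, and main, step is to upgrade $f$ from a mere fibration to a locally trivial analytic fiber bundle. The key input is the numerical flatness of the direct image sheaves $f_*\mathcal{O}_X(m(K_{X/Y}+\Delta))$ and related positivity packages of Berndtsson--Păun and Cao--Păun: the combination $K_Y\equiv 0$ and $-(K_X+\Delta)$ nef sandwiches the curvature of these direct images so that they are Hermite--Einstein of slope zero with respect to any polarization. By the Greb--Kebekus--Peternell style characterization (or its Kähler avatar), these sheaves are hermitian flat, and the resulting projective structure on the relative pluricanonical bundle produces a local analytic trivialization of $f$. I expect \emph{this step} to be the main obstacle: one must control the behavior near the pair divisor $\Delta$, handle the non-smooth locus of $f$, and prove that numerical flatness of direct images is strong enough to guarantee local triviality of the geometric fibration (not just of the associated bundles). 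This is where the specific foliation-theoretic splitting machinery from \cite{Dru16, DGP}, combined with the pair version of Druel's integrability criterion, should intervene.

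Once local triviality is established, passing to the universal cover $\widetilde{Y} \to Y$ and combining with the decomposition $\widetilde Y \simeq \C^r \times \prod Y_i \times \prod Z_j$ granted by Theorem~\ref{thm BB klt} applied to $Y$, I would argue that the pulled-back bundle is trivial: on the $\C^r$ factor parallel transport of a fiber does the job; on each simply connected ICY or IHS factor, locally trivial families of (fixed) rationally connected pairs are parametrized by the Hom-scheme into the relevant moduli, and Bochner-principle style vanishings ($H^0$ of tangent sheaves of ICY/IHS varieties is controlled) give rigidity, forcing the family to be constant after possibly a further quasi-étale cover absorbed in the statement.

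Finally, for the "in particular" clause, when $c_1(K_X+\Delta)=0$, the identity $K_X+\Delta = f^*K_Y$ (up to $\Q$-equivalence, using $\Delta_Y=0$) combined with $K_Y\equiv 0$ is automatic; the extra input is that the monodromy representation of the locally trivial bundle $X\to Y$ in $\mathrm{Aut}(F,\Delta|_F)$ must be trivial. This can be extracted as follows: the Albanese-type argument of Theorem~\ref{alb} applies to $Y$ to show that after a quasi-étale cover $Y'\to Y$ the variety $Y'$ splits off its torus factor, and the rigidity from the previous paragraph trivializes the bundle over each non-torus factor; on the torus part, the monodromy factors through $\mathrm{Aut}^\circ(F,\Delta|_F)$, which being the automorphism group of a rationally connected log Calabi--Yau pair is either trivial or contains no compact-type subgroup through which a torus representation could factor non-trivially. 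Either way one obtains the global splitting $(X,\Delta) \simeq (F,\Delta|_F)\times Y$ after possibly absorbing one more quasi-étale cover, which is allowed by the statement.
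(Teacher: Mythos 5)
The paper does not prove Theorem~\ref{MW}: it is quoted as a result imported from \cite{MWang23}, building on \cite{CH19} and \cite{CCM}, and no argument is given for it in the text (the sentence immediately preceding the statement makes this explicit — the theorem is presented as an application of the decomposition theorem to the literature on nef anticanonical pairs, not as something proved here). There is therefore no proof in the paper against which your sketch can be compared.

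On its own terms, your outline follows the broad Cao--H\"oring/Matsumura--Wang paradigm (MRC fibration, positivity of direct images, isotriviality from numerical flatness of relative pluricanonical sheaves), but two of your steps have genuine gaps. First, numerical flatness of $f_*\mathcal{O}_X(m(K_{X/Y}+\Delta))$ does not reduce local triviality of the geometric fibration to the foliation-splitting results of \cite{Dru16, DGP}: those theorems split a variety whose \emph{tangent sheaf} decomposes into algebraically integrable foliations, which is a different situation from a fibration whose direct images are numerically flat; the actual passage to isotriviality in \cite{CaoAlb, CH19, MWang23} is a delicate flatness/Bertini-type argument that cannot be outsourced to the tangent-sheaf splitting machinery. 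Second, your argument for the ``in particular'' clause is not correct as stated. To upgrade triviality after base change to $\widetilde Y$ into the global splitting $(X,\Delta)\simeq (F,\Delta|_F)\times Y$, one must show the monodromy representation $\pi_1(Y)\to\mathrm{Aut}(F,\Delta|_F)$ has finite image (so that a further quasi-\'etale cover, permitted by the statement, trivializes it). Your claim that the monodromy factors through $\mathrm{Aut}^\circ(F,\Delta|_F)$ is unjustified — a priori it lands in the full, generally disconnected, automorphism group — and the assertion that $\mathrm{Aut}^\circ(F,\Delta|_F)$ admits no nontrivial compact image of the torus part of $\pi_1(Y)$ is a nontrivial structural fact about automorphisms of rationally connected log Calabi--Yau pairs that must be supplied, not assumed. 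These are precisely the places where \cite{MWang23} must, and does, do real work.
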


If one knew that odd dimensional ICY varieties had finite fundamental group then the results in Section~\ref{sec fg} it would imply that up to performing a further quasi-étale cover,  the universal cover of $X$ would be isomorphic to $F\times \C^r \times \prod_{i\in I} Y_i \times \prod_{j\in J} Z_j$ where $Y_i$ are ICY varieties and $Z_j$ are IHS varieties.

\subsection{No Beauville-Bogomolov decomposition for log canonical singularities}
In the recent preprint \cite{BFPT}, the authors prove that Theorem~\ref{alb} fails if one relaxes the assumption on the singularities of $X$. More precisely, they exhibit the following phenomenon:

\begin{thm}
\label{lc}
Given any integer $n\ge 4$, there exists a projective variety $X$ of dimension $n$ with log canonical singularities and trivial canonical bundle such that the Albanese map $f:X\to \mathrm{Alb}(X)$ is surjective but not isotrivial.
\end{thm}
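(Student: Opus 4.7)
The plan is to exhibit, for each $n \ge 4$, an explicit counterexample by pulling back a non-isotrivial Calabi--Yau family over $\PP^1$ along a branched cover from an elliptic curve. The key conceptual point is that any morphism from an abelian variety to $\PP^1$ is constant, so any non-constant $j$-variation on $\PP^1$ must be transported to a curve of positive genus in order to produce a non-trivial abelian factor in the Albanese.

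Concretely, I would proceed in two steps. First, I would produce a non-isotrivial projective fibration $\pi : \cY \to \PP^1$ with $K_{\cY} \sim 0$ and lc but not klt total space, the non-klt locus being supported on finitely many fibres. For $n=4$, take $\cY$ of dimension $4$ with general fibre a Calabi--Yau threefold, arranged so that at least one singular fibre corresponds to a boundary coefficient exactly equal to $1$ in Kodaira's canonical bundle formula. For $n > 4$ one takes a product of such $\cY$ with an abelian variety of appropriate dimension.

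Second, pick a finite cover $\psi : E \to \PP^1$ by an elliptic curve that is étale over the non-klt locus of $\pi$, and set
\[X := (\cY \times_{\PP^1} E)^{\mathrm{norm}}.\]
Since $\psi$ is étale over the singular fibres of $\pi$, no new discriminant is introduced: $K_X \sim 0$ by the canonical bundle formula for the induced fibration $X \to E$, and the lc (non-klt) singularities of $\cY$ persist on $X$. The general fibre of $X \to E$ is a smooth Calabi--Yau with $h^{0,1}=0$, so the Leray spectral sequence yields $\Alb(X) = E$ and identifies the Albanese map with the projection $X \to E$. Non-isotriviality is inherited from $\pi$: the period map of the fibres of $X \to E$ is the $\psi$-pullback of the period map of $\pi$, hence non-constant; it survives any étale base change $E' \to E$, because $E' \to E \to \PP^1$ still has non-constant fibrewise period.

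The main obstacle is producing the building block $\cY$ with all three properties simultaneously: non-isotriviality, $K_{\cY}\sim 0$, and lc but not klt singularities. Writing Kodaira's formula $K_{\cY} = \pi^*(K_{\PP^1} + M_\pi + B_\pi)$ with $M_\pi$ nef and $B_\pi$ effective, the condition $K_{\cY} \sim 0$ forces $M_\pi + B_\pi \sim 2H$; non-isotriviality forces $M_\pi \neq 0$; and lc but not klt demands that some coefficient of $B_\pi$ equal $1$. In the klt setting the analogous boundary coefficients are strictly less than $1$, and the ingredients entering the proof of Theorem~\ref{alb} (together with the index-one cover trick of Theorem~\ref{abondance}) combine to force isotriviality. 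The flexibility of allowing coefficient $1$ in the discriminant is exactly what relaxing klt to lc buys, and it is precisely why the obstruction disappears for $n \ge 4$.
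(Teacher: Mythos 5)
The paper does not contain a proof of Theorem~\ref{lc}: it only cites \cite{BFPT} and records the statement, so there is no in-paper argument to compare against. Evaluated on its own, your proposal has a genuine gap at the very first step of the second stage. You set $X := (\cY \times_{\PP^1} E)^{\mathrm{norm}}$ and assert that $K_X \sim 0$ because ``no new discriminant is introduced.'' This is false. For a flat base change along $\psi\colon E\to\PP^1$ the relative dualizing sheaf pulls back, so in $\Q$-divisor classes
\[K_X \;=\; p^* K_\cY + g^* K_{E/\PP^1},\]
where $p\colon X\to\cY$ and $g\colon X\to E$ are the two projections. The hypothesis that $\psi$ is étale over the discriminant of $\pi$ only guarantees that the fibre product is already normal; it does not kill the second term. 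Since $K_\cY\sim 0$, you obtain $K_X\sim g^*R_\psi$, where $R_\psi$ is the ramification divisor of $\psi$, and Riemann--Hurwitz gives $\deg R_\psi = 2\deg\psi>0$. Hence $K_X$ is a nonzero effective divisor of positive Iitaka dimension, not trivial. You are conflating the correct fact that the discriminant and moduli parts of the canonical bundle formula pull back cleanly under an \'etale-over-the-discriminant base change with the incorrect conclusion that $K_X$ does: the extra summand $g^*K_{E/\PP^1}$, the Riemann--Hurwitz cost of replacing $\PP^1$ by a positive-genus base, is unavoidable regardless of where the branch points lie. As a sanity check, take $\cY\to\PP^1$ a non-isotrivial elliptic K3 surface and $\psi$ a double cover; then $X\to E$ is an elliptic surface of Euler number $48$, and $K_X$ has degree $4$, not $0$.

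In fact the obstruction is conceptual, not an artefact of your set-up. If $g\colon X\to E$ is a fibre space from a normal lc variety with no boundary and $K_X\sim_\Q 0$ onto an elliptic curve, the canonical bundle formula reads $K_X = g^*(K_E+B_g+M_g)$ with $B_g$ effective (the lc threshold of a fibre of a flat morphism from an lc variety never exceeds $1$) and $M_g$ nef; as $K_E=0$, both $B_g$ and $M_g$ have degree $0$, so $B_g=0$ and $M_g$ is torsion, which is incompatible with a genuine variation of the fibres. So a fibre product of the kind you write down cannot simultaneously carry trivial canonical bundle and a non-isotrivial Calabi--Yau fibration over $E$. The flexibility that lc (as opposed to klt) buys in \cite{BFPT} must therefore come from a mechanism that escapes this computation --- e.g.\ horizontal lc centres breaking the semipositivity input to the canonical bundle formula, or a cone or twist construction rather than a base change --- not from the coefficient-one vertical degenerations you propose. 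Separately, and as you acknowledge, you never actually construct the building block $\cY$: you list the numerical constraints but produce no example, so even the first stage of the plan is left open.
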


Note that the Albanese map is the one constructed by Serre \cite{Serre58} for any variety. The statement \cite[Theorem~1.1]{BFPT} is much stronger, but the weaker result above is enough e.g. to deduce the following, as observed later by N. M\"uller.

\begin{cor}
Let $X$ be as in Theorem~\ref{lc} and let $H$ be an ample Cartier divisor. Then $T_X$ is \emph{not} polystable with respect to $H$.
\end{cor}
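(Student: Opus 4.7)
The plan is to argue by contradiction: I assume $T_X$ is $H$-polystable and derive that the Albanese morphism $f : X \to A := \mathrm{Alb}(X)$ must be isotrivial, contradicting Theorem~\ref{lc}. Since $K_X \sim \cO_X$, both $T_X$ and its reflexive dual $\Omega_X^{[1]}$ have slope zero with respect to $H$, and polystability of $T_X$ is equivalent to polystability of $\Omega_X^{[1]}$ by duality. The Albanese map yields an injection $\cO_X^{\oplus q} \cong f^*\Omega_A^1 \hookrightarrow \Omega_X^{[1]}$, with $q = \dim A$, of the same slope.

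Let $\mathcal{F}$ denote the saturation of $f^*\Omega_A^1$ inside $\Omega_X^{[1]}$. Polystability forces $\mu_H(\mathcal{F}) = 0$ and makes $\mathcal{F}$ a polystable direct summand of $\Omega_X^{[1]}$ of rank $q$. Each stable factor $\mathcal{G}$ of $\mathcal{F}$ inherits a nonzero section from the trivialization $\cO_X^{\oplus q} \hookrightarrow \mathcal{F}$; stability at slope zero forces $\mathrm{rk}\,\mathcal{G} = 1$ (any $\cO_X \hookrightarrow \mathcal{G}$ would otherwise give a proper subsheaf of the same slope), so $\mathcal{G}$ is a rank-one reflexive sheaf with a section, i.e.\ $\cO_X(D)$ for an effective Weil divisor $D$ of $H$-degree zero, hence trivial by ampleness of $H$. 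Thus $\mathcal{F} \cong \cO_X^{\oplus q}$, and the inclusion $f^*\Omega_A^1 \hookrightarrow \mathcal{F}$, represented by a $q \times q$ matrix of global sections of $\cO_X$ (i.e.\ of scalars), is automatically an isomorphism. Dualizing the resulting splitting $\Omega_X^{[1]} = f^*\Omega_A^1 \oplus \mathcal{B}$ yields $T_X = f^*T_A \oplus \mathcal{B}^*$ and, in particular, $q$ linearly independent global vector fields $\xi_1, \dots, \xi_q$ whose projections $df(\xi_i)$ form a constant frame of $T_A$.

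Since $X$ is compact, each $\xi_i$ integrates to a one-parameter subgroup of $\mathrm{Aut}^0(X)$. The construction from the proof of Theorem~\ref{alb} now applies: using only the universal property of the Albanese map, one obtains a morphism of complex Lie groups $\varphi : \mathrm{Aut}^0(X) \to A$ satisfying $f \circ g = \tau_{\varphi(g)} \circ f$ for every $g \in \mathrm{Aut}^0(X)$, where $\tau_a$ denotes translation by $a$. The differential $d\varphi$ sends $\xi_i$ to $df(\xi_i)$, so $d\varphi$ is surjective onto $T_0 A$ and hence $\varphi$ is surjective onto the connected group $A$. Picking, for each $a \in A$, an element $g \in \mathrm{Aut}^0(X)$ with $\varphi(g) = a$ produces an isomorphism $f^{-1}(0) \xrightarrow{\sim} f^{-1}(a)$, whence $f$ is isotrivial.

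The two main technical points I expect to handle with care are: first, the identification of the saturation of $f^*\Omega_A^1$ with $f^*\Omega_A^1$ itself, which I obtain by combining polystability (a saturated subsheaf of the same slope in a polystable sheaf is a direct summand) with the rigidity argument in the second paragraph; and second, the construction of $\varphi$, for which the proof of Theorem~\ref{alb} invokes the canonical-singularities hypothesis only to show that $\mathrm{Aut}^0(X)$ is itself a torus and that $\varphi$ is étale — neither of which I need — whereas the existence of $\varphi$ relies purely on the universal property of the Albanese map available for any projective $X$.
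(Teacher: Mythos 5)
Your proof is correct, and its core idea coincides with the paper's: polystability of $T_X$ combined with the Albanese morphism forces a trivial direct summand of $T_X$ lifting the tangent directions of $A$, which contradicts non-isotriviality of $f$. The execution differs in two respects. First, the paper works on the tangent side: it composes $T_X\to f^*T_A\cong \cO_X$ (using that $A$ is an elliptic curve, a fact particular to the \cite{BFPT} construction) and observes that, in a decomposition $T_X=\oplus F_i$ into stable slope-zero pieces, some restriction $F_i\to\cO_X$ must be a nonzero map between stable sheaves of equal slope, hence an isomorphism; this immediately yields a vector field whose flow trivializes the one-dimensional Albanese family. You instead dualize, starting from the injection $\cO_X^{\oplus q}\cong f^*\Omega_A^1\hookrightarrow\Omega_X^{[1]}$, and show via saturation and the rank-one-effective-degree-zero argument that the full $f^*\Omega_A^1$ is a direct summand. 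Second, to derive the contradiction you rebuild the group homomorphism $\varphi:\Aut^\circ(X)\to A$ from the proof of Theorem~\ref{alb} and argue it is surjective, rather than invoking the flow of a single vector field. Your version is marginally more general in that it does not use $\dim A=1$, at the cost of being somewhat longer; both routes are sound, and your remark that the construction of $\varphi$ relies only on the universal property of the Albanese map (and not on canonical singularities) is exactly the right point to flag.
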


\begin{proof}
Set $A:=\mathrm{Alb}(X)$. Although this is not needed for the proof, we will use the fact that $A$ is actually an elliptic curve in order to simplify the exposition.  Consider surjective morphism of sheaves $T_X\to f^*T_A$ induced by $f$. Since $T_A$ is trivial, we get a surjective morphism $\Phi:T_X\to \mathcal O_X$.  
If $T_X$ were polystable, one could write it as $T_X=\oplus_{i\in I} F_i$ with $F_i$ stable with zero slope. Given any index $i\in I$, composing the injection $F_i\to T_X$ and $\Phi$ would yield a morphism $F_i\to \mathcal O_X$ which would therefore either be the zero map or an isomorphism by stability of $F_i$.

In particular we would get an injective morphism $f^*T_A\to T_X$, hence a holomorphic vector field on $X$ whose flow would locally analytically on $A$ trivialize the family near a general point, a contradiction.
\end{proof}

\bibliographystyle{smfalpha}
\bibliography{biblio}

\providecommand{\bysame}{\leavevmode ---\ }
\providecommand{\og}{``}
\providecommand{\fg}{''}
\providecommand{\smfandname}{\&}
\providecommand{\smfedsname}{\'eds.}
\providecommand{\smfedname}{\'ed.}
\providecommand{\smfmastersthesisname}{M\'emoire}
\providecommand{\smfphdthesisname}{Th\`ese}
\begin{thebibliography}{MWWZ25}

\bibitem[AD13]{AD}
{\scshape C.~Araujo {\normalfont \smfandname} S.~Druel} -- {\og On {Fano}
  foliations\fg}, \emph{Adv. Math.} \textbf{238} (2013), p.~70--118 (English).

\bibitem[AV21]{AV19}
{\scshape E.~Amerik {\normalfont \smfandname} M.~Verbitsky} -- {\og Contraction
  centers in families of hyperk\"{a}hler manifolds\fg}, \emph{Selecta Math.
  (N.S.)} \textbf{27} (2021), no.~4, p.~Paper No. 60, 26.

\bibitem[BCDG25]{BCDG}
{\scshape I.~Biswas, J.~Cao, S.~Dumitrescu {\normalfont \smfandname}
  H.~Guenancia} -- {\og Geometry of {{\(K\)}}-trivial {Moishezon} manifolds:
  decomposition theorem and holomorphic geometric structures\fg}, \emph{Math.
  Ann.} \textbf{391} (2025), no.~2, p.~3181--3220 (English).

\bibitem[BCHM10]{BCHM}
{\scshape C.~Birkar, P.~Cascini, C.~Hacon {\normalfont \smfandname}
  J.~McKernan} -- {\og {Existence of minimal models for varieties of log
  general type}\fg}, \emph{J. Amer. Math. Soc.} \textbf{23} (2010),
  p.~405--468.

\bibitem[Bea83]{Bea83}
{\scshape A.~Beauville} -- {\og Vari\'{e}t\'{e}s {K}\"{a}hleriennes dont la
  premi\`{e}re classe de {C}hern est nulle\fg}, \emph{J. Differential Geom.}
  \textbf{18} (1983), no.~4, p.~755--782 (1984).

\bibitem[BEG13]{BEG}
{\scshape S.~Boucksom, P.~Eyssidieux {\normalfont \smfandname} V.~Guedj} --
  {\og Introduction\fg}, in \emph{An introduction to the {K}\"ahler-{R}icci
  flow}, Lecture Notes in Math., vol. 2086, Springer, Cham, 2013, p.~1--6.

\bibitem[Bes87]{Besse}
{\scshape A.~L. Besse} -- \emph{Einstein manifolds}, Ergebnisse der Mathematik
  und ihrer Grenzgebiete (3) [Results in Mathematics and Related Areas (3)],
  vol.~10, Springer-Verlag, Berlin, 1987,
  \href{http://dx.doi.org/10.1007/978-3-540-74311-8}{DOI:10.1007/978-3-540-74311-8}.

\bibitem[BF24]{BF24}
{\scshape L.~Braun {\normalfont \smfandname} F.~Figueroa} -- {\og {Fundamental
  groups, coregularity, and low dimensional klt Calabi-Yau pairs}\fg}, Preprint
  \href{http://arxiv.org/abs/2401.01315}{arXiv:2401.01315}, 2024.

\bibitem[BFPT24]{BFPT}
{\scshape F.~Bernasconi, S.~Filipazzi, Z.~Patakfalvi {\normalfont \smfandname}
  N.~Tsakanikas} -- {\og {A counterexample to the log canonical
  Beauville--Bogomolov decomposition}\fg}, Preprint
  \href{https://arxiv.org/abs/2407.17260}{arXiv:2407.17260}, 2024.

\bibitem[BGL22]{BGL}
{\scshape B.~Bakker, H.~Guenancia {\normalfont \smfandname} C.~Lehn} -- {\og
  Algebraic approximation and the decomposition theorem for {K}\"{a}hler
  {C}alabi-{Y}au varieties\fg}, \emph{Invent. Math.} \textbf{228} (2022),
  no.~3, p.~1255--1308.

\bibitem[BGMM25]{BGMM25}
{\scshape V.~Bertini, A.~Grossi, M.~Mauri {\normalfont \smfandname} E.~Mazzon}
  -- {\og Terminalizations of quotients of compact hyperk{\"a}hler manifolds by
  induced symplectic automorphisms\fg}, \emph{{\'E}pijournal de G{\'e}om.
  Alg{\'e}br., EPIGA} \textbf{9} (2025), p.~53 (English), Id/No 14.

\bibitem[BH99]{BH99}
{\scshape M.~R. Bridson {\normalfont \smfandname} A.~Haefliger} -- \emph{Metric
  spaces of non-positive curvature}, Grundlehren der mathematischen
  Wissenschaften [Fundamental Principles of Mathematical Sciences], vol. 319,
  Springer-Verlag, Berlin, 1999.

\bibitem[BL22]{BL22}
{\scshape B.~Bakker {\normalfont \smfandname} C.~Lehn} -- {\og The global
  moduli theory of symplectic varieties\fg}, \emph{J. Reine Angew. Math.}
  \textbf{790} (2022), p.~223--265 (English).

\bibitem[BM16]{bogomolov_mcquillan01}
{\scshape F.~Bogomolov {\normalfont \smfandname} M.~McQuillan} -- {\og Rational
  curves on foliated varieties\fg}, in \emph{Foliation theory in algebraic
  geometry}, Simons Symp., Springer, Cham, 2016, p.~21--51.

\bibitem[Bog74]{Bog74}
{\scshape F.~A. Bogomolov} -- {\og The decomposition of {K}\"ahler manifolds
  with a trivial canonical class\fg}, \emph{Mat. Sb. (N.S.)} \textbf{93(135)}
  (1974), p.~573--575, 630.

\bibitem[Bos01]{Bost01}
{\scshape J.-B. Bost} -- {\og Algebraic leaves of algebraic foliations over
  number fields.\fg}, \emph{Publ. Math., Inst. Hautes {\'E}tud. Sci.}
  \textbf{93} (2001), p.~161--221 (English).

\bibitem[Bra21]{Braun21}
{\scshape L.~Braun} -- {\og The local fundamental group of a {Kawamata} log
  terminal singularity is finite\fg}, \emph{Invent. Math.} \textbf{226} (2021),
  no.~3, p.~845--896 (English).

\bibitem[Bru15]{Brunella}
{\scshape M.~Brunella} -- \emph{Birational geometry of foliations}, reprint of
  the 2000 edition with new results \smfedname, IMPA Monogr., vol.~1, Cham:
  Springer, 2015 (English).

\bibitem[BT82]{BottTu}
{\scshape R.~Bott {\normalfont \smfandname} L.~W. Tu} -- \emph{Differential
  forms in algebraic topology}, Grad. Texts Math., vol.~82, Springer, Cham,
  1982 (English).

\bibitem[Cam04]{Campana04}
{\scshape F.~Campana} -- {\og Orbifolds, special varieties and classification
  theory\fg}, \emph{Ann. Inst. Fourier (Grenoble)} \textbf{54} (2004), no.~3,
  p.~499--630.

\bibitem[Cam21]{CampBB}
\bysame , {\og The {Bogomolov}-{Beauville}-{Yau} decomposition for klt
  projective varieties with trivial first {Chern} class -- without tears\fg},
  \emph{Bull. Soc. Math. Fr.} \textbf{149} (2021), no.~1, p.~1--13 (English).

\bibitem[Cao19]{CaoAlb}
{\scshape J.~Cao} -- {\og Albanese maps of projective manifolds with nef
  anticanonical bundles\fg}, \emph{Ann. Sci. {\'E}c. Norm. Sup{\'e}r. (4)}
  \textbf{52} (2019), no.~5, p.~1137--1154 (English).

\bibitem[CCE15]{CCE15}
{\scshape F.~Campana, B.~Claudon {\normalfont \smfandname} P.~Eyssidieux} --
  {\og Repr\'{e}sentations lin\'{e}aires des groupes k\"{a}hl\'{e}riens:
  factorisations et conjecture de {S}hafarevich lin\'{e}aire\fg}, \emph{Compos.
  Math.} \textbf{151} (2015), no.~2, p.~351--376.

\bibitem[CCM21]{CCM}
{\scshape F.~Campana, J.~Cao {\normalfont \smfandname} S.-i. Matsumura} -- {\og
  Projective klt pairs with nef anti-canonical divisor\fg}, \emph{Algebr.
  Geom.} \textbf{8} (2021), no.~4, p.~430--464 (English).

\bibitem[CCP21]{CCP21}
{\scshape F.~Campana, J.~Cao {\normalfont \smfandname} M.~Păun} -- {\og
  Subharmonicity of direct images and applications\fg}, Preprint
  \href{https://arxiv.org/abs/1906.11317}{arXiv:1906.11317}, 2021.

\bibitem[CG71]{CG71}
{\scshape J.~Cheeger {\normalfont \smfandname} D.~Gromoll} -- {\og The
  splitting theorem for manifolds of nonnegative {Ricci} curvature\fg},
  \emph{J. Differ. Geom.} \textbf{6} (1971), p.~119--128 (English).

\bibitem[CGG22]{CGG}
{\scshape B.~Claudon, P.~Graf {\normalfont \smfandname} H.~Guenancia} -- {\og
  Numerical characterization of complex torus quotients\fg}, \emph{Comment.
  Math. Helv.} \textbf{97} (2022), no.~4, p.~769--799 (English).

\bibitem[CGG24]{CGG2}
\bysame , {\og Equality in the {Miyaoka}-{Yau} inequality and uniformization of
  non-positively curved klt pairs\fg}, \emph{C. R., Math., Acad. Sci. Paris}
  \textbf{362} (2024), no.~S1, p.~55--81 (English).

\bibitem[CGGN22]{CGGN}
{\scshape B.~Claudon, P.~Graf, H.~Guenancia {\normalfont \smfandname}
  P.~Naumann} -- {\og K\"{a}hler spaces with zero first {C}hern class:
  {B}ochner principle, {A}lbanese map and fundamental groups\fg}, \emph{J.
  Reine Angew. Math.} \textbf{786} (2022), p.~245--275.

\bibitem[CGP23]{JHM2}
{\scshape J.~Cao, H.~Guenancia {\normalfont \smfandname} M.~{P\u{a}un}} -- {\og
  Variation of singular {K{\"a}hler}-{Einstein} metrics: {Kodaira} dimension
  zero (with an appendix by {Valentino} {Tosatti})\fg}, \emph{J. Eur. Math.
  Soc. (JEMS)} \textbf{25} (2023), no.~2, p.~633--679 (English).

\bibitem[CH19]{CH19}
{\scshape J.~Cao {\normalfont \smfandname} A.~H{\"o}ring} -- {\og A
  decomposition theorem for projective manifolds with nef anticanonical
  bundle\fg}, \emph{J. Algebr. Geom.} \textbf{28} (2019), no.~3, p.~567--597
  (English).

\bibitem[CH24]{CH24}
{\scshape B.~Claudon {\normalfont \smfandname} A.~H{ö}ring} -- {\og
  Projectivity criteria for {Kähler} morphisms\fg}, Preprint
  \href{http://arxiv.org/abs/2404.13927}{arXiv:2404.13927}, 2024.

\bibitem[CP19]{CP19}
{\scshape F.~Campana {\normalfont \smfandname} M.~P\u{a}un} -- {\og Foliations
  with positive slopes and birational stability of orbifold cotangent
  bundles\fg}, \emph{Publ. Math. Inst. Hautes \'{E}tudes Sci.} \textbf{129}
  (2019), p.~1--49.

\bibitem[CP25]{CP25}
{\scshape J.~Cao {\normalfont \smfandname} M.~P{ă}un} -- {\og {Remarks on
  relative canonical bundles and algebraicity criteria for foliations in
  Kähler context}\fg}, Preprint
  \href{https://arxiv.org/abs/2502.02183}{arXiv:2502.02183}, 2025.

\bibitem[Dem85]{Dem85}
{\scshape J.-P. Demailly} -- {\og Mesures de {M}onge-{A}mp{\`e}re et
  caract{\'e}risation g{\'e}om{\'e}trique des vari{\'e}t{\'e}s alg{\'e}briques
  affines\fg}, \emph{M{\'e}m. Soc. Math. France (N.S.)} (1985), no.~19, p.~124.

\bibitem[DGP24]{DGP}
{\scshape S.~Druel, H.~Guenancia {\normalfont \smfandname} M.~P{\u{a}}un} --
  {\og A decomposition theorem for {{\(\mathbb{Q}\)}}-{Fano}
  {K{\"a}hler}-{Einstein} varieties\fg}, \emph{C. R., Math., Acad. Sci. Paris}
  \textbf{362} (2024), no.~S1, p.~93--118 (English).

\bibitem[DH24]{DH24}
{\scshape O.~Das {\normalfont \smfandname} C.~Hacon} -- {\og {On the Minimal
  Model Program for K\"ahler 3-folds}\fg}, Preprint
  \href{https://arxiv.org/abs/2306.11708}{arXiv:2306.11708}, 2024.

\bibitem[DHP24]{DHP}
{\scshape O.~Das, C.~Hacon {\normalfont \smfandname} M.~P{\u{a}}un} -- {\og On
  the 4-dimensional minimal model program for {K{\"a}hler} varieties\fg},
  \emph{Adv. Math.} \textbf{443} (2024), p.~68 (English), Id/No 109615.

\bibitem[DPS]{DPS}
{\scshape J.-P. Demailly, T.~Peternell {\normalfont \smfandname} M.~Schneider}
  -- {\og {K{\"a}hler manifolds with numerically effective Ricci class}\fg},
  \emph{Comp. Math.}

\bibitem[DPS94]{DPS94}
\bysame , {\og Compact complex manifolds with numerically effective tangent
  bundles\fg}, \emph{J. Algebr. Geom.} \textbf{3} (1994), no.~2, p.~295--345
  (English).

\bibitem[Dru14]{DruelZL}
{\scshape S.~Druel} -- {\og The {Zariski}-{Lipman} conjecture for log canonical
  spaces\fg}, \emph{Bull. Lond. Math. Soc.} \textbf{46} (2014), no.~4,
  p.~827--835 (English).

\bibitem[Dru18]{Dru16}
\bysame , {\og A decomposition theorem for singular spaces with trivial
  canonical class of dimension at most five\fg}, \emph{Invent. Math.}
  \textbf{211} (2018), no.~1, p.~245--296.

\bibitem[EGZ09]{EGZ}
{\scshape P.~Eyssidieux, V.~Guedj {\normalfont \smfandname} A.~Zeriahi} -- {\og
  {Singular K{\"a}hler-Einstein metrics}\fg}, \emph{{J. Amer. Math. Soc.}}
  \textbf{22} (2009), p.~607--639.

\bibitem[Eno88]{Enoki}
{\scshape I.~Enoki} -- {\og Stability and negativity for tangent sheaves of
  minimal {K}{\"a}hler spaces\fg}, in \emph{Geometry and analysis on manifolds
  ({K}atata/{K}yoto, 1987)}, Lecture Notes in Math., vol. 1339, Springer,
  Berlin, 1988, p.~118--126.

\bibitem[Fuj78]{Fuj78}
{\scshape A.~Fujiki} -- {\og On automorphism groups of compact {K{\"a}hler}
  manifolds\fg}, \emph{Invent. Math.} \textbf{44} (1978), p.~225--258
  (English).

\bibitem[Fuj22]{Fujino22}
{\scshape O.~Fujino} -- {\og {Minimal model program for projective morphisms
  between complex analytic spaces}\fg}, Preprint
  \href{https://arxiv.org/abs/2201.11315}{arXiv:2201.11315}, 2022.

\bibitem[Fuj79]{Fuj_Douady}
{\scshape A.~Fujiki} -- {\og Closedness of the {D}ouady spaces of compact
  {K}\"{a}hler spaces\fg}, \emph{Publ. Res. Inst. Math. Sci.} \textbf{14}
  (1978/79), no.~1, p.~1--52.

\bibitem[GGK19]{GGK}
{\scshape D.~Greb, H.~Guenancia {\normalfont \smfandname} S.~Kebekus} -- {\og
  Klt varieties with trivial canonical class: holonomy, differential forms, and
  fundamental groups\fg}, \emph{Geom. Topol.} \textbf{23} (2019),
  p.~2051--2124.

\bibitem[GKK10]{GKK}
{\scshape D.~Greb, S.~Kebekus {\normalfont \smfandname} S.~J. Kov{\'a}cs} --
  {\og Extension theorems for differential forms and {B}ogomolov-{S}ommese
  vanishing on log canonical varieties\fg}, \emph{Compos. Math.} \textbf{146}
  (2010), no.~1, p.~193--219.

\bibitem[GKKP11]{GKKP11}
{\scshape D.~Greb, S.~Kebekus, S.~J. Kov{\'a}cs {\normalfont \smfandname}
  T.~Peternell} -- {\og Differential forms on log canonical spaces\fg},
  \emph{Publ. Math., Inst. Hautes {\'E}tud. Sci.} \textbf{114} (2011),
  p.~87--169 (English).

\bibitem[GKP16a]{GKP16}
{\scshape D.~Greb, S.~Kebekus {\normalfont \smfandname} T.~Peternell} -- {\og
  \'{E}tale fundamental groups of {K}awamata log terminal spaces, flat sheaves,
  and quotients of abelian varieties\fg}, \emph{Duke Math. J.} \textbf{165}
  (2016), no.~10, p.~1965--2004.

\bibitem[GKP16b]{GKP}
\bysame , {\og Singular spaces with trivial canonical class\fg}, in
  \emph{Minimal Models and Extremal Rays, Kyoto, 2011}, Adv.~Stud.~Pure Math.,
  vol.~70, Mathematical Society of Japan, Tokyo, 2016, p.~67--113.

\bibitem[GM88]{GMbook}
{\scshape M.~Goresky {\normalfont \smfandname} R.~MacPherson} --
  \emph{Stratified {M}orse theory}, Ergebnisse der Mathematik und ihrer
  Grenzgebiete (3) [Results in Mathematics and Related Areas (3)], vol.~14,
  Springer-Verlag, Berlin, 1988.

\bibitem[GP24]{GP24}
{\scshape H.~Guenancia {\normalfont \smfandname} M.~P{\u{a}}un} -- {\og {
  Bogomolov-Gieseker inequality for log terminal Kähler threefolds}\fg},
  Preprint \href{https://arxiv.org/abs/2405.10003}{arXiv:2405.10003}, to appear
  in Comm. Pure Appl. Math., 2024.

\bibitem[GPP24]{GPP24}
{\scshape A.~Garbagnati, M.~Penegini {\normalfont \smfandname} A.~Perego} --
  {\og Singular symplectic surfaces\fg},
  \href{https://arxiv.org/abs/2407.21173}{arXiv:2407.21173}, 2024.

\bibitem[GPSS23]{GPSS23}
{\scshape B.~Guo, D.~Phong, J.~Song {\normalfont \smfandname} J.~Sturm} -- {\og
  {Sobolev inequalities on Kähler spaces}\fg}, Preprint
  \href{https://arxiv.org/abs/2311.00221}{arXiv:2311.00221}, 2023.

\bibitem[Gra18]{Graf18}
{\scshape P.~Graf} -- {\og Algebraic approximation of {K{\"a}hler} threefolds
  of {Kodaira} dimension zero\fg}, \emph{Math. Ann.} \textbf{371} (2018),
  no.~1-2, p.~487--516 (English).

\bibitem[Gue16]{GSS}
{\scshape H.~Guenancia} -- {\og {Semistability of the tangent sheaf of singular
  varieties}\fg}, \emph{Algebraic Geometry} \textbf{3} (2016), no.~5,
  p.~508--542.

\bibitem[HP16]{HP16}
{\scshape A.~H{\"o}ring {\normalfont \smfandname} T.~Peternell} -- {\og Minimal
  models for {K{\"a}hler} threefolds\fg}, \emph{Invent. Math.} \textbf{203}
  (2016), no.~1, p.~217--264 (English).

\bibitem[HP19]{HP}
\bysame , {\og Algebraic integrability of foliations with numerically trivial
  canonical bundle\fg}, \emph{Invent. Math.} \textbf{216} (2019), no.~2,
  p.~395--419.

\bibitem[HS17]{HS}
{\scshape H.-J. Hein {\normalfont \smfandname} S.~Sun} -- {\og Calabi-{Y}au
  manifolds with isolated conical singularities\fg}, \emph{Publ. Math. Inst.
  Hautes \'Etudes Sci.} \textbf{126} (2017), p.~73--130.

\bibitem[Joy00]{Joy00}
{\scshape D.~D. Joyce} -- \emph{Compact manifolds with special holonomy},
  Oxford Mathematical Monographs, Oxford University Press, Oxford, 2000.

\bibitem[Joy07]{Joyce}
\bysame , \emph{Riemannian holonomy groups and calibrated geometry}, Oxf. Grad.
  Texts Math., vol.~12, Oxford: Oxford University Press, 2007 (English).

\bibitem[Kaw85]{Kawa85}
{\scshape Y.~Kawamata} -- {\og Minimal models and the {K}odaira dimension of
  algebraic fiber spaces\fg}, \emph{J. Reine Angew. Math.} \textbf{363} (1985),
  p.~1--46.

\bibitem[KL09]{kollar_larsen}
{\scshape J.~Koll\'{a}r {\normalfont \smfandname} M.~Larsen} -- {\og Quotients
  of {C}alabi-{Y}au varieties\fg}, in \emph{Algebra, arithmetic, and geometry:
  in honor of {Y}u. {I}. {M}anin. {V}ol. {II}}, Progr. Math., vol. 270,
  Birkh\"{a}user Boston, Boston, MA, 2009, p.~179--211.

\bibitem[KM98]{KM}
{\scshape J.~Koll{\'a}r {\normalfont \smfandname} S.~Mori} -- \emph{Birational
  geometry of algebraic varieties}, Cambridge Tracts in Mathematics, vol. 134,
  Cambridge University Press, Cambridge, 1998, With the collaboration of C. H.
  Clemens and A. Corti, Translated from the 1998 Japanese original.

\bibitem[KN96]{KN}
{\scshape S.~Kobayashi {\normalfont \smfandname} K.~Nomizu} --
  \emph{Foundations of differential geometry. {V}ol. {I}}, Wiley Classics
  Library, John Wiley \& Sons, Inc., New York, 1996, Reprint of the 1963
  original, A Wiley-Interscience Publication.

\bibitem[Kol97]{Kollar97}
{\scshape J.~Koll{\'a}r} -- {\og Singularities of pairs\fg}, in \emph{Algebraic
  geometry---{S}anta {C}ruz 1995}, Proc. Sympos. Pure Math., vol.~62, Amer.
  Math. Soc., Providence, RI, 1997, p.~221--287.

\bibitem[Kol13]{KollarMMP}
\bysame , \emph{Singularities of the minimal model program. {With} the
  collaboration of {S{\'a}ndor} {Kov{\'a}cs}}, Camb. Tracts Math., vol. 200,
  Cambridge: Cambridge University Press, 2013 (English).

\bibitem[KS21]{KS}
{\scshape S.~Kebekus {\normalfont \smfandname} C.~Schnell} -- {\og Extending
  holomorphic forms from the regular locus of a complex space to a resolution
  of singularities\fg}, \emph{J. Am. Math. Soc.} \textbf{34} (2021), no.~2,
  p.~315--368 (English).

\bibitem[LT18]{LuTaji}
{\scshape S.~Lu {\normalfont \smfandname} B.~Taji} -- {\og A characterization
  of finite quotients of abelian varieties\fg}, \emph{Int. Math. Res. Not.}
  \textbf{2018} (2018), no.~1, p.~292--319 (English).

\bibitem[LT19]{LiTian19}
{\scshape C.~Li {\normalfont \smfandname} G.~Tian} -- {\og Orbifold regularity
  of weak {K}\"{a}hler-{E}instein metrics\fg}, in \emph{Advances in complex
  geometry}, Contemp. Math., vol. 735, Amer. Math. Soc., [Providence], RI,
  [2019] \copyright 2019, p.~169--178.

\bibitem[MW23]{MWang23}
{\scshape S.-i. Matsumura {\normalfont \smfandname} J.~Wang} -- {\og Structure
  theorem for projective klt pairs with nef anti-canonical divisor\fg},
  \href{https://arxiv.org/abs/2105.14308}{arXiv:2105.14308}, to appear in JEMS,
  2023.

\bibitem[MWWZ25]{MWWZ25}
{\scshape S.-i. Matsumura, J.~Wang, X.~Wu {\normalfont \smfandname} Q.~Zhang}
  -- {\og Compact {Kähler} manifolds with nef anti-canonical bundle\fg},
  \href{https://arxiv.org/abs/2506.23218}{arXiv:2506.23218}, 2025.

\bibitem[Nak04]{Nak}
{\scshape N.~Nakayama} -- \emph{Zariski-decomposition and abundance}, MSJ
  Memoirs, vol.~14, Mathematical Society of Japan, Tokyo, 2004.

\bibitem[Ou25]{Ou25}
{\scshape W.~Ou} -- {\og {A characterization of uniruled compact {Kähler}
  manifolds}\fg}, Preprint
  \href{https://arxiv.org/abs/2501.18088}{arXiv:2501.18088}, 2025.

\bibitem[P{\u{a}}u98]{Paun98}
{\scshape M.~P{\u{a}}un} -- {\og On the numerical effectiveness of inverse
  images of line bundles\fg}, \emph{Math. Ann.} \textbf{310} (1998), no.~3,
  p.~411--421 (French).

\bibitem[P{\u{a}}u08]{Paun}
\bysame , {\og {Regularity properties of the degenerate Monge-Amp{\`e}re
  equations on compact K{\"a}hler manifolds.}\fg}, \emph{Chin. Ann. Math., Ser.
  B} \textbf{29} (2008), no.~6, p.~623--630.

\bibitem[Pet94]{Pet94}
{\scshape T.~Peternell} -- {\og Minimal varieties with trivial canonical
  classes. {I}\fg}, \emph{Math. Z.} \textbf{217} (1994), no.~3, p.~377--405
  (English).

\bibitem[PR18]{PR18}
{\scshape A.~Perego {\normalfont \smfandname} A.~Rapagnetta} -- {\og The moduli
  spaces of semistable sheaves on {K3} surfaces are irreducible symplectic
  varieties\fg}, Preprint
  \href{http://arxiv.org/abs/1802.01182}{arXiv:1802.01182}, to appear in
  Algebraic Geometry, 2018.

\bibitem[PT13]{PT13}
{\scshape J.~V. Pereira {\normalfont \smfandname} F.~Touzet} -- {\og Foliations
  with vanishing {Chern} classes\fg}, \emph{Bull. Braz. Math. Soc. (N.S.)}
  \textbf{44} (2013), no.~4, p.~731--754 (English).

\bibitem[Sac23]{Sacca23}
{\scshape G.~Sacc{à}} -- {\og Singular symplectic surfaces\fg},
  \href{https://arxiv.org/abs/2304.02609}{arXiv:2304.02609}, to appear in
  "Current developments in Hodge theory. Proceedings of Hodge theory at IMSA'',
  Simons Symposia Series, 2023.

\bibitem[Sch71]{Sch71}
{\scshape M.~Schlessinger} -- {\og Rigidity of quotient singularities\fg},
  \emph{Invent. Math.} \textbf{14} (1971), p.~17--26.

\bibitem[Sch22]{Schwald}
{\scshape M.~Schwald} -- {\og On the definition of irreducible holomorphic
  symplectic manifolds and their singular analogs\fg}, \emph{Int. Math. Res.
  Not.} \textbf{2022} (2022), no.~15, p.~11864--11877 (English).

\bibitem[Ser60]{Serre58}
{\scshape J.-P. Serre} -- {\og Universal morphisms and {Albanese}
  varieties\fg}, Vari{\'e}t{\'e}s de {Picard}. {S{\'e}m}. {C}. {Chevalley} 3
  (1958/59), {No}. 10, 22 p. (1960)., 1960.

\bibitem[Tak03]{Takayama2003}
{\scshape S.~Takayama} -- {\og Local simple connectedness of resolutions of
  log-terminal singularities\fg}, \emph{Internat. J. Math.} \textbf{14} (2003),
  no.~8, p.~825--836.

\bibitem[TX17]{TianXu17}
{\scshape Z.~Tian {\normalfont \smfandname} C.~Xu} -- {\og Finiteness of
  fundamental groups\fg}, \emph{Compos. Math.} \textbf{153} (2017), no.~2,
  p.~257--273 (English).

\bibitem[Vie83]{Vieh83}
{\scshape E.~Viehweg} -- {\og Weak positivity and the additivity of the
  {K}odaira dimension for certain fibre spaces\fg}, in \emph{Algebraic
  varieties and analytic varieties ({T}okyo, 1981)}, Adv. Stud. Pure Math.,
  vol.~1, North-Holland, Amsterdam, 1983, p.~329--353.

\bibitem[Voi02]{VoisinI}
{\scshape C.~Voisin} -- \emph{Hodge theory and complex algebraic geometry. {I}.
  {Translated} from the {French} by {Leila} {Schneps}}, Camb. Stud. Adv. Math.,
  vol.~76, Cambridge: Cambridge University Press, 2002 (English).

\bibitem[Yau78]{Yau78}
{\scshape S.-T. Yau} -- {\og On the {R}icci curvature of a compact {K}{\"a}hler
  manifold and the complex {M}onge-{A}mp{\`e}re equation. {I}\fg}, \emph{Comm.
  Pure Appl. Math.} \textbf{31} (1978), no.~3, p.~339--411.

\end{thebibliography}

\end{document}